\documentclass[12pt]{amsart}
\usepackage{cases}
\usepackage{amsmath}
\usepackage{mathrsfs}
\usepackage{amssymb}
\usepackage{amsfonts}
\usepackage{graphicx}
\usepackage{stmaryrd}
\usepackage{abstract}
\usepackage{pdfsync}
\usepackage{hyperref}
\usepackage{pgfplots}
\usepackage{tabularx}
\usepackage{geometry}
\newgeometry{left=2cm,right=2cm,top=2cm,bottom=1.8cm}

\newtheorem{theorem}{Theorem}[section]

\newtheorem{lemma}[theorem]{Lemma}

\theoremstyle{definition}
\newtheorem{definition}[theorem]{Definition}
\newtheorem{asmp}[theorem]{Assumption}

\theoremstyle{remark}
\newtheorem{remark}[theorem]{Remark}
\newtheorem*{remark*}{Remark}

\numberwithin{equation}{section}
\numberwithin{case}{section}
\numberwithin{subcase}{case}
\numberwithin{subsubcase}{subcase}

\newcommand{\bm}[1]{{\mbox{\boldmath$#1$}}}
\def\MR#1{\href{http://www.ams.org/mathscinet-getitem?mr=#1}{MR#1}}

\begin{document}
\title[Smith Normal Form Distribution of a Random Integer Matrix]{The Smith Normal Form Distribution of a Random Integer Matrix}

\date{June 11, 2015}

\author{Yinghui Wang} \email{yinghui@math.columbia.edu, yinghui@alum.mit.edu}
\address{Department of Mathematics, Columbia University, New York, New York 10027}

\author{Richard P. Stanley}
\email{rstan@math.mit.edu}
\address{Department of Mathematics, Massachusetts Institute of Technology, Cambridge, Massachusetts 02139}

\thanks{{\it Acknowledgements:} The authors are grateful to Professor Bjorn Poonen for advice on the literature on the subject of this paper. The second author was partially supported by NSF grant DMS-1068625.}

\maketitle

\begin{abstract}
We show that the density $\mu$ of the Smith normal form (SNF) of a
random integer matrix exists and equals a product of densities
$\mu_{p^s}$ of SNF over $\mathbb{Z}/p^s\mathbb{Z}$ with $p$ a prime
and $s$ some positive integer.   
Our approach is to connect the SNF of a matrix with the greatest
common divisors (gcds) of certain polynomials of matrix entries, and
develop the theory of multi-gcd distribution of polynomial values at a
random integer vector. 
We also derive a formula for $\mu_{p^s}$ and compute the density $\mu$
for several interesting types of sets.  
Finally, we determine the maximum and minimum of $\mu_{p^s}$ and
establish its monotonicity properties and limiting behaviors. 

\end{abstract}

\section{Introduction}

Let $M$ be a nonzero $n\times m$ matrix over a commutative ring  $R$
(with identity), and $r$ be the rank of $M$.  
If there exist invertible $n \times n$ and $m \times m$ matrices $P$ and $Q$ such that the product $PMQ$ is a diagonal matrix with diagonal entries $d_1,d_2,\dots, d_r,0,0,\dots,0$ satisfying that
$d_i \mid d_{i+1} $ for all $1 \le i \le  r-1$, 
then $PMQ$ is the {\it Smith normal form (SNF)} of $M$.
In general, the SNF does not exist. It does exist when $R$ is a
\emph{principal ideal ring}, i.e., a ring (not necessarily an integral
domain) for which every ideal is principal.  
This class of rings includes the integers $\mathbb{Z}$ and their quotients $\mathbb{Z}/q\mathbb{Z}$, which are the rings of interest to us here.  
In fact, for the rings $\mathbb{Z}/q\mathbb{Z}$ we will be particularly concerned with the case $q=p^s$, a prime power. 
For principal ideal rings, the diagonal entries are uniquely determined (up to multiplication by a unit) by $g_{i-1}d_i = g_i$ $(1 \le i \le  r)$, 
where $g_0=1$ and $g_i$ is the greatest common divisor (gcd) of all $i\times i$ minors of $M$.
We have the following correspondence between the SNF and the cokernel of $M$:
${\mathrm{coker}}\,M \simeq  R/d_1R\oplus R/d_2R\oplus \cdots \oplus R/d_rR \oplus R^{n-r}$.

There has been a huge amount of research on eigenvalues of random matrices over a field (see, e.g., \cite{ABD}, \cite{AGZ},  \cite{F}, \cite{M}). 
Less attention has been paid to the SNF of a random matrix over a principal ideal ring (or more general rings for which SNF always exists).
Some basic results in this area are known, 
but they appear in papers not focused on SNF per se. 
We develop the theory in a systematic way, collecting previous work in this area, sometimes with simplified proofs, and providing some new results. 

\smallskip
We shall define the {\it density} $\mu$ of SNF of a random $n\times m$ integer matrix as the limit (if exists) as $k\to \infty$ of $\mu^{(k)}$, the density of SNF of a random $n\times m$ matrix with entries independent and uniformly distributed over $\{-k,-k+1,\dots,k\}$  
(see Definition \ref{def:mu} below for a precise definition).  

As a motivating example, 
the probability that $d_1=1$ for a random $n\times m$ integer matrix is the probability that the $nm$ matrix entries are relatively prime, or equivalently, that $nm$ random integers are relatively prime, and thus equals $1/\zeta(nm)$, where $\zeta(\cdot)$ is the Riemann zeta function. 

\smallskip
If we regard the minors of an $n\times m$ matrix as polynomials of the $nm$ matrix entries with integer coefficients, then the SNF of a matrix is uniquely determined by the gcds of the values of these polynomials (recall the definition of SNF from the beginning). 
This inspires us to study the theory of multi-gcd distribution of polynomial values.

Given a collection of relatively prime polynomials in $\mathbb{Z}[x_1,x_2,\dots,x_d]$, 
let $g(x)$ be the gcd of the values of these polynomials at $x=(x_1,x_2,\dots,x_d)$. 
We shall define the {\it density} $\lambda$ of $g(x)$ of a random $d$-dimensional integer vector $x$ as the limit (if exists) as $k\to \infty$ of $\lambda^{(k)}$, the density of $g(x)$ with $x$ uniformly distributed over $\{-k,-k+1,\dots,k\}^d$ (see Definition \ref{def:lambda} for a precise definition). 

In the spirit of previous work in number theory such as \cite{E}, \cite{P}, \cite{PS} and the Cohen-Lenstra heuristics (\cite{CL82}, \cite{CL83}), 
one might conjecture that $\lambda$ exists and equals the product of density $\lambda_{p}$ of $g(x)$ over $(\mathbb{Z}/p\mathbb{Z})^d$ over all primes $p$. 
In fact, we will prove this conjecture with the more general density  $\lambda_{p^s}$ of $g(x)$ over $\mathbb{Z}/p^s\mathbb{Z}$ for sets of form \eqref{eq:Z} (see Theorem \ref{thm:lambda(Z)}), with the aid of a result in number theory \cite[Lemma 21]{PS}. 
Note that the special case that $s=0$ or $1$  follows from \cite[Theorem 2.3]{E} directly. 
In particular, this result applies to the probability that $g(x)=1$\,, in other words, that the polynomial values are relatively prime.
Furthermore, all these results hold for the multi-gcd distribution of polynomial values, namely, when $g(x)$ is a vector whose components are the gcds of the values of given collections of polynomials at $x$.

\smallskip
Then we apply this theory to the SNF distribution of a random integer
matrix to show that the density $\mu$ (of SNF of a random $n\times m$
integer matrix) equals a product of some densities $\mu_{p^s}$ of SNF
over $\mathbb{Z}/p^s\mathbb{Z}$ for sets of form \eqref{eq:S} (Theorem
\ref{thm:mu(S)}).  We also derive a formula for $\mu_{p^s}$ (Theorem
\ref{thm:p^s}), which allows us to compute $\mu_{p^s}$ and hence $\mu$
explicitly (Theorem \ref{thm:mu}). Some special cases of this formula
coincide with \cite[Exercise 1.192(b)]{S} and \cite[pp.\,233,
  236]{FW}.  Another paper related to our work is \cite{W}.

On the strength of these results, we determine the value of $\mu$ for
some interesting types of sets, specifically, matrices with first few
diagonal entries given, matrices with diagonal entries all equal to
$1$, and square matrices with at most $\ell\,(=1,2,\dots,n)$ diagonal
entries not equal to $1$, i.e., whose corresponding cokernel has at
most $\ell$ generators; further, for the last set we establish the
asymptotics of $\mu$ as $\ell\to \infty$\,.  In the case of $\ell=1$
(which is equivalent to the matrix having a cyclic cokernel),
our results echo those of Ekedahl \cite[Section 3]{E} 
via a different approach.  We also show that the probability that a
random integer matrix is full rank is $1$, and that $\mu$ of a finite
set is $0$.

Additionally, we find the maximum and minimum of $\mu_{p^s}(D)$ over
all diagonal matrices $D$\,; whereas regarding it as a function of
$p,s,m,n$ and $D$, we find its monotonicity properties and limiting
behaviors. 

\smallskip
The remainder of this paper is organized as follows. 
Section \ref{sec:gcd} develops the theory of multi-gcd distribution of polynomial values.
Section \ref{sec:SNF} applies this theory to the SNF distribution and derives a formula for $\mu_{p^s}$.
Section \ref{sec:app} computes the density $\mu$ for several types of sets. Finally, Section \ref{sec:properties} determines the maximum and minimum of $\mu_{p^s}$ and discusses its monotonicity properties and limiting behaviors.

\smallskip
We shall assume that throughout this paper, $p$ represents a prime, $p_j$ is the $j$-th smallest prime, and $\prod_p$ means a product over all primes $p$.   

\section{Multi-gcd Distribution of Polynomial Values}\label{sec:gcd}
Suppose that $d$ and $h$ are positive integers and $F_1,F_2,\dots,F_h\in \mathbb{Z}[x_1,x_2,\dots,x_d]$ are nonzero polynomials. 
Let $$g(x):=\gcd(F_1(x),F_2(x),\cdots,F_h(x))\,,\quad x\in \mathbb{Z}^d$$
be the gcd of the values $F_1(x),F_2(x),\dots,F_h(x)$, and $g(x)=0$ if $F_j(x)=0$ for all $1\le j\le h$.

We shall define the {\it density of $g(x)$ of a random $d$-dimensional integer vector} $x$ as the limit (if exists) of the density of $g(x)$ with $x$ uniformly distributed over $\{-k,-k+1,\dots,k\}^d:=\mathbb{Z}_{(k)}^d$ as $k\to \infty$\,, precisely as follows.

\begin{definition}\label{def:lambda}
(i) For $\mathcal{Z}\subseteq \mathbb{Z}$\,, 
we denote by $\lambda^{(k)}(\mathcal{Z})$ the probability that $g(x)\in \mathcal{Z}$ with $x$ uniformly distributed over $\mathbb{Z}_{(k)}^d$. 
If \,$\lim_{k\to \infty} \lambda^{(k)}(\mathcal{Z})=\lambda(\mathcal{Z})$ exists, then we say that  the {\em probability that $g(x)\in \mathcal{Z}$ with $x$ a random $d$-dimensional integer vector} is $\lambda(\mathcal{Z})$. If this is the case, then $\lambda(\mathcal{Z})\in [0,1]$ since  $\lambda^{(k)}(\mathcal{Z})\in [0,1]$ for all $k$.

\smallskip
(ii) We define similarly the {\rm gcd distribution over the ring of integers mod $p^s$}: for prime $p$ and positive integer $s$, we denote by $\lambda^{(k)}_{p^s}(\mathcal{Z})$ the probability that $g(x)\in \mathcal{Z}\, (\mathrm{mod}\ p^s)$ (up to multiplication by a unit) with $x$ uniformly distributed over $\mathbb{Z}_{(k)}^d$, and by $\lambda_{p^s}(\mathcal{Z})$ the probability that $g(x)\in \mathcal{Z}\, (\mathrm{mod}\ p^s)$ (up to multiplication by a unit) with $x$ uniformly distributed over $(\mathbb{Z}/p^s \mathbb{Z})^d$. 

\smallskip
More generally, for a finite set $\mathcal{P}$ of prime and positive integer pairs $(p,s)$ (with $p$ a prime  and  $s$ a positive integer), we denote  $$P_{\mathcal{P}}:=\prod_{(p,s)\in\mathcal{P}} p^s$$ 
and by $\lambda^{(k)}_{P_{\mathcal{P}}}(\mathcal{Z})$ the probability that $g(x)\in \mathcal{Z}\, (\mathrm{mod}\ P_{\mathcal{P}})$ (up to multiplication by a unit) with $x$ uniformly distributed over $\mathbb{Z}_{(k)}^d$, and by $\lambda_{P_{\mathcal{P}}}(\mathcal{Z})$ the probability that $g(x)\in \mathcal{Z}\, (\mathrm{mod}\ P_{\mathcal{P}})$ (up to multiplication by a unit) with $x$ uniformly distributed over $(\mathbb{Z}/P_{\mathcal{P}}  \mathbb{Z})^d$.  
Note that $\lambda_{P_{\mathcal{P}}}(\mathcal{Z})$ is the number of solutions to $g(x)\in \mathcal{Z}\, (\mathrm{mod}\ P_{\mathcal{P}})$ (up to multiplication by a unit) divided by $P_{\mathcal{P}}^d$. 
The situation discussed in the previous paragraph is the special case that $\mathcal{P}$ consists of only one element $(p,s)$ and $P_\mathcal{P}=p^s$.

\smallskip
(iii) The above definitions also extend to the {\rm distribution of multi-gcds}. Suppose that\,  $\mathcal{U}=\{U_i\}_{i=1}^{w}$ is a collection of $w$ nonempty subsets $U_{i}$ of $  \{F_1,F_2,\dots,F_h\}$.
Let 
\begin{equation}\label{eq:gi}
g_i(x):=\gcd\,(F(x):F\in U_i)\,,\quad x\in \mathbb{Z}^d
\end{equation}
and
$$g(x):=(g_1,g_2,\dots,g_w)(x)\in \mathbb{Z}^w,$$
then we adopt the above definitions of functions $\lambda^{(k)}$, $\lambda$, 
$\lambda^{(k)}_{P_{\mathcal{P}}}$ and $\lambda_{P_{\mathcal{P}}}$ for  $\mathcal{Z}\subseteq \mathbb{Z}^d$ with only one slight modification: replace ``up to multiplication by a unit" with ``up to multiplication of the {\rm components of $g$} by units".
\end{definition}

For convenience, we shall always assume that the notion $g(x)\in \mathcal{Z}\, (\mathrm{mod}\ P_{\mathcal{P}})$ implies the {\it equivalence of multiplication of its components by units}  and that the random vector $x$ is {\it uniformly distributed} on its range (if known, e.g., $\mathbb{Z}_{(k)}^d$ or  $(\mathbb{Z}/P_{\mathcal{P}} \mathbb{Z})^d$).

\begin{remark} 
The density $\lambda_p(\cdot)$ defined above in Definition \ref{def:lambda}\,(ii) is consistent with the normalized {\it Haar measure} on $\mathbb{Z}_p^d$\,, as in \cite{PS}.
\end{remark} 

In this section, we establish the properties of $ \lambda_{P_{\mathcal{P}}}$ and $\lambda$,  
the existence of $\lambda$, and
a connection between $\lambda$ and the $\lambda_{p^s}$'s. Then we apply these results to determine the probability that the polynomial values are relatively prime.

\subsection{\texorpdfstring{Multi-gcd Distribution over $\mathbb{Z}/P_{\mathcal{P}} \mathbb{Z}\,$}{Multi-gcd Distribution over Z/(P{\_}P)Z}}
\label{sec:lambda_{P_P}}
$ $

We show that the density $\lambda^{(k)}_{P_{\mathcal{P}}}(\cdot)$ over $\mathbb{Z}_{(k)}^d$ (defined above in Definition \ref{def:lambda}) converges to the density $\lambda_{P_{\mathcal{P}}}(\cdot)$ over $\mathbb{Z}/P_{\mathcal{P}} \mathbb{Z}$ as $k\to \infty$\,, and that  $\lambda_{P_{\mathcal{P}}}(\cdot)$ equals $\prod_{(p,s)\in \mathcal{P}} \lambda_{p^s}(\cdot)$.

\begin{theorem}\label{thm:lambda_{P_P}}
For any $\mathcal{Z}\subseteq  \mathbb{Z}^w$, we have
\begin{equation}\label{eq:lambda({z})}
\lambda_{P_{\mathcal{P}}}(\mathcal{Z})
=\sum_{z\in \mathcal{Z} \,(\mathrm{mod}\, P_{\mathcal{P}})} \lambda_{P_{\mathcal{P}}}(\{z\}) 
\end{equation}
and
\begin{equation}\label{eq:lambda_{P_P}(Z)}
\lim_{k\to \infty} \lambda^{(k)}_{P_{\mathcal{P}}}(\mathcal{Z})
=\lambda_{P_{\mathcal{P}}}(\mathcal{Z})
=\prod_{(p,s)\in \mathcal{P}} \lambda_{p^s}(\mathcal{Z}) \,.
\end{equation}
\end{theorem}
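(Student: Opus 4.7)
The plan is to establish the three claims in turn. Equation \eqref{eq:lambda({z})} follows directly from the definition: by the note immediately after Definition \ref{def:lambda}, $\lambda_{P_{\mathcal{P}}}(\mathcal{Z})$ is the fraction of $x \in (\mathbb{Z}/P_{\mathcal{P}}\mathbb{Z})^d$ whose value $g(x)$ lies (componentwise, up to units) in the image of $\mathcal{Z}$ modulo $P_{\mathcal{P}}$. Partitioning these $x$ according to which unit-equivalence class $z$ of $\mathcal{Z} \pmod{P_{\mathcal{P}}}$ the value $g(x)$ belongs to, and using finite additivity of the counting measure, gives the sum.

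For the convergence $\lim_{k\to\infty} \lambda^{(k)}_{P_{\mathcal{P}}}(\mathcal{Z}) = \lambda_{P_{\mathcal{P}}}(\mathcal{Z})$, the driving observation is that each $F_j \in \mathbb{Z}[x_1,\ldots,x_d]$ has integer coefficients, so $F_j(x) \bmod P_{\mathcal{P}}$ depends only on $x \bmod P_{\mathcal{P}}$. Hence the ideal $(F_1(x),\dots,F_h(x))$ in $\mathbb{Z}/P_{\mathcal{P}}\mathbb{Z}$, and therefore the unit-orbit of $g(x) \bmod P_{\mathcal{P}}$, is a function of $x \bmod P_{\mathcal{P}}$ alone. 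Grouping the $(2k+1)^d$ points of $\mathbb{Z}_{(k)}^d$ by their residues in $(\mathbb{Z}/P_{\mathcal{P}}\mathbb{Z})^d$, each residue class is represented $(2k+1)^d/P_{\mathcal{P}}^d + O(k^{d-1})$ times, so the empirical probability $\lambda^{(k)}_{P_{\mathcal{P}}}(\mathcal{Z})$ converges to the uniform-measure probability $\lambda_{P_{\mathcal{P}}}(\mathcal{Z})$.

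For the product formula, I would appeal to the Chinese Remainder Theorem. The isomorphism $\mathbb{Z}/P_{\mathcal{P}}\mathbb{Z} \cong \prod_{(p,s)\in\mathcal{P}} \mathbb{Z}/p^s\mathbb{Z}$ sends the uniform distribution on $(\mathbb{Z}/P_{\mathcal{P}}\mathbb{Z})^d$ to the product of the uniform distributions on each $(\mathbb{Z}/p^s\mathbb{Z})^d$, and identifies a principal ideal $(c)\subseteq \mathbb{Z}/P_{\mathcal{P}}\mathbb{Z}$ componentwise with the tuple $((c \bmod p^s))_{(p,s)}$; the unit groups factor the same way. Using \eqref{eq:lambda({z})} I would first reduce to the singleton case $\mathcal{Z} = \{z\}$, for which $\lambda_{P_{\mathcal{P}}}(\{z\}) = \prod_{(p,s)} \lambda_{p^s}(\{z\})$ follows cleanly from the independence of the per-prime components of $x$. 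The hard part, which I expect to be the main obstacle, is the reassembly: verifying that summing these products over representatives $z$ of $\mathcal{Z} \pmod{P_{\mathcal{P}}}$ recovers $\prod_{(p,s)} \lambda_{p^s}(\mathcal{Z})$ requires careful bookkeeping of how the membership condition factors through CRT, since for arbitrary $\mathcal{Z}$ the image of $\mathcal{Z}$ modulo $P_{\mathcal{P}}$ need not literally be a Cartesian product of its images modulo each $p^s$.
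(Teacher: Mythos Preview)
Your treatment of \eqref{eq:lambda({z})} and of the convergence $\lambda^{(k)}_{P_{\mathcal{P}}}(\mathcal{Z})\to\lambda_{P_{\mathcal{P}}}(\mathcal{Z})$ is essentially the paper's: the paper makes the residue-class count explicit by writing $qP_{\mathcal{P}}\le 2k+1<(q+1)P_{\mathcal{P}}$ and sandwiching the number of good $x\in\mathbb{Z}_{(k)}^d$ between $q^d N_{P_{\mathcal{P}}}(\mathcal{Z})$ and $(q+1)^d N_{P_{\mathcal{P}}}(\mathcal{Z})$, which is exactly your $O(k^{d-1})$ argument made quantitative.

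For the product formula the paper does \emph{not} go through singletons. It applies CRT directly at the level of the count: with $N_{P_{\mathcal{P}}}(\mathcal{Z})$ the number of $x\in(\mathbb{Z}/P_{\mathcal{P}}\mathbb{Z})^d$ for which $g(x)\in\mathcal{Z}\pmod{P_{\mathcal{P}}}$, it asserts $N_{P_{\mathcal{P}}}(\mathcal{Z})=\prod_{(p,s)}N_{p^s}(\mathcal{Z})$ and divides by $P_{\mathcal{P}}^d$. This bypasses your reassembly step altogether, so the ``hard part'' you anticipate does not appear in the paper's argument.

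Your worry is, however, well placed. The assertion $N_{P_{\mathcal{P}}}(\mathcal{Z})=\prod_{(p,s)}N_{p^s}(\mathcal{Z})$ amounts to saying that $g(x)\in\mathcal{Z}\pmod{P_{\mathcal{P}}}$ is equivalent to the conjunction of $g(x)\in\mathcal{Z}\pmod{p^s}$ over $(p,s)\in\mathcal{P}$, and for a completely arbitrary $\mathcal{Z}$ this fails (e.g.\ $w=1$, $\mathcal{Z}=\{2,3\}$, $P_{\mathcal{P}}=6$: the value $g(x)=6$ lies in $\mathcal{Z}$ modulo $2$ and modulo $3$ but not modulo $6$). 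The paper does not flag this; in practice the result is invoked only for sets $\mathcal{Z}$ of the form \eqref{eq:Z}, where membership in $\mathcal{Z}$ modulo $P_{\mathcal{P}}$ genuinely factors across the prime powers, and there the CRT step is immediate. So your singleton-then-reassemble route surfaces a subtlety that the paper's direct route glosses over, but neither route closes the gap for truly general $\mathcal{Z}$; both are fine once one restricts to $\mathcal{Z}$ whose residue image is a product set.
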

\begin{proof}
(1) The first equality \eqref{eq:lambda({z})} follows directly from Definition \ref{def:lambda}.

\smallskip
(2) For the second equality of \eqref{eq:lambda_{P_P}(Z)}, we let $N_{P_{\mathcal{P}}}(\mathcal{Z})$ be the number of $x\in (\mathbb{Z}/P_{\mathcal{P}}\mathbb{Z})^d$ for which $g(x)\in \mathcal{Z}$ (mod  $P_{\mathcal{P}}$).
The Chinese remainder theorem along with Definition \ref{def:lambda} then gives
\begin{equation*}
P_{\mathcal{P}} ^{d}\, \lambda_{P_{\mathcal{P}}}(\mathcal{Z})
=N_{P_{\mathcal{P}}}(\mathcal{Z})=
\prod_{(p,s)\in \mathcal{P}} N_{p^{s}}(\mathcal{Z})
=\prod_{(p,s)\in \mathcal{P}} p^{sd}\lambda_{p^{s}}(\mathcal{Z})
=P_{\mathcal{P}} ^{d} \prod_{(p,s)\in \mathcal{P}}\lambda_{p^{s}}(\mathcal{Z})\,.
\end{equation*}
Dividing both sides by $P_{\mathcal{P}} ^{d}$  leads to the desired equality.

\smallskip
(3) For the first equality of \eqref{eq:lambda_{P_P}(Z)}, we first observe that if $p\,|\,2k+1$, then $\lambda^{(k)}_{P_{\mathcal{P}}}(\mathcal{Z})
=\lambda_{P_{\mathcal{P}}}(\mathcal{Z})$ by definition. If $p\nmid 2k+1$, then we proceed by approximating  $2k+1$ by a multiple of $P_{\mathcal{P}}$ and estimating $\lambda^{(k)}_{P_{\mathcal{P}}}(\mathcal{Z})$ using $\lambda_{P_{\mathcal{P}}}(\mathcal{Z})$. 

\smallskip
Let $k\in \mathbb{Z}$ such that $K:=2k+1\ge P_{\mathcal{P}}$,
then there exists $q\in \mathbb{Z}_+$ such that
\begin{equation}\label{eq:q}
q\cdot P_{\mathcal{P}} \le  K < (q+1)\cdot P_{\mathcal{P}}\,. 
\end{equation}  
It follows that  for any integer $y$, there are either $q$ or $q+1$ numbers among $\mathbb{Z}_{(k)}$ that equal $y$ mod $P_{\mathcal{P}}$. 
Thus the number of $x\in \mathbb{Z}_{(k)}^d$ for which  for which $g(x)\in \mathcal{Z}$ (mod  $P_{\mathcal{P}}$) is between 
$q^{d} N'$ and $(q+1)^{d} N'$, where 
$N':=N_{P_{\mathcal{P}}}(\mathcal{Z})$, therefore
\begin{equation*}
\lambda^{(k)}_{P_{\mathcal{P}}}(\mathcal{Z})\in \left[\frac{q^{d} N'}{ K^{d} }\,, \frac{(q+1)^{d} N'}{ K^{d} }\right]:=J_k\,.
\end{equation*}
Thanks to \eqref{eq:q}, we have
\begin{equation*}
J_k \subseteq \left[\frac{q^{d} N'}{ \left[ (q+1)P_{\mathcal{P}}\right] ^{d} }\,, \frac{(q+1)^{d} N'}{ \left( qP_{\mathcal{P}}\right) ^{d} }\right] 
=\left[ \left(\frac{q}{q+1}\right) ^{d} \frac{N'}{P_{\mathcal{P}}^{d} }\,, \left(\frac{q+1}{q}\right) ^{d} \frac{N'}{P_{\mathcal{P}}^{d} } \right] ,
\end{equation*}
whose left and right endpoints both converge to $N'/P_{\mathcal{P}}^{d}$ as $q\to \infty$\,. Hence  
\begin{equation*}
\lambda^{(k)}_{P_{\mathcal{P}}}(\mathcal{Z})\to N'/P_{\mathcal{P}}^{d}
=\lambda_{P_{\mathcal{P}}}(\mathcal{Z})\,,
\end{equation*} 
as $q\to \infty$\,, or equivalently, as $k\to \infty$\,, as desired.
\end{proof}

\subsection{\texorpdfstring{Multi-gcd Distribution over $\mathbb{Z}$\,}{Multi-gcd Distribution over Z}}
\label{sec:properties of lambda}
$ $

We show some properties of the density $\lambda$ of set unions, subtractions and complements.
They will be very useful in  determining the value of $\lambda$ for specific sets (such as in Remark \ref{rmk:0}\,(iii)).

\begin{theorem}\label{thm:lambda(union)}
Suppose that $\{\mathcal{Z}_{\alpha}\}_{\alpha\in \mathcal{A}}$ are pairwise disjoint subsets of\, $\mathbb{Z}^w$ such that $\lambda(\mathcal{Z}_\alpha)$ exists for all $\alpha\in \mathcal{A}$\,. If $\mathcal{A}$ is a  finite set, then
\begin{equation*}
\lambda\left( \cup_{\alpha\in \mathcal{A}}\, \mathcal{Z}_{\alpha}\right) =\sum_{\alpha\in \mathcal{A}} \lambda(\mathcal{Z}_{\alpha})\,.
\end{equation*}
\end{theorem}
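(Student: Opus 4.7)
The plan is to reduce the statement to finite additivity at the level of the finite-sample densities $\lambda^{(k)}$ and then pass to the limit. Since $\mathcal{A}$ is finite and limits commute with finite sums, the passage is routine once additivity at each fixed $k$ is established.

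First, I would fix $k\in\mathbb{N}$ and examine $\lambda^{(k)}\!\left(\cup_{\alpha\in\mathcal{A}}\mathcal{Z}_\alpha\right)$. By Definition \ref{def:lambda}, this is the probability, for $x$ uniform on $\mathbb{Z}_{(k)}^d$, that $g(x)$ lies in the union. Because the $\mathcal{Z}_\alpha$ are pairwise disjoint in $\mathbb{Z}^w$, the events $\{g(x)\in\mathcal{Z}_\alpha\}$ are pairwise disjoint events in the underlying probability space as $\alpha$ ranges over $\mathcal{A}$. Hence, by finite additivity of the counting probability measure on $\mathbb{Z}_{(k)}^d$,
\begin{equation*}
\lambda^{(k)}\!\left(\bigcup_{\alpha\in\mathcal{A}}\mathcal{Z}_\alpha\right)=\sum_{\alpha\in\mathcal{A}}\lambda^{(k)}(\mathcal{Z}_\alpha).
\end{equation*}

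Next, I would take $k\to\infty$ on both sides. By hypothesis, $\lim_{k\to\infty}\lambda^{(k)}(\mathcal{Z}_\alpha)=\lambda(\mathcal{Z}_\alpha)$ exists for each $\alpha$. Since $\mathcal{A}$ is finite, the limit of the sum on the right equals the sum of the limits, yielding
\begin{equation*}
\lim_{k\to\infty}\lambda^{(k)}\!\left(\bigcup_{\alpha\in\mathcal{A}}\mathcal{Z}_\alpha\right)=\sum_{\alpha\in\mathcal{A}}\lambda(\mathcal{Z}_\alpha).
\end{equation*}
In particular, the limit on the left exists, so $\lambda(\cup_{\alpha}\mathcal{Z}_\alpha)$ is defined and equals the right-hand side, which is the desired identity.

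I do not anticipate any genuine obstacle here; the statement is essentially finite additivity inherited from the finite-$k$ counting measures, together with the trivial interchange of limit and a finite sum. The finiteness of $\mathcal{A}$ is essential, since countable additivity would require a uniform control (e.g., a Vitali–Hahn–Saks type argument) that the bare existence of each $\lambda(\mathcal{Z}_\alpha)$ does not supply, and indeed the authors' statement is carefully restricted to the finite case.
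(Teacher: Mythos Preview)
Your proof is correct and follows essentially the same approach as the paper: use finite additivity of $\lambda^{(k)}$ on disjoint events, then interchange the limit $k\to\infty$ with the finite sum using the hypothesis that each $\lambda(\mathcal{Z}_\alpha)$ exists. The paper's proof is simply a terser one-line version of exactly this argument.
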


\begin{proof}
By Definition \ref{def:lambda}, we have
\begin{equation*}
\sum_{\alpha\in \mathcal{A}} \lambda(\mathcal{Z}_{\alpha})
=\sum_{\alpha\in \mathcal{A}}\lim_{k\to \infty} \lambda^{(k)}(\mathcal{Z}_{\alpha})
=\lim_{k\to \infty} \sum_{\alpha\in \mathcal{A}} \lambda^{(k)}(\mathcal{Z}_{\alpha})
=\lim_{k\to \infty} \lambda^{(k)}\left( \cup_{\alpha\in A}\, \mathcal{Z}_{\alpha}\right)
\end{equation*}
and the conclusion follows.
\end{proof}

\begin{theorem}\label{thm:lambda(subtraction)}
Suppose that $\mathcal{Z}'\subseteq \mathcal{Z}\subseteq \mathbb{Z}^w$ such that $\lambda(\mathcal{Z}')$ and $\lambda(\mathcal{Z})$ both exist, then
\begin{equation*}
\lambda( \mathcal{Z}\setminus \mathcal{Z}') =\lambda(\mathcal{Z})-\lambda(\mathcal{Z}')\,.
\end{equation*}
In particular, for the complement $\mathcal{Z}^c$ of $\mathcal{Z}$ in $\mathbb{Z}^w$, we have
\begin{equation*}
\lambda( \mathcal{Z}^c) =1-\lambda(\mathcal{Z})\,.
\end{equation*}
\end{theorem}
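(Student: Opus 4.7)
The plan is to prove the first identity by lifting the standard set-theoretic finite additivity from the pre-limit quantities $\lambda^{(k)}$ up to the limit $\lambda$, and then obtain the complement formula as a corollary by specializing to $\mathcal{Z} = \mathbb{Z}^w$.

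First I would observe that for every fixed $k$, the quantity $\lambda^{(k)}(\cdot)$ is literally a probability on the finite sample space $\mathbb{Z}_{(k)}^d$ under the pushforward by $g$, so it is finitely additive. In particular, since $\mathcal{Z}' \subseteq \mathcal{Z}$, the sets $\mathcal{Z}'$ and $\mathcal{Z}\setminus \mathcal{Z}'$ are disjoint with union $\mathcal{Z}$, so
\begin{equation*}
\lambda^{(k)}(\mathcal{Z}\setminus \mathcal{Z}') \;=\; \lambda^{(k)}(\mathcal{Z}) \;-\; \lambda^{(k)}(\mathcal{Z}')
\end{equation*}
for every $k$. By hypothesis both $\lambda^{(k)}(\mathcal{Z})$ and $\lambda^{(k)}(\mathcal{Z}')$ converge as $k\to\infty$, so the right-hand side converges to $\lambda(\mathcal{Z}) - \lambda(\mathcal{Z}')$. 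Hence $\lim_{k\to\infty} \lambda^{(k)}(\mathcal{Z}\setminus \mathcal{Z}')$ exists and equals this difference, which by Definition \ref{def:lambda} is exactly the assertion $\lambda(\mathcal{Z}\setminus \mathcal{Z}') = \lambda(\mathcal{Z}) - \lambda(\mathcal{Z}')$.

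For the complement statement, I would apply the identity with $\mathcal{Z}$ replaced by $\mathbb{Z}^w$ and $\mathcal{Z}'$ replaced by $\mathcal{Z}$. To do this I need to know that $\lambda(\mathbb{Z}^w) = 1$; but $g(x) \in \mathbb{Z}^w$ for every $x \in \mathbb{Z}_{(k)}^d$, so $\lambda^{(k)}(\mathbb{Z}^w) = 1$ for all $k$, and the limit is therefore $1$. Plugging in yields $\lambda(\mathcal{Z}^c) = 1 - \lambda(\mathcal{Z})$.

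There isn't really a hard step here: the argument is essentially the observation that finite additivity passes through limits when the relevant limits are assumed to exist, avoiding any appeal to countable additivity or measure-theoretic machinery. The only subtlety worth flagging is that one cannot invoke Theorem \ref{thm:lambda(union)} directly to derive the subtraction identity, because that theorem requires the density of each piece of the partition to exist in advance, whereas here the existence of $\lambda(\mathcal{Z}\setminus \mathcal{Z}')$ is part of what must be established; working at the level of $\lambda^{(k)}$ bypasses this issue cleanly.
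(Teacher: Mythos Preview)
Your proof is correct and follows essentially the same approach as the paper: both arguments use the finite additivity of $\lambda^{(k)}$ to write $\lambda^{(k)}(\mathcal{Z}\setminus\mathcal{Z}') = \lambda^{(k)}(\mathcal{Z}) - \lambda^{(k)}(\mathcal{Z}')$ and then pass to the limit. Your write-up is a bit more explicit about why $\lambda^{(k)}$ is finitely additive and why $\lambda(\mathbb{Z}^w)=1$, but the underlying argument is the same.
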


\begin{proof}
By Definition \ref{def:lambda}, we have
\begin{equation*}
\lambda(\mathcal{Z})-\lambda(\mathcal{Z}')
=\lim_{k\to \infty} \lambda^{(k)}(\mathcal{Z})-\lim_{k\to \infty} \lambda^{(k)}(\mathcal{Z}')
=\lim_{k\to \infty} \left( \lambda^{(k)}(\mathcal{Z})-\lambda^{(k)}(\mathcal{Z}')\right)
=\lim_{k\to \infty} \lambda^{(k)}( \mathcal{Z}\setminus \mathcal{Z}')
\end{equation*}
and the conclusion follows.
\end{proof}

\begin{theorem}\label{thm:lambda(subset)=0}
Suppose that $\mathcal{Y}\in \mathbb{Z}^w$ such that $\lambda(\mathcal{Y})=0$\,, then for any $\mathcal{Z}\subseteq \mathcal{Y}$, we have $\lambda(\mathcal{Z})=0$ as well.
\end{theorem}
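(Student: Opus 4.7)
The plan is to argue directly from Definition \ref{def:lambda} using a sandwich (squeeze) argument, without invoking Theorem \ref{thm:lambda(subtraction)}, since that theorem requires $\lambda(\mathcal{Z})$ to exist \emph{a priori}, which is precisely what we need to establish here.

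First I would observe that for every $k$, since $\mathcal{Z}\subseteq \mathcal{Y}$, the event $\{g(x)\in \mathcal{Z}\}$ is contained in the event $\{g(x)\in \mathcal{Y}\}$ when $x$ is uniform on $\mathbb{Z}_{(k)}^d$. Therefore monotonicity of probability gives the pointwise-in-$k$ inequality
\begin{equation*}
0 \le \lambda^{(k)}(\mathcal{Z}) \le \lambda^{(k)}(\mathcal{Y}).
\end{equation*}
By hypothesis the right-hand side tends to $\lambda(\mathcal{Y})=0$ as $k\to\infty$, so the squeeze theorem forces $\lim_{k\to\infty}\lambda^{(k)}(\mathcal{Z})$ to exist and equal $0$. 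By Definition \ref{def:lambda} this is exactly the statement $\lambda(\mathcal{Z})=0$.

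There is essentially no obstacle here; the only point one has to be careful about is not to quote Theorem \ref{thm:lambda(subtraction)}, which presupposes existence of both densities. The squeeze argument sidesteps this by simultaneously producing the existence of the limit and its value. The proof is therefore two lines long, and no further machinery (Chinese remainder theorem, reduction mod $P_{\mathcal{P}}$, etc.) is needed.
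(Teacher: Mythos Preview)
Your proof is correct and is essentially identical to the paper's own argument: the paper also uses the sandwich inequality $0\le \lambda^{(k)}(\mathcal{Z})\le \lambda^{(k)}(\mathcal{Y})$ (written there via $\liminf$ and $\limsup$) and concludes $\lim_{k\to\infty}\lambda^{(k)}(\mathcal{Z})=0$ by squeezing. Your observation that Theorem~\ref{thm:lambda(subtraction)} is not available here because existence of $\lambda(\mathcal{Z})$ is not yet known is exactly right, and the paper likewise avoids it.
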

\begin{proof}
Since $\lambda^{(k)}(\mathcal{Z})\ge 0$\,, $\mathcal{Z}\subseteq \mathcal{Y}$ and $\lim_{k\to \infty} \lambda^{(k)}(\mathcal{Y})=\lambda(\mathcal{Y})=0$\,, we obtain
\begin{equation*}
0\le\liminf_{k\to \infty} \lambda^{(k)}(\mathcal{Z})\le \limsup_{k\to \infty} \lambda^{(k)}(\mathcal{Z})\le \limsup_{k\to \infty} \lambda^{(k)}(\mathcal{Y})
=\lambda(\mathcal{Y})=0\,.
\end{equation*}
Therefore
\begin{equation*}
\lim_{k\to \infty} \lambda^{(k)}(\mathcal{Z})=0\,,
\end{equation*}
as desired.
\end{proof}

\subsection{\texorpdfstring{Connection between $\lambda$ and $\lambda_{p^s}$}{Connection between lambda and lambda{\_}\{p{\^{}}s\}}}
\label{sec:lambda(Z)}
$ $

We show that the density $\lambda$ exists and in fact, equals the product of some $\lambda_{p^s}$'s.

\begin{asmp}\label{asmp:coprime}
For all $1\le i\le w$, we have $$\gcd(F_1,F_2,\dots,F_h)=\gcd\,(F:F\in U_i)=1\   \mathrm{in}\ \mathbb{Q}[x_1,x_2,\dots,x_d]\,.$$
\end{asmp}

\begin{theorem}\label{thm:lambda(Z)}
Suppose that Assumption \ref{asmp:coprime} holds.  
Given positive integers $r\le w$ and $y_i$, $1\le i\le r$, let $y=\prod_{j=1}^{\infty} p_j^{s_j}$ with $p_j$ the $j$-th smallest prime and $s_j$ nonnegative integers, $j=1,2,\dots$ such that $y_i\,|\,y$ for all $1\le i\le r$,
then the probability $\lambda(\mathcal{Z})$ exists for 
\begin{equation}\label{eq:Z}
\mathcal{Z}=\left\{(z_1,z_2,\dots,z_w) \in \mathbb{Z}_+^w:\ z_i=y_i\,,\  \forall\ i\le r\right\},
\end{equation} 
and in fact 
\begin{equation}\label{eq:lambda(Z)}
\lambda(\mathcal{Z}) = \prod_{j=1}^{\infty} \lambda_{p_j^{s_{j}+1}}(\mathcal{Z})\,.
\end{equation}
\end{theorem}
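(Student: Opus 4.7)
The plan is to reduce the global condition $g(x) \in \mathcal{Z}$ to an intersection of independent local conditions, one per prime, evaluate finitely many of these via Theorem \ref{thm:lambda_{P_P}}, and control the resulting tail with the quantitative estimate \cite[Lemma 21]{PS}. The key observation is that, given $y_i \mid y$ for $i \le r$, the equality $g_i(x) = y_i$ is equivalent to $v_{p_j}(g_i(x)) = v_{p_j}(y_i)$ for every prime $p_j$; and the condition $g_i(x) \in \mathbb{Z}_+$ for $i > r$ merely asks $g_i(x) \ne 0$, which under Assumption \ref{asmp:coprime} holds off a proper affine subvariety and hence off a set of $\lambda^{(k)}$-measure $O(1/k) = o_k(1)$. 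Thus, up to a set of density zero, $\{g(x) \in \mathcal{Z}\} = \bigcap_{j\ge 1} E_j$, where $E_j$ is the local event that $v_{p_j}(g_i(x)) = v_{p_j}(y_i)$ for every $i \le r$.

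First, for each $T \ge 1$, I set
$$\mathcal{Z}_T := \{z \in \mathbb{Z}^w : v_{p_j}(z_i) = v_{p_j}(y_i)\ \text{for all}\ i \le r\ \text{and all}\ p_j \le T\}.$$
Because $s_j + 1 > v_{p_j}(y_i)$ for every $i \le r$, membership in $\mathcal{Z}_T$ is determined by $z \bmod P_T$, where $P_T := \prod_{p_j \le T} p_j^{s_j+1}$, and the image of $\mathcal{Z}_T$ in $(\mathbb{Z}/P_T\mathbb{Z})^w$ agrees, up to unit multiples on each component, with that of $\mathcal{Z}$. Theorem \ref{thm:lambda_{P_P}} therefore yields
$$\lim_{k\to\infty} \lambda^{(k)}(\mathcal{Z}_T) \;=\; \lambda_{P_T}(\mathcal{Z}) \;=\; \prod_{p_j \le T} \lambda_{p_j^{s_j+1}}(\mathcal{Z}).$$

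Second, since $\mathcal{Z} \subseteq \mathcal{Z}_T$, the set $\{g(x)\in\mathcal{Z}_T\setminus\mathcal{Z}\}$ is contained in
$$B_T \;:=\; \bigcup_{i \le r}\bigcup_{p>T}\{x : p \mid g_i(x)\} \;\cup\; \bigcup_{i>r}\{x : g_i(x) = 0\}.$$
The second family has $\lambda^{(k)}$-measure $o_k(1)$ by the subvariety bound noted above. For the first family, Assumption \ref{asmp:coprime} tells us that for each $i$ the polynomials in $U_i$ share no nontrivial common factor in $\mathbb{Q}[x]$, so \cite[Lemma 21]{PS} delivers a bound of the shape $\lambda^{(k)}(\{p \mid g_i(x)\}) \le C_i/p^2$ for all but finitely many $p$, uniformly in $k$. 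Summing over $p > T$ and over $i \le r$ gives $\lambda^{(k)}(B_T) \le \varepsilon(T) + o_k(1)$ with $\varepsilon(T) \to 0$ as $T \to \infty$.

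Combining the two bounds,
$$\lambda^{(k)}(\mathcal{Z}_T) - \varepsilon(T) - o_k(1) \;\le\; \lambda^{(k)}(\mathcal{Z}) \;\le\; \lambda^{(k)}(\mathcal{Z}_T),$$
and letting $k \to \infty$ followed by $T \to \infty$ squeezes $\lambda^{(k)}(\mathcal{Z})$ to the product $\prod_{j\ge 1} \lambda_{p_j^{s_j+1}}(\mathcal{Z})$. This simultaneously establishes existence of $\lambda(\mathcal{Z})$, convergence of the infinite product, and the identity \eqref{eq:lambda(Z)}. The main obstacle is securing the uniform-in-$k$ tail estimate summable over primes $p>T$: this is the step that distinguishes Theorem \ref{thm:lambda(Z)} from the purely soft Theorem \ref{thm:lambda_{P_P}}, and it is precisely where both Assumption \ref{asmp:coprime} and the number-theoretic input \cite[Lemma 21]{PS} are essential.
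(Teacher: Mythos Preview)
Your overall architecture is exactly the paper's: truncate to primes $p_j\le T$, identify the truncated event with a $\bmod\ P_T$ condition handled by Theorem~\ref{thm:lambda_{P_P}}, and show the discrepancy $\{g(x)\in\mathcal{Z}_T\}\setminus\{g(x)\in\mathcal{Z}\}$ is negligible as $T\to\infty$ after $k\to\infty$. The one substantive gap is in your tail step, where you overstate what \cite[Lemma~21]{PS} provides.

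First, that lemma (restated in the paper as Lemma~\ref{lemma:poonen}) concerns \emph{two} polynomials $F,G$ that are coprime in $\mathbb{Q}[x_1,\dots,x_d]$ and yields only the double-limit statement
\[
\lim_{\ell\to\infty}\ \limsup_{k\to\infty}\ \Pr\!\left(\exists\,p>\ell:\ p\mid F(x),\,G(x)\right)=0,
\]
not a per-prime bound $C/p^2$ uniform in $k$. A uniform-in-$k$ estimate of the latter shape is not available here (and indeed cannot hold when $p$ is large relative to $2k+1$), so your ``sum over $p>T$'' line does not go through as written. The fix is simply to use the double-limit form directly: it already gives $\limsup_{k\to\infty}\lambda^{(k)}\bigl(\bigcup_{p>T}\{p\mid F(x),G(x)\}\bigr)\to 0$ as $T\to\infty$, which is all you need.

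Second, Lemma~\ref{lemma:poonen} does not apply to the event $\{p\mid g_i(x)\}$ unaided when $|U_i|>2$, because the polynomials in $U_i$ need not be pairwise coprime; only their overall gcd in $\mathbb{Q}[x]$ is $1$. The paper inserts an extra step (Lemma~\ref{lemma:(G,H)=1}) producing two \emph{coprime} integer linear combinations $\mathcal{G}_i,\mathcal{H}_i$ of the polynomials in $U_i$, so that $p\mid g_i(x)\Rightarrow p\mid \mathcal{G}_i(x),\mathcal{H}_i(x)$, and then applies Lemma~\ref{lemma:poonen} to the pair $(\mathcal{G}_i,\mathcal{H}_i)$. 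You should add this reduction explicitly. With these two corrections your argument becomes essentially identical to the paper's proof.
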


\begin{remark}\label{rmk:0}
{\rm
(i) The right-hand side of \eqref{eq:lambda(Z)} is well-defined since $\lambda_{p^s}(\cdot)\in [0,1]$ for all $p$ and $s$.

(ii) The special case that all $s_j$'s are either $0$ or $1$ follows from \cite[Theorem 2.3]{E}.

(iii) We have assumed that the $y_i$'s are positive. In fact, in the case that $y_i=0$ for some $i$, we have $\lambda(\mathcal{Z})=0$ on the strength of Theorem \ref{thm:lambda(subset)=0} and that the probability that a nonzero polynomial at a random integer vector equals zero is $0$ (see Theorem \ref{thm:G=0}\,(ii) below). }
\end{remark}

\smallskip
To prove Theorem \ref{thm:lambda(Z)}, we need Theorem \ref{thm:lambda_{P_P}} and the following two lemmas.

\begin{lemma}{\rm (\cite[Lemma 5.1]{P} or \cite[Lemma 21]{PS})}
\label{lemma:poonen}
Suppose that  $F,G\in \mathbb{Z}[x_1,x_2,\dots,x_d]$ are relatively prime as elements of $\mathbb{Q}[x_1,x_2,\dots,x_d]$. Let 
$\nu_{\ell}^{(k)}$ be the probability that $p\, | \,F(x), G(x)$ for some prime $p>\ell$  with $x$ uniformly distributed over $\mathbb{Z}_{(k)}^d$, i.e.,
$$\nu_{\ell}^{(k)}:=\# \left\{x\in \mathbb{Z}_{(k)}^d:\ \exists\ \mathrm{prime}\ p>\ell\ \,\mathrm{s.t.}\,\ p\, | \,F(x), G(x)\right\}/\,(2k+1)^d.$$
Then 
$$\lim_{\ell\to \infty} \limsup_{k\to \infty}\, \nu_{\ell}^{(k)}= 0\,.$$
\end{lemma}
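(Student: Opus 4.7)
The plan is a standard ``moderate/large prime'' dichotomy. Using the hypothesis that $F$ and $G$ are coprime in $\mathbb{Q}[x_1,\ldots,x_d]$, eliminate the variable $x_d$ via resultants: after an innocuous $\mathrm{GL}_d(\mathbb{Z})$ change of coordinates making $F,G$ of positive degree in $x_d$ with unit leading coefficients, one obtains a nonzero polynomial $R\in \mathbb{Z}[x_1,\ldots,x_{d-1}]$ and $A,B\in \mathbb{Z}[x_1,\ldots,x_d]$ with $AF+BG=R$. Hence any prime $p$ dividing both $F(x)$ and $G(x)$ also divides $R(x_1,\ldots,x_{d-1})$. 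Choose a threshold $T=T(k):=\log(2k+1)$ and split
\begin{equation*}
\nu_\ell^{(k)} \le S_{\mathrm{mod}}(\ell,k) + S_{\mathrm{big}}(k),
\end{equation*}
where $S_{\mathrm{mod}}$ is the $x$-density for which some $p\in(\ell,T]$ divides both values, and $S_{\mathrm{big}}$ is the $x$-density for which some $p>T$ does so.

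For $S_{\mathrm{mod}}$, outside a finite exceptional set $E$ of primes (those dividing certain resultant/content data) the reductions $\bar F,\bar G$ stay coprime in $\mathbb{F}_p[x]$, so $V(\bar F,\bar G)\subseteq \mathbb{A}^d_{\mathbb{F}_p}$ has dimension $\le d-2$ and a B\'ezout/Lang--Weil bound gives $\#V(\mathbb{F}_p)\le C p^{d-2}$ with $C$ depending only on $\deg F,\deg G,d$. A lattice-in-box count then yields, uniformly in $p\le T$,
\begin{equation*}
\#\{x\in\mathbb{Z}_{(k)}^d : p\mid F(x),G(x)\} \le C\,(2k+1)^d/p^2 + O\bigl((2k+1)^{d-1}\bigr).
\end{equation*}
Summing and dividing by $(2k+1)^d$ gives $S_{\mathrm{mod}}(\ell,k)\le C\sum_{p>\ell}p^{-2}+O(T/(2k+1))$, so $\limsup_{k\to\infty}S_{\mathrm{mod}}(\ell,k)\le C\sum_{p>\ell}p^{-2}\to 0$ as $\ell\to\infty$; the finitely many $p\in E$ are absorbed once $\ell$ exceeds $\max E$.

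For $S_{\mathrm{big}}$, if $p>T$ divides $F(x),G(x)$, then $p\mid R(x_1,\ldots,x_{d-1})$. Either $R(x_1,\ldots,x_{d-1})=0$, which places $(x_1,\ldots,x_{d-1})$ on a proper hypersurface and contributes only $O((2k+1)^{d-1})$ points (Schwartz--Zippel on the remaining $x_d$), or $R(x_1,\ldots,x_{d-1})$ is a nonzero integer of absolute value $O(k^{\deg R})$, so has at most $O(\log k/\log T)$ prime factors greater than $T$. For each admissible triple $(x_1,\ldots,x_{d-1},p)$, the number of $x_d\in\mathbb{Z}_{(k)}$ with $p\mid F(x_1,\ldots,x_d)$ is at most $(\deg_{x_d} F)\bigl((2k+1)/p+1\bigr)$ since $F(x_1,\ldots,x_{d-1},\cdot)$ has at most $\deg_{x_d}F$ roots in each residue class mod $p$. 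Multiplying out and dividing by $(2k+1)^d$,
\begin{equation*}
S_{\mathrm{big}}(k) \ll \frac{1}{2k+1} + \frac{\log(2k+1)}{T\log T} = O\!\left(\frac{1}{\log\log(2k+1)}\right) \to 0
\end{equation*}
as $k\to\infty$. Combining the two estimates yields $\limsup_{k\to\infty}\nu_\ell^{(k)}\le C\sum_{p>\ell}p^{-2}$, whose limit as $\ell\to\infty$ is $0$.

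The main obstacle is the large-prime range: one must simultaneously manage the coordinate split $(x_1,\ldots,x_{d-1},x_d)$, the divisor-count cost $\log k/\log T$, and the root-count cost in $x_d$, and pick $T(k)$ that tames all three while growing to infinity. The choice $T=\log k$ is convenient but anything with $T\to\infty$ and $T=o(k)$ works. The small-prime bound is classical but requires the uniform B\'ezout constant $C$; the exceptional primes that fail coprimality of $\bar F,\bar G$ are finite in number and harmless after $\ell$ passes them.
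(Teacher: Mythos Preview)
The paper does not prove this lemma; it is quoted verbatim from Poonen \cite[Lemma~5.1]{P} and Poonen--Stoll \cite[Lemma~21]{PS} and used as a black box. Your sketch is essentially the argument those references give: eliminate one variable by a resultant, handle moderate primes $\ell<p\le T$ by a dimension/point-count bound on $V(\bar F,\bar G)\subset\mathbb{A}^d_{\mathbb{F}_p}$, and handle large primes $p>T$ by the divisor bound on the nonzero integer $R(x_1,\dots,x_{d-1})$ together with a root count in the remaining variable. The choice $T=\log(2k+1)$ and the resulting $O(1/\log\log k)$ bound for $S_{\mathrm{big}}$ are exactly as in Poonen's proof.

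One small correction: you cannot in general arrange \emph{unit} leading coefficients in $x_d$ by a $\mathrm{GL}_d(\mathbb{Z})$ change (e.g.\ $F=3x_1^2+5x_2^2$). What you can and do need is that, after a substitution $x_i\mapsto x_i+c_i x_d$, the leading coefficient of $F$ (and of $G$) in $x_d$ is a nonzero \emph{integer constant}. This is enough: for all primes $p$ not dividing that constant the polynomial $F(x_1,\dots,x_{d-1},\cdot)$ still has degree $\deg_{x_d}F$ modulo $p$, so your root count in $x_d$ is valid; the finitely many primes dividing the leading coefficients are absorbed once $\ell$ exceeds them, just like your exceptional set $E$. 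With that adjustment the argument is correct and matches the cited proof.
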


\begin{lemma}\label{lemma:(G,H)=1}
Suppose that $G_1,\dots,G_h\in \mathbb{Q}[x_1,x_2,\dots,x_d]$ $(h\ge 2)$ are relatively prime, then there exists  $v=(v_3,\dots,v_{h})\in \mathbb{Z}^{h-2}$ such that
$$\gcd\left(G_1, G_2+\sum_{i=3}^{h} v_i G_i\right) =1.$$
\end{lemma}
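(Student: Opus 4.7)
The plan is to reduce the problem to choosing a single integer parameter $t \in \mathbb{Z}$ via the parametrization $v_i := t^{i-2}$ for $3 \le i \le h$, and then to show that all but finitely many $t$ do the job. The case $h=2$ is immediate: the vector $v$ is empty and the claim reduces to $\gcd(G_1,G_2)=1$, which is part of the hypothesis; so assume $h \ge 3$.

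Write $H(t) := G_2 + t G_3 + t^2 G_4 + \cdots + t^{h-2} G_h \in \mathbb{Q}[x_1,\dots,x_d][t]$, so that I am seeking $t\in\mathbb{Z}$ with $\gcd(G_1,H(t))=1$ in $\mathbb{Q}[x_1,\dots,x_d]$. Because $\mathbb{Q}[x_1,\dots,x_d]$ is a UFD, $G_1$ has only finitely many distinct irreducible factors $P_1,\dots,P_m$, and the required coprimality is equivalent to $P_k \nmid H(t)$ for every $k$. For each $k$ the principal ideal $(P_k)$ is prime, so $R_k := \mathbb{Q}[x_1,\dots,x_d]/(P_k)$ is an integral domain; its fraction field $K_k$ contains $\mathbb{Q}$, and hence $\mathbb{Z}$, as subrings (distinct integers remain distinct in $K_k$ because $P_k$ is non-constant).

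The decisive observation is that the reduction $\bar H(t) = \bar G_2 + t \bar G_3 + \cdots + t^{h-2} \bar G_h$, viewed in $R_k[t]$, is a \emph{nonzero} polynomial in $t$: since $\gcd(G_1,\dots,G_h)=1$ in $\mathbb{Q}[x_1,\dots,x_d]$ and $P_k \mid G_1$, at least one of $\bar G_2,\dots,\bar G_h$ is nonzero in $R_k$, which gives $\bar H(t)$ a nonzero coefficient. Regarded as a polynomial in $t$ over the field $K_k$, $\bar H(t)$ has degree at most $h-2$, so at most $h-2$ elements of $K_k$ are roots, and in particular at most $h-2$ integer values of $t$ can force $P_k \mid H(t)$. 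Taking the union over the finitely many $P_k$, the set of ``bad'' $t \in \mathbb{Z}$ has size at most $m(h-2)$; any other integer $t$ yields $v=(t,t^2,\dots,t^{h-2})$ with the required property.

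The only genuine obstacle is that, although dodging a single $P_k$ is trivial (one could take $v=0$ if $P_k \nmid G_2$, or $v_j=1$ with the other $v_i=0$ for an index $j\ge 3$ with $P_k \nmid G_j$), one must dodge all of $P_1,\dots,P_m$ at once. Parametrizing $v_i$ by powers of a single $t$ is precisely the trick that collapses each of the $m$ obstructions into a polynomial-root condition in one variable, so each contributes only finitely many forbidden $t$, after which the infinitude of $\mathbb{Z}$ closes the argument.
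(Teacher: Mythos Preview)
Your proof is correct but takes a genuinely different route from the paper's. The paper argues by induction on $h$: the base case $h=3$ uses pigeonhole on the finitely many irreducible factors of $G_1$ (if every $G_2+zG_3$ shared a factor with $G_1$, two values $z\neq z'$ would pick the same factor, forcing it to divide $G_3$ and then $G_2$), and the inductive step reduces $h$ polynomials to three by first using the induction hypothesis on $G_2/H,\dots,G_h/H$ with $H=\gcd(G_2,\dots,G_h)$. Your approach skips the induction by parametrizing $v_i=t^{i-2}$ and observing that, modulo each irreducible factor $P_k$ of $G_1$, the reduction $\bar H(t)$ is a nonzero polynomial of degree at most $h-2$ over an integral domain, hence has finitely many integer roots; a union bound over the $P_k$ then gives infinitely many good $t$. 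Your argument is shorter and handles all $h$ uniformly, at the cost of invoking quotient rings and their fraction fields; the paper's induction is slightly more hands-on but needs the auxiliary reduction via $H=\gcd(G_2,\dots,G_h)$ to make the inductive step go through.
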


\begin{proof} 
We prove by induction on $h$. The case $h=2$ is trivial since $\gcd(G_1,G_2)=1$.

\smallskip
\noindent
{\it Base case}: $h=3$\,.

We prove by contradiction. 
Assume the contrary that
$$\gcd(G_1,G_2+z\, G_3 ) \neq 1, \quad \forall\ z\in \mathbb{Z}\,.$$

Suppose that the polynomial factorization of $G_1$ 
is $\phi_{1} \phi_{2}\cdots \phi_{u}$\,, then each $G_2+z\, G_3$ is a multiple of some factor $\phi_{u(z)}$ of $G_1$ $(1\le u(z)\le u)$. Since there are infinitely many $z$'s, by the pigeonhole principle, at least two of the $u(z)$'s are the same, say $u(z)=u(z')\,(z\neq z')$. Then
$$\phi_{u(z)}\,|\,( G_2+z\, G_3) -( G_2+z' G_3) = (z-z')\,G_3\,$$
thus $\phi_{u(z)}\,|\,G_3$ and hence $\phi_{u(z)}\,|\,( G_2+z\, G_3) -z\,G_3=G_2$\,. Recall that $\phi_{u(z)}\,|\,G_1$ as well. This contradicts with the condition that $G_1,G_2$ and $G_3$ are relatively prime.

\smallskip
\noindent
{\it Inductive step}: from $h-1$ to $h\,(\ge 4)$. Assume that the statement holds for $h-1$.

Let $H:=(G_2,G_3,\dots,G_{h}) $ and $H_i:=G_i/H\,(2\le i\le h)$, then 
\begin{equation}\label{eq:(G_1,H)}
\gcd(G_1,H) =\gcd(G_1,G_2,\cdots,G_h)=1=\gcd( H_2,H_3,\cdots,H_h) \,.
\end{equation}

According to the induction hypothesis for $H_2,H_3,\cdots,H_{h}$, there exists $v=(v_4,\dots,v_{h})\in\mathbb{Z}^{h-3}$  such that
\begin{equation}\label{eq:H'_3}
H'_3:=H_3+\sum_{i=4}^h v_i H_i
\end{equation}
satisfies
$$\gcd(H_2,H'_3 )=1. $$
Combining with \eqref{eq:(G_1,H)} gives
$$\gcd(G_1,G_2,H'_3H) =\gcd(G_1,H_2H,H'_3H)=\gcd( G_1,\gcd(H_2H,H'_3H ) )= \gcd(G_1,H)=1. $$
Thus we can apply the base case $h=3$ to $G_1,G_2,H'_3H$
to get an integer $z$ such that 
$$\gcd(G_1, G_2+z H'_3H) = 1.$$

Finally, we represent $H'_3H$ back to a linear combination of the $G_i$'s with integer coefficients by definition \eqref{eq:H'_3}:
$$H'_3H=H_3H+\sum_{i=4}^h v_i H_iH=G_3+\sum_{i=4}^h v_i G_i\,,$$
therefore
$$\gcd\left( G_1, G_2+z\,G_3+z \sum_{i=4}^h v_i G_i\right)  = 1,$$
namely, the statement holds for $h$ with the new $v=(z,z v_4,\dots,z v_h)$.
\end{proof} 

Now we are ready to prove Theorem \ref{thm:lambda(Z)}.

\begin{proof}[Proof of Theorem \ref{thm:lambda(Z)}]
Let
$$\mathcal{P}_{\ell}:=
\{(p_j,s_j+1)\}_{j=1}^{\ell}\,,\quad \ell\in \mathbb{Z}_+\,,$$
then Theorem \ref{thm:lambda_{P_P}} gives
\begin{equation*}
\lambda_{P_{\mathcal{P}_{\ell}}}(\mathcal{Z})
=\prod_{j=1}^{\ell} \lambda_{p_j^{s_{j}+1}}(\mathcal{Z})\,.
\end{equation*}
Since $\lambda_{p_j^{s_{j}+1}}(\mathcal{Z})\in [0,1]$ for all $j$, we can let $\ell\to \infty$\,:
\begin{equation}\label{eq:lim lambda_{P_l}}
\lim_{\ell\to \infty}\lambda_{P_{\mathcal{P}_{\ell}}}(\mathcal{Z})
=\prod_{j=1}^{\infty} \lambda_{p_j^{s_{j}+1}}(\mathcal{Z})=\mathrm{RHS\ of}\ \eqref{eq:lambda(Z)}\,.
\end{equation}
Therefore it suffices to show that
\begin{equation}\label{eq:lambda^k}
\lim_{k\to \infty}\lambda^{(k)}(\mathcal{Z})
=\lim_{\ell\to \infty}\lambda_{P_{\mathcal{P}_{\ell}}}(\mathcal{Z})\,.
\end{equation}

Since $y$ is finite, there exists $j^*\in \mathbb{Z}_+$ such that $s_j=0$ for all $j>j^*$. Let 
$$\mathcal{I}=\left\{(z_1,z_2,\dots,z_w)\in \mathbb{Z}_+^w: z_1=z_2=\cdots=z_r=1\right\}$$ 
then for any $j>j^*$, the sets $\mathcal{Z}$ and $\mathcal{I}$ are equivalent mod $p_j$ under multiplication of the components by units.  

We define  for $\ell>j^*$,
\begin{equation*}
A(\ell):=\left\{ x\in \mathbb{Z}^{d}: g(x)\in\mathcal{Z}\ (\mathrm{mod}\ P_{\mathcal{P}_{\ell}})\right\},\quad 
A^{(k)}(\ell):=\left\{ x\in \mathbb{Z}_{(k)}^{d}: g(x)\in\mathcal{Z}\ (\mathrm{mod}\ P_{\mathcal{P}_{\ell}}) \right\},
\end{equation*}
\begin{equation*}
A^{(k)}:=\left\{ x\in \mathbb{Z}_{(k)}^{d}: g(x)\in\mathcal{Z}\right\}\,\left(\subseteq A^{(k)}(\ell)\right) \,,
\end{equation*}
and
\begin{equation*}
B^{(k)}(\ell):=A^{(k)}(\ell)\setminus A^{(k)},
\end{equation*}
then 
\begin{equation}\label{eq:A^{(k)}}
\lambda^{(k)}(\mathcal{Z})
=\frac{\# A^{(k)}}{K^{d}}\,,\quad \lambda_{P_{\mathcal{P}_{\ell}}}^{(k)}(\mathcal{Z})
=\frac{\#A^{(k)}(\ell)} {K^{d}}
=\frac{\# A^{(k)}+\# B^{(k)}(\ell)}{K^{d}}
\end{equation}
with $K:=2k+1$. Therefore
\begin{equation}\label{eq:lambda_{P_{P_l}}}
\lambda_{P_{\mathcal{P}_{\ell}}}(\mathcal{Z})
=\lim_{k\to \infty} \lambda_{P_{\mathcal{P}_{\ell}}}^{(k)}(\mathcal{Z})
=\lim_{k\to \infty}  \frac{\# A^{(k)}+\# B^{(k)}(\ell)}{K^{d}}\,.
\end{equation}
Combining with the first equation in \eqref{eq:A^{(k)}} leads to 
\begin{equation*}
\limsup_{k\to \infty} \lambda^{(k)}(\mathcal{Z})
\le 
\limsup_{k\to \infty}  \frac{\# A^{(k)}+\# B^{(k)}(\ell)}{K^{d}} 
=
\lambda_{P_{\mathcal{P}_{\ell}}}(\mathcal{Z}) 
\end{equation*}
and
\begin{equation*}
\liminf_{k\to \infty}\lambda^{(k)}(\mathcal{Z})
\ge 
\liminf_{k\to \infty}  \frac{\# A^{(k)}+\# B^{(k)}(\ell)}{K^{d}} - \limsup_{k\to \infty}  \frac{\# B^{(k)}(\ell)}{K^{d}}
\ge
\lambda_{P_{\mathcal{P}_{\ell}}}(\mathcal{Z}) -
\limsup_{k\to \infty}  \frac{\# B^{(k)}(\ell)}{K^{d}}\,.
\end{equation*}
Once we show that
\begin{equation}\label{eq:limB=0}
\lim_{\ell\to \infty}\limsup_{k\to \infty}  \frac{\# B^{(k)}(\ell)}{K^{d}}=0\,,
\end{equation} 
taking $\ell\to \infty$ in the above two inequalities will yield \eqref{eq:lambda^k}.

\smallskip
Now let us prove \eqref{eq:limB=0}. For any $x\in B^{(k)}(\ell)$\,, there exists $j>\ell\,(>j^*)$ such that $g(x)\notin\mathcal{I}$  (mod $p_j^{s_j+1}=p_j$) (recall that $\mathcal{Z}$ and $\mathcal{I}$ are equivalent). Hence $p_j \, | \,g_\eta(x)$ for some $\eta\le r$. 

Recall that $g_{\eta}$ is the gcd of some relatively prime $F_i$'s. If two or more $F_i$'s are involved, then applying Lemma \ref{lemma:(G,H)=1} to these $F_i$'s leads to two relatively prime linear combinations $\mathcal{G}_{\eta}$ and $\mathcal{H}_{\eta}$ of these $F_i$'s with integer coefficients. If there is only one $F_i$ involved, then it must be a constant since the gcd of itself is $1$ in $\mathbb{Q}[x_1,x_2,\dots,x_d]$. In this case, we take $\mathcal{G}_{\eta}=\mathcal{H}_{\eta}=F_i$ so that  $\gcd(\mathcal{G}_{\eta},\mathcal{H}_{\eta})=1$ still holds. 

Since $p_j \, | \,g_{\eta}(x)$, we have $p_j \, \big| \, \mathcal{G}_{\eta}(x),\mathcal{H}_{\eta}(x)$. Hence
\begin{equation}\label{eq:B_t^k}
B^{(k)}(\ell)\subseteq \bigcup_{{\eta}=1}^r\, \overline{B}_{\eta}^{(k)}(\ell)\,,
\end{equation}
where
\begin{equation*}
\overline{B}_{\eta}^{(k)}(\ell):=\left\{x\in \mathbb{Z}_{(k)}^{d}: \exists\ j>\ell \ \ \mathrm{s.t.}\ \ p_j \, \big| \, \mathcal{G}_{\eta}(x)\,,\mathcal{H}_{\eta}(x)\right\}\,.
\end{equation*}

Applying Lemma \ref{lemma:poonen} to $\mathcal{G}_{\eta}$ and $\mathcal{H}_{\eta}$ gives
\begin{equation*}
\lim_{\ell\to \infty}\limsup_{k\to \infty}  \frac{\#\overline{B}_{\eta}^{(k)}(\ell)} {K^{d}}=0\,,\ \ \forall\ {\eta}\,.
\end{equation*}
Combining with \eqref{eq:B_t^k}, we obtain 
\begin{equation*}
\limsup_{\ell\to \infty}\limsup_{k\to \infty}  \frac{\# B^{(k)}(\ell)}{K^{d}}
\le \limsup_{\ell\to \infty}\limsup_{k\to \infty} \sum_{{\eta}=1}^r \frac{\#\overline{B}_{\eta}^{(k)}(\ell)} {K^{d}}
\le\sum_{{\eta}=1}^r \limsup_{\ell\to \infty}\limsup_{k\to \infty}  \frac{\#\overline{B}_{\eta}^{(k)}(\ell)} {K^{d}}
=0\,.
\end{equation*} 
On the other hand, since $\# B^{(k)}(\ell)\ge 0$\,, we have
\begin{equation*}
\liminf_{\ell\to \infty}\limsup_{k\to \infty}  \frac{\# B^{(k)}(\ell)}{K^{d}}
\ge 0\,.
\end{equation*} 
Hence \eqref{eq:limB=0} indeed holds.
\end{proof}

\subsection{Relatively Prime Polynomial Values}
$ $

An interesting application of Theorem \ref{thm:lambda(Z)} is to determine the probability that the polynomial values are relatively prime. 

\begin{theorem}\label{thm:coprime}
Let\, $w=1$ and\, $U_1=\{\{F_1,F_2,\dots,F_h\}\}$ in Definition \ref{def:lambda}. 

\noindent
{\rm (a)}
If $F_1,F_2,\dots,F_h$ are not relatively prime in $\mathbb{Q}[x_1,x_2,\dots,x_d]$, then $\lambda(\{1\})=0$\,;

\noindent
{\rm (b)} 
If $F_1,F_2,\dots,F_h$ are relatively prime, i.e.,
\begin{equation}\label{eq:coprime}
\gcd(F_1,F_2,\dots,F_h)=1\ \ \mathrm{in}\   \mathbb{Q}[x_1,x_2,\dots,x_d]\,. 
\end{equation}
then we have

\noindent
{\rm (i)} $\lambda(\{1\})$ exists and 
$$\lambda(\{1\})=\prod_{p}\lambda_{p}(\{1\})\,;$$

\noindent
{\rm (ii)} the asymptotic result
\begin{equation}\label{eq:p^{-2}}
\lambda_{p}(\{0\})=O(p^{-2})\,;
\end{equation}

\noindent
{\rm (iii)}
$\lambda(\{1\})=0$
if and only if $\lambda_{p}(\{1\})=0$ for some prime $p$, i.e., if and only if there exists a prime $p$ such that $F_1(x),F_2(x),\dots,F_h(x)$ are  multiples of $p$ for all $x$\,;
in words, the probability that the values of relatively prime polynomials at a random integer are relatively prime is $0$ if and only if there exists a prime $p$ such that these polynomials are all always  multiples of $p$.
\end{theorem}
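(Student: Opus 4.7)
My plan is to handle (a) and (b)(i) by appealing to earlier material, to prove (b)(ii) by reducing to two coprime polynomials, and to derive (b)(iii) from an infinite-product argument.

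For (a), a common factor of the $F_i$ of positive degree in $\mathbb{Q}[x_1,\ldots,x_d]$ can be scaled to a primitive polynomial $D \in \mathbb{Z}[x_1,\ldots,x_d]$ that, by Gauss's lemma, divides every $F_i$ in $\mathbb{Z}[x_1,\ldots,x_d]$. Thus $D(x) \mid g(x)$ for every $x \in \mathbb{Z}^d$, so $g(x) = 1$ forces $D(x) = \pm 1$; since $D$ is nonconstant, Schwartz--Zippel applied to the nonzero polynomials $D \mp 1$ bounds the count of such $x \in \mathbb{Z}_{(k)}^d$ by $O(k^{d-1})$, whence $\lambda^{(k)}(\{1\}) = O(1/k) \to 0$. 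Part (b)(i) is then an immediate specialization of Theorem~\ref{thm:lambda(Z)} with $w = r = 1$ and $y_1 = 1$: in that case $y = 1$, every $s_j = 0$, and \eqref{eq:lambda(Z)} collapses to $\lambda(\{1\}) = \prod_p \lambda_p(\{1\})$.

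For (b)(ii), Lemma~\ref{lemma:(G,H)=1} produces integer linear combinations $\mathcal{G}, \mathcal{H}$ of the $F_i$ that are coprime in $\mathbb{Q}[x_1,\ldots,x_d]$. Since $\{g(x) \equiv 0 \pmod p\} \subseteq \{p \mid \mathcal{G}(x),\ p \mid \mathcal{H}(x)\}$, the quantity $\lambda_p(\{0\})$ is at most the density in $(\mathbb{Z}/p\mathbb{Z})^d$ of the common vanishing locus of $\mathcal{G}$ and $\mathcal{H}$. For all but finitely many $p$ the reductions remain coprime in $\mathbb{F}_p[x_1,\ldots,x_d]$, so this locus has codimension $\geq 2$; the Lang--Weil estimate (or an iterated-resultant/Schwartz--Zippel count) then produces a uniform bound of $C p^{d-2}$ $\mathbb{F}_p$-points depending only on the degrees, and dividing by $p^d$ gives $\lambda_p(\{0\}) = O(p^{-2})$, the excluded small primes being absorbed into the implicit constant.

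Finally, (b)(iii) follows by combining (b)(i) and (b)(ii). Modulo $p$ a gcd is either $0$ or a unit, so $\lambda_p(\{1\}) = 1 - \lambda_p(\{0\})$, and (b)(i) rewrites as $\lambda(\{1\}) = \prod_p (1 - \lambda_p(\{0\}))$; by (b)(ii) the series $\sum_p \lambda_p(\{0\})$ converges, so this infinite product vanishes iff some factor does, which is exactly the stated equivalence. The ``in words'' clause is then just the unpacking $\lambda_p(\{1\}) = 0 \Leftrightarrow \lambda_p(\{0\}) = 1 \Leftrightarrow F_i(x) \equiv 0 \pmod p$ for every $i$ and every $x \in (\mathbb{Z}/p\mathbb{Z})^d$. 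The main obstacle is (b)(ii): the other ingredients are essentially bookkeeping on top of earlier results, but the $O(p^{-2})$ bound requires a codimension-$2$ point count on the common zero locus of two coprime polynomials mod $p$, uniform in $p$, which is where genuine algebraic geometry enters the argument.
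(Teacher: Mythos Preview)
Your proposal is correct, and parts (a), (b)(i), and (b)(iii) match the paper's argument almost exactly: the paper likewise reduces (a) to the vanishing density of the level set of a nonconstant polynomial, reads (b)(i) off as the $r=w=1$, $y_1=1$ case of Theorem~\ref{thm:lambda(Z)}, and deduces (b)(iii) from $\lambda_p(\{1\})=1-\lambda_p(\{0\})$ together with the convergence of $\sum_p \lambda_p(\{0\})$ via the elementary bound $\prod(1-\delta_i)\ge 1-\sum\delta_i$.

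The only substantive divergence is in (b)(ii). You invoke Lang--Weil on the codimension-$\ge 2$ locus $\{\mathcal{G}=\mathcal{H}=0\}\subset \mathbb{A}^d_{\mathbb{F}_p}$; the paper instead gives a fully elementary proof by induction on $d$. After the same reduction to two coprime polynomials via Lemma~\ref{lemma:(G,H)=1}, it treats the base case $d=1$ by writing $H_1F_1+H_2F_2=C\in\mathbb{Z}$, and for the inductive step views $F_1,F_2$ as polynomials in $x_1$ over $\mathbb{Z}[x_2,\dots,x_d]$, splitting according to whether $p$ divides all coefficients (handled by the induction hypothesis) or not (handled by a Schwartz--Zippel-type $O(p^{-1})$ bound, proved as Lemma~\ref{lemma:sigma}, applied twice). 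The paper's Remark explicitly acknowledges that (b)(ii) is a special case of Lang--Weil but presents this argument as ``considerably simpler and more approachable.'' Your route is shorter but imports a black box; the paper's route is self-contained and makes the constants effective. One small gap in your sketch: Lemma~\ref{lemma:(G,H)=1} needs $h\ge 2$, so the trivial case $h=1$ (where $F_1$ must be a nonzero constant) should be disposed of separately, as the paper does.
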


\begin{remark} 
Theorem \ref{thm:coprime}\,(b)(ii) and Lemma \ref{lemma:sigma} in the
proof below are special cases of the Lang-Weil bound \cite[Theorem
  1]{LW}. We present a considerably simpler and more approachable
proof.  
As mentioned  in Remark \ref{rmk:0} and \cite[Remark of Lemma 21]{PS},
Theorem \ref{thm:coprime}\,(b)(i) follows from \cite[Theorem 2.3]{E};
whereas its special case $h=2$ was shown in \cite[Theorem 3.1]{P}. 
\end{remark}

\begin{proof}
(a) Let $G=\gcd(F_1,F_2,\dots,F_h)$, then $G$ is a non-constant polynomial. If the gcd $g(x)=1$, then $G(x)=\pm 1$. Thus
$\lambda^{(k)}(\{1\})\le \sigma^{(k)}_{G=1}+\sigma^{(k)}_{G=-1}\to 0$ as $k\to \infty$ on the strength of Theorem \ref{thm:G=0}\,(ii), 
where $\sigma^{(k)}_{G=c}$ $(c=\pm 1)$ is the probability that $G(x)=c$ with $x$ uniformly distributed over $\mathbb{Z}_{(k)}^d$.
Hence $\lambda(\{1\})=0$\,.

\smallskip
(b) (i) follows directly from Theorem \ref{thm:lambda(Z)}. 
For (ii), we prove by induction on $d$. 
First, we notice the following facts:
\smallskip

\noindent
{\it 1}. If $h=1$, then $F_1$ must be a constant due to Assumption \eqref{eq:coprime}. Hence $\lambda_{p}(\{0\})=0$ for all $p>|F_1|$ and \eqref{eq:p^{-2}} follows.
\smallskip

\noindent
{\it 2}. If $h\ge 2$\,, by Lemma \ref{lemma:(G,H)=1}, there exist two linear combinations $G$ and $H$ of the $F_i$'s with integer coefficients such that $\gcd(G,H)=1$ in $\mathbb{Q}[x_1,x_2,\dots,x_d]$. Then $p\,|\,g(x)$ implies that $p\,|\,\gcd(G(x),H(x))$, so it suffices to prove for the case $h=2$\,.  

\smallskip
\noindent
{\it 3}. Assume that $h=2$\,. Let $L$ be the greatest total degree of the $F_i$'s. If $L=0$\,, then $F_1,F_2$ and thus $g$ are nonzero constants. Thus $\sigma_{p}=0$ for any  $p>|g|$ and \eqref{eq:p^{-2}} follows, so we only need to prove for $L\ge 1$. 

\smallskip
\noindent
{\it Base case}: $d=1$. Assume that $h=2$\,.

Thanks to Assumption \eqref{eq:coprime}, there exist $H_1,H_2\in \mathbb{Z}[x_1]$ such that 
$H_1F_1+H_2F_2=C$ with $C$ a positive integer constant. If $p\,|\,g(x)$, then $p\,|\,C$ as well. Hence 
$\sigma_{p}=0$ for all  $p>C$ and \eqref{eq:p^{-2}} follows.

\smallskip
\noindent
{\it Inductive step}: from $d-1$ to $d\,(\ge 2)$. Assume that the statement holds for $d-1$ and that $h=2$ and $L\ge 1$.

Since $L\ge 1$, without loss of generality, we can assume that $F_1$ is not a constant and $x_1$ appears in $F_1$\,.
We recast $F_i$ as a univariate polynomial $G_i\in (\mathbb{Z}[x_2,\dots,x_d])[x_1]$ of degree $L_i$\,, $i=1,2$, then $L_1\ge 1$. Let $\gamma_{i,j}\in \mathbb{Z}[x_2,\dots,x_d]\,(0\le j\le L_i)$ be the coefficients of $x_1^j$ in $G_i$\,.

Since $F_1$ and $F_2$ are relatively prime in $\mathbb{Q}[x_1,x_2,\dots,x_d]$ by  Assumption   \eqref{eq:coprime}, we have 
\begin{equation*}
\gcd\left(\gamma_{i,j}:1\le i\le 2\,, 0\le j\le L_i \right) =1=\gcd(G_1,G_2)\ \ \mathrm{in}\  (\mathbb{Q}[x_2,\dots,x_d])[x_1]\,.
\end{equation*}
As a result, there exist $H_1,H_2\in (\mathbb{Z}[x_2,\dots,x_d])[x_1]$ and $H_3\in \mathbb{Z}[x_2,\dots,x_d]$ such that 
$H_1G_1+H_2G_2=H_3$ and $(H_1,H_2,H_3)=1$ in $\mathbb{Q}[x_1,x_2,\dots,x_d]$.

If $p\,|\,g(x)$, then $p\,|\left( G_i(x_2,\dots,x_d)\right)  (x_1) \,(\forall\ i),H_3(x_2,\dots,x_d)$ and either 

\noindent
(1) $p\,|\,\gamma_{i,j}(x_2,\dots,x_d)$ for all $i$ and $j$; or 

\noindent
(2) $p\nmid \gamma_{i,j}(x_2,\dots,x_d)$ for some $i,j$. 

\smallskip
{\it Case} (1). 
Recall that $L_1\ge 1$. By the induction hypothesis for the at least two polynomials: $\gamma_{i,j}(x_2,$ $\dots,$\,$x_d)\,(1\le i\le 2\,, 0\le j\le L_i)$, the probability that Case (1) happens with $(x_2,\dots,x_d)$ uniformly distributed on $(\mathbb{Z}/p\mathbb{Z})^{d-1}$ is $O(p^{-2})$.

\smallskip
{\it Case} (2). 
We need the following asymptotic result.
\begin{lemma}\label{lemma:sigma}
Let $G\in \mathbb{Z}[x_1,x_2,\dots,x_d]$ be a nonzero polynomial, $p$ a  prime, and $\sigma_{p}$ the probability that $p\,|\,G(x)$ with $x$ uniformly distributed over $(\mathbb{Z}/p\mathbb{Z})^d$, then we have
\begin{equation}\label{eq:sigma_p}
\sigma_{p}=O(p^{-1})\,.
\end{equation}  
\end{lemma}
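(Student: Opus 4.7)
The plan is to prove $\sigma_p = O(p^{-1})$ by induction on the number of variables $d$, via a Schwartz--Zippel style argument. The implicit constant will depend on $G$ (in particular on its total degree $L$ and its content) but not on $p$. I will use the standard fact that a nonzero univariate polynomial of degree $L$ over $\mathbb{F}_p$ has at most $L$ roots, and handle the finitely many ``bad'' primes (those dividing the content of $G$, for which $G$ vanishes identically mod $p$) by absorbing them into the implicit constant, since $\sigma_p \le 1$ is trivially $O(p^{-1})$ as long as the constant is at least that prime.

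For the base case $d=1$, let $L = \deg G$ and let $c$ be the content of $G$. If $p \nmid c$, then $G \bmod p \in \mathbb{F}_p[x_1]$ is nonzero of degree at most $L$, so it has at most $L$ zeros, giving $\sigma_p \le L/p$. The finitely many primes dividing $c$ are absorbed into the constant. For the inductive step, write
$$G(x_1,\dots,x_d) = \sum_{j=0}^{L_1} \gamma_j(x_2,\dots,x_d)\, x_1^j$$
with $\gamma_{L_1} \not\equiv 0$ in $\mathbb{Z}[x_2,\dots,x_d]$, choosing a variable $x_1$ in which $G$ has positive degree (possible unless $G$ is a constant, in which case the conclusion is trivial for $p$ not dividing that constant and absorbed otherwise).

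Now condition on $(x_2,\dots,x_d)\in(\mathbb{Z}/p\mathbb{Z})^{d-1}$ and split according to the event $E$ that $\gamma_{L_1}(x_2,\dots,x_d)\equiv 0\pmod p$. On $E^c$, the specialization $G(\,\cdot\,,x_2,\dots,x_d) \in \mathbb{F}_p[x_1]$ is nonzero of degree exactly $L_1$, so it has at most $L_1$ roots in $\mathbb{Z}/p\mathbb{Z}$; hence the conditional probability over $x_1$ that $p\mid G(x)$ is at most $L_1/p$. On $E$, we bound the conditional probability by $1$. Therefore
$$\sigma_p \le \frac{L_1}{p} + \Pr(E),$$
and applying the induction hypothesis to the nonzero polynomial $\gamma_{L_1}\in\mathbb{Z}[x_2,\dots,x_d]$ in $d-1$ variables yields $\Pr(E) = O(p^{-1})$. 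Combining gives $\sigma_p = O(p^{-1})$.

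The argument has no real obstacle; the only delicate points are (i) ensuring the leading coefficient $\gamma_{L_1}$ really is a nonzero polynomial in fewer variables so that the induction hypothesis applies, which is why one picks $x_1$ to be a variable that actually occurs in $G$, and (ii) handling uniformly the finitely many primes $p$ dividing the content of $G$ or of $\gamma_{L_1}$, which is done once and for all by enlarging the implicit constant. This yields the stated bound and, incidentally, proves a special case of the Lang--Weil estimate \cite{LW} by elementary means, as noted in the remark preceding the lemma.
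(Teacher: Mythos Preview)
Your proof is correct and follows essentially the same Schwartz--Zippel induction on $d$ as the paper: split over whether the leading coefficient $\gamma_{L_1}(x_2,\dots,x_d)$ vanishes mod $p$, bound the univariate count by the degree on the good event, and apply the induction hypothesis to $\gamma_{L_1}$ on the bad event. If anything, you are slightly more explicit than the paper about the finitely many primes dividing the content of $G$, which the paper leaves implicit in its base case.
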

\begin{proof}
Let $L$ be the total degree of $G$. If $L=0$\,, then $G$ is a nonzero constant. For any prime $p>G$, we have $\sigma_{p}=0$\,, thus \eqref{eq:sigma_p} holds. 

Now we assume that $L\ge 1$. We prove by induction on $d$. 

\smallskip
\noindent
{\it Base case}: $d=1$.

Since the number of roots of $G$ mod $p$ is at most $L$, we get $\sigma_{p}\le L/p$ and hence \eqref{eq:sigma_p}. 

\smallskip
\noindent
{\it Inductive step}: from $d-1$ to $d\,(\ge 2)$. Assume that the statement holds for $d-1$.

We recast $G$ as a univariate polynomial $G_1\in (\mathbb{Z}[x_2,x_3,\dots,x_d])[x_1]$. Let $\gamma_1\in \mathbb{Z}[x_2\dots,x_d]$ be the leading coefficient of $G_1$.
Observe that the total degree of $G_1$ is at most $L$\,. If $\gamma_1(x_2,\dots,x_d)\not\equiv 0$ (mod $p$), then the probability that $p\,|\,G_1(x_1)$ with $x_1$ uniformly distributed over $\mathbb{Z}/p\mathbb{Z}$ is no greater than $L/p$\,, according to the base case $d=1$.
On the other hand,  the probability that $p\,|\,\gamma_1(x_2,\dots,x_d)$ with $(x_2,\dots,x_d)$ uniformly distributed over $(\mathbb{Z}/p\mathbb{Z})^{d-1}$ is $O(p^{-1})$ by the induction hypothesis for $\gamma_1$\,.
Combining these two cases, we conclude that the probability that $p\,|\,G(x_1,x_2,\dots,$ $x_d)$ with $(x_1,x_2,\dots,x_d)$ uniformly distributed over $(\mathbb{Z}/p\mathbb{Z})^{d}$ is at most
$L/p+O(p^{-1})=O(p^{-1})$. Hence the statement holds for $d$, as desired.
\end{proof}

Now we go back to the proof of Theorem \ref{thm:coprime}\,(b)(ii).
Thanks to Lemma \ref{lemma:sigma}, the probability that $p\,|\,H_3(x_2,\dots,x_d)$  with $(x_2,\dots,x_d)$ uniformly distributed on $(\mathbb{Z}/p\mathbb{Z})^{d-1}$ is $O(p^{-1})$; moreover, for each $(x_2,\dots,x_d)$ that satisfies $p\nmid \gamma_{i,j}(x_2,\dots,x_d)$ for some $i,j$, the probability that $p\,|\left( G_i(x_2,\dots,x_d)\right) (x_1)$ with $x_1$ uniformly distributed on $\mathbb{Z}/p\mathbb{Z}$ is $O(p^{-1})$. 
Hence the probability that Case (2) happens with $(x_1,x_2,$ $\dots,$\,$x_d)$ uniformly distributed on $(\mathbb{Z}/p\mathbb{Z})^{d}$ is $( O(p^{-1})) ^2=O(p^{-2})$. 
  
\smallskip
Combining Cases (1) and (2), we conclude that the statement holds for $d$ as well, as desired.

\smallskip
(iii) If $\lambda_{p}(\{1\})=0$ for some prime $p$, then  $\lambda(\{1\})=0$ by (i).

Now assume that $\lambda_{p}(\{1\})>0$ for all prime $p$. 
On the strength of (ii), 
there exist a positive constant $c$  and a positive integer $j^*$ such that 
$$p_{j^*}>1+c>\sqrt{c}\,,\quad \lambda_{p_j}(\{0\})\le c\,p_j^{-2}\,,\quad \forall\ j\ge j^*.$$
Thus
\begin{align*}
&\ \prod_{j=j^*}^{\infty}\left(1- \lambda_{p_j}(\{0\})\right) 
\ge \prod_{j=j^*}^{\infty}\left(1- \frac{c}{p_j^2}\right) 
\ge 1-\sum_{j=j^*}^{\infty} \frac{c}{p_j^2}
\ge 1-\sum_{i=p_{j^*}}^{\infty} \frac{c}{i^2}\\
\ge&\ 1-\sum_{i=p_{j^*}}^{\infty} c\left( \frac{1}{i-1}-\frac{1}{i}\right) 
=1-\frac{c}{p_{j^*}-1}>0\,,
\end{align*}
where in the second inequality, we take advantage of the well-known inequality:
\begin{equation}\label{eq:1-delta}
(1-\delta_1)(1-\delta_2)\cdots(1-\delta_{u})\ge 1-\delta_1-\delta_2-\cdots-\delta_{u}
\end{equation} 
for $\delta_1,\delta_2,\dots,\delta_{u}\in [0,1]$, which can be proved easily by induction on $u$ (base cases: $u=1,2$; inductive step from $u$ to $u+1$: $(1-\delta_1)(1-\delta_2)\cdots(1-\delta_{u+1})\ge (1-\delta_1)(1-\delta_2-\cdots-\delta_{u+1}) \ge 1-\delta_1-\delta_2-\cdots-\delta_{u+1} $).

Hence
$$\prod_{p}\lambda_{p}(\{1\})
=\prod_{j=1}^{j^*-1}\lambda_{p_j}(\{1\})\cdot \prod_{j=j^*}^{\infty}\left(1- \lambda_{p_j}(\{0\})\right)>0\,.$$
\end{proof}

\subsection{Zero Polynomial Values}\label{sec:g_i=0}
$ $

Remark \ref{rmk:0}\,(iii) used a well-known result that the probability that a nonzero polynomial at a random integer vector equals zero is $0$ (\cite[Lemma 4.1]{P}). 
We conclude this section with a different proof by estimating this probability from above by $\sigma_p$ and applying Lemma \ref{lemma:sigma}.

\begin{theorem}\label{thm:G=0}
Let $G\in \mathbb{Z}[x_1,x_2,\dots,x_d]$ be a nonzero polynomial, $p$ a prime, $\sigma_{p}^{(k)}$ the probability that $p\,|\,G(x)$ with $x$ uniformly distributed over $\mathbb{Z}_{(k)}^d$, and $\sigma_{p}$ the 
probability that $p\,|\,G(x)$ with $x$ uniformly distributed over $(\mathbb{Z}/p\mathbb{Z})^d$, then 

\noindent
{\rm (i)} we have
$$\sigma_{p}^{(k)}\to \sigma_{p}\ \  \mathrm{as}\ \ k\to \infty\,,\quad \mathrm{and}\quad \sigma_{p}^{(k)}\le 2^d\sigma_{p}\,, \ \forall \ k>(p-1)/2\,;$$

\noindent
{\rm (ii)} 
the probability $\sigma^{(k)}$ that $G(x)=0$ with $x$ uniformly distributed over $\mathbb{Z}_{(k)}^d$ goes to $0$ as $k\to \infty$\,;
in words, the probability that a nonzero polynomial at a random integer vector equals zero is $0$. As a consequence, for any given integer $c$, the probability that $G(x)=c$ is either $0$ or $1$ (consider the polynomial $G(x)-c$).
\end{theorem}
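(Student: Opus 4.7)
The plan is to prove (i) by the same residue-counting technique that appears in the proof of Theorem \ref{thm:lambda_{P_P}}, and then to deduce (ii) by estimating $\sigma^{(k)}$ pointwise from above by $\sigma_p^{(k)}$ for every prime $p$ and driving $\sigma_p \to 0$ via Lemma \ref{lemma:sigma}.

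For part (i), I would set $K = 2k+1$ and, for $K > p$, write $K = qp + r$ with $q \ge 1$ and $0 \le r < p$. Each residue class modulo $p$ is represented by $q$ or $q+1$ elements of $\mathbb{Z}_{(k)}$, and the condition $p \mid G(x)$ depends only on $x$ modulo $p$, so the number of $x \in \mathbb{Z}_{(k)}^d$ with $p \mid G(x)$ lies between $q^d \cdot p^d \sigma_p$ and $(q+1)^d \cdot p^d \sigma_p$. Dividing by $K^d$ and using $qp \le K < (q+1)p$ gives
\[
\bigl(q/(q+1)\bigr)^d \sigma_p \;\le\; \sigma_p^{(k)} \;\le\; \bigl((q+1)/q\bigr)^d \sigma_p.
\]
Both endpoints converge to $\sigma_p$ as $q\to\infty$ (equivalently $k\to\infty$), establishing $\sigma_p^{(k)} \to \sigma_p$; and $q\ge 1$ (which holds once $k > (p-1)/2$) forces $(q+1)/q \le 2$, yielding $\sigma_p^{(k)} \le 2^d \sigma_p$.

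For part (ii), the key observation is that $G(x)=0$ forces $p\mid G(x)$ for every prime $p$, so $\sigma^{(k)} \le \sigma_p^{(k)}$ for each $p$. Combining with (i) gives $\sigma^{(k)} \le 2^d\,\sigma_p$ for all $k > (p-1)/2$, and hence $\limsup_{k\to\infty}\sigma^{(k)} \le 2^d\,\sigma_p$ for every prime $p$. Lemma \ref{lemma:sigma} then yields $\sigma_p = O(p^{-1})$, so letting $p\to\infty$ forces $\limsup_{k\to\infty}\sigma^{(k)} = 0$, i.e.\ $\sigma^{(k)}\to 0$. For the stated consequence, apply this result to the polynomial $G(x)-c$: if $G-c$ is identically zero the probability is $1$, otherwise $G-c$ is a nonzero polynomial and the first part of (ii) gives probability $0$. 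The only mild care is obtaining the uniform constant $2^d$ in (i); once in hand, (ii) is a one-line limit interchange via Lemma \ref{lemma:sigma}, and I do not anticipate any genuine obstacle.
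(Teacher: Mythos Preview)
Your proposal is correct and follows essentially the same approach as the paper: both parts use the residue-counting bound $(q/(q+1))^d\sigma_p \le \sigma_p^{(k)} \le ((q+1)/q)^d\sigma_p$ derived exactly as in the proof of Theorem~\ref{thm:lambda_{P_P}}, and part (ii) is obtained by bounding $\sigma^{(k)}\le\sigma_p^{(k)}\le 2^d\sigma_p$ and invoking Lemma~\ref{lemma:sigma} to send $\sigma_p\to 0$.
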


\begin{proof}
(i) We follow a similar approach as in the proof of the first equality of \eqref{eq:lambda_{P_P}(Z)}. 
Let $k\in \mathbb{Z}$ such that $K:=2k+1>p$.
Then there exists $q\in \mathbb{Z}_+$ such that
\begin{equation}\label{eq:q'}
q\cdot P_{\mathcal{P}} \le  K < (q+1)\cdot P_{\mathcal{P}}\,. 
\end{equation}  
It follows that  for any integer $y$, there are either $q$ or $q+1$ numbers among $\mathbb{Z}_{(k)}$ that equal $y$ mod $p$.
Further, the number of $x\in (\mathbb{Z}/p\mathbb{Z})^d$ for which $p\,|\,G(x)$ is $p^d\sigma_{p}$\,, thus the number of $x\in \mathbb{Z}_{(k)}^d$ for which $p\,|\,G(x)$ is between 
$q^{d}p^d \sigma_{p}$ and $(q+1)^{d} p^d \sigma_{p}$\,. Therefore
\begin{equation}\label{eq:sigma_p^k}
\sigma_{p}^{(k)}\in \left[\frac{q^{d} p^d \sigma_{p}}{ K^{d} }\,, \frac{(q+1)^{d} p^d \sigma_{p}}{ K^{d} }\right]:=\Sigma_k\,.
\end{equation}
Thanks to \eqref{eq:q'}, we have
\begin{equation}\label{eq:Sigma_k}
\Sigma_k \subseteq \left[\frac{q^{d} p^d \sigma_{p}}{ [ (q+1)p]^d }\,, \frac{(q+1)^{d} p^d \sigma_{p}}{ ( qp) ^d }\right] 
=\left[ \left(\frac{q}{q+1}\right) ^{d} \sigma_{p}\,, \left(\frac{q+1}{q}\right) ^{d} \sigma_{p} \right] ,
\end{equation}
whose left and right endpoints both converge to $\sigma_{p}$ as $q\to \infty$\,. Hence  
\begin{equation*}
\sigma_{p}^{(k)}\to \sigma_{p}\,,\quad \mathrm{as}\ q\to \infty\,,\ \mathrm{or\ equivalently,\ as}\ k\to \infty\,. 
\end{equation*} 

Additionally, we deduce $\sigma_{p}^{(k)}\le 2^d\sigma_{p}$ from \eqref{eq:sigma_p^k} and \eqref{eq:Sigma_k} along with $q\ge 1$.

\smallskip
(ii) The probability $\sigma^{(k)}$ is no greater than $\sigma_{p}^{(k)}$, which by virtue of (i) and Lemma \ref{lemma:sigma}, converges to $0$ as $p,k\to \infty$ with $k>(p-1)/2$\,. 
\end{proof}

\section{SNF Distribution}\label{sec:SNF}
Let $m\le n$ be two positive integers. 
We shall define the {\it density} of SNF of a random $n\times m$ integer matrix as the limit (if exists) of the density of SNF of a random $n\times m$ matrix with entries independent and uniformly distributed over $\mathbb{Z}_{(k)}$ as $k\to \infty$ (see Definition \ref{def:mu} below for a precise definition). 

If we regard the minors of an $n\times m$ matrix  as polynomials of the $nm$ matrix entries with integer coefficients, then the SNF of a matrix is uniquely determined by the values of these polynomials. 
Specifically, let $x_1,x_2,\dots,x_{nm}$ be the $nm$ entries of an $n\times m$ matrix, $F_j$'s be the minors of an $n\times m$ matrix as elements in $\mathbb{Z}[x_1,x_2,\dots,x_{nm}]$, $U_i$ be the set of $i\times i$ minors $(1\le i\le m)$, then the SNF of this matrix is the diagonal matrix whose $i$-th diagonal entry is $0$ if $g_i(x)=0$ and $g_i(x)/g_{i-1}(x)$ otherwise, where $x=(x_1,x_2,\dots,x_{nm})$ and $g_i(x)$ is defined in \eqref{eq:gi}. 

In this spirit, the multi-gcd distribution as well as the results in Sections \ref{sec:lambda_{P_P}}--\ref{sec:lambda(Z)}  have analogues for the SNF distribution of a random integer matrix. 
This section presents these analogues and the next section will use them to compute the density $\mu$ for some interesting types of sets.

Conventionally, the SNF is only defined for a nonzero matrix; however, for convenience, we shall define the SNF of a zero matrix to be itself, so that SNF is well-defined for all matrices.
This definition does not change the density (if exists) of SNF of a random $n\times m$ integer matrix since 
the probability of a zero matrix with entries from $\mathbb{Z}_{(k)}$ is  $1/(2k+1)^{nm}$, which converges to $0$ as $k\to \infty$\,. 

We denote the SNF of an $n\times m$ matrix $M$ by $\mathrm{SNF}(M)=(\mathrm{SNF}(M)_{i,j})_{n\times m}$ and let $\mathbb{S}$ be the set of all candidates for SNF of an $n\times m$ integer matrix, i.e., the set of $n\times m$ diagonal matrices whose diagonal entries $(d_1,d_2,\dots,d_m)$ are nonnegative integers  such that $d_{i+1}$ is a multiple of $d_i$\,, $i=1,2,\dots, m-1$. 

For ease of notation, we shall always assume that the matrix entries are {\it independent and uniformly distributed} on its range (if known, e.g., $\mathbb{Z}_{(k)}$ or  $\mathbb{Z}/P_{\mathcal{P}} \mathbb{Z}$), and that the notion $\mathrm{SNF}(M)\in \mathcal{S}$ or $\mathrm{SNF}(M)=D$ (mod $P_{\mathcal{P}}$) for some $\mathcal{S}\subseteq \mathbb{S}$, $D\in \mathbb{S}$ and $P_{\mathcal{P}}=\prod_{(p,s)\in \mathcal{P}}p^s\in \mathbb{Z}_+$ implies the {\it equivalence of multiplication of the entries of $M$ by units} in $\mathbb{Z}/P_{\mathcal{P}} \mathbb{Z}$\,, 
thus we can assume for convenience that the entries of $\mathrm{SNF}(M)$ (mod $P_{\mathcal{P}}$) are zero or divisors of $P_{\mathcal{P}}$.

\begin{definition}\label{def:mu}
(i) For $\mathcal{S}\subseteq \mathbb{S}$\,, 
we denote by $\mu^{(k)}(\mathcal{S})$ the probability that $\mathrm{SNF}(M)\in \mathcal{S}$ with entries of $M$ from $\mathbb{Z}_{(k)}$. 
If \,$\lim_{k\to \infty} \mu^{(k)}(\mathcal{S})=\mu(\mathcal{S})$ exists, then we say that the {\em probability that $\mathrm{SNF}(M)\in \mathcal{S}$ with $M$ a random $n\times m$ integer matrix} is $\mu(\mathcal{S})$. If this is the case, then $\mu(\mathcal{S})\in [0,1]$ since  $\mu^{(k)}(\mathcal{S})\in [0,1]$ for all $k$.

\smallskip
(ii) We define similarly the {\rm SNF distribution over the ring of integers mod $p^s$}: for prime $p$ and positive integer $s$, we denote by $\mu^{(k)}_{p^s}(\mathcal{S})$  the  probability that the $\mathrm{SNF}(M)\in \mathcal{S}\ (\mathrm{mod}\ p^s)$ with entries of $M$ from $\mathbb{Z}_{(k)}$, and by $\mu_{p^s}(\mathcal{S})$ the probability that $\mathrm{SNF}(M)\in \mathcal{S}\ (\mathrm{mod}\ p^s)$ with entries of $M$ from $\mathbb{Z}/p^s \mathbb{Z}$\,.  

\smallskip
More generally, for a finite set \,$\mathcal{P}$ of prime and positive integer pairs $(p,s)$ (with $p$ a prime  and  $s$ a positive integer), we denote by $\mu^{(k)}_{P_{\mathcal{P}}}(\mathcal{S})$ the probability that \,$\mathrm{SNF}(M)\in \mathcal{S}\ (\mathrm{mod}\ P_{\mathcal{P}})$ with entries of $M$ from $\mathbb{Z}_{(k)}$, and by $\mu_{P_{\mathcal{P}} }(\mathcal{S})$ the probability that\, $\mathrm{SNF}(M)\in \mathcal{S}\ (\mathrm{mod}\ P_{\mathcal{P}})$ with entries of $M$ from $\mathbb{Z}/P_{\mathcal{P}}  \mathbb{Z}$\,.  Note that $\mu_{P_{\mathcal{P}}}(\mathcal{S})$ is the number of matrices $M$ over $P_{\mathcal{P}}$ such that $\mathrm{SNF}(M)\in \mathcal{S}\, (\mathrm{mod}\ P_{\mathcal{P}})$ divided by $P_{\mathcal{P}}^{nm}$.  
The situation discussed in the previous paragraph is the special case that \,$\mathcal{P}$ consists of only one element $(p,s)$ and $P_\mathcal{P}=p^s$.
\end{definition}

In this section, we establish
a formula for $\mu_{p^s}$, discuss
the properties of $\mu_{P_{\mathcal{P}}}$ and $\mu$, show the existence of $\mu$ and represent it as a product of $\mu_{p^s}$'s. 

\subsection{\texorpdfstring{SNF Distribution over $\mathbb{Z}/P_{\mathcal{P}} \mathbb{Z}\,$}{SNF Distribution over Z/(P{\_}P)Z}}
$ $

We have the following formula for $\mu_{p^s}$ and analogue of Theorem \ref{thm:lambda_{P_P}} for SNFs.

\begin{theorem}\label{thm:p^s}
{\rm{(i)}} Given a prime $p$, a positive integer $s$ and a sequence of integers $0=a_0\le a_1\le \cdots \le a_s \le a_{s+1} = m$\,, let ${\bm a}:=(a_1,a_2,\dots,a_s)$ and $D_{\bm a}\in \mathbb{S}$ be the diagonal matrix  with exactly $(a_i - a_{i-1})$ $p^{i-1}$'s, i.e., $a_i$ non-$p^i$-multiples,  $1\le i\le s$ on its diagonal. Then we have
\begin{equation}\label{eq:mu_{p^s}(D)}
\mu_{p^s}(\{D_{\bm a}\})=p^{- \sum_{i=1}^{s}(n-a_i)(m-a_i)} \cdot 
\frac{[p,n][p,m]}
{[p,n-a_s][p,m-a_s] \prod_{i=1}^{s} [p,a_i-a_{i-1}]}\,,
\end{equation} 
where  
\begin{equation*}
[p,0]=1,\quad [p,\ell]: = \prod_{j=1}^{\ell} \left( 1-p^{-j}\right) \,,\quad \ell\in \mathbb{Z}_+\,.
\end{equation*} 

{\rm{(ii)}}
For any $\mathcal{S}\subseteq  \mathbb{S}$, we have
\begin{equation*}
\mu_{P_{\mathcal{P}}}(\mathcal{S})
=\sum_{D\in \mathcal{S} \,(\mathrm{mod}\, P_{\mathcal{P}})} \mu_{P_{\mathcal{P}}}(\{D\}) 
\end{equation*}
and
\begin{equation}\label{eq:mu_{P_P}}
\lim_{k\to \infty}\mu^{(k)}_{P_{\mathcal{P}}}
=\mu_{P_{\mathcal{P}}}(\mathcal{S})
=\prod_{(p,s)\in \mathcal{P}}\mu_{p^s}(\mathcal{S}) \,.
\end{equation}
\end{theorem}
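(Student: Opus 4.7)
The plan is to prove part (i) via the orbit--stabilizer theorem. The product group $G := GL_n(\mathbb{Z}/p^s\mathbb{Z}) \times GL_m(\mathbb{Z}/p^s\mathbb{Z})$ acts on $M_{n \times m}(\mathbb{Z}/p^s\mathbb{Z})$ by $(P, Q) \cdot M := PMQ^{-1}$, and since $\mathbb{Z}/p^s\mathbb{Z}$ is a principal ideal ring, two matrices share the same SNF (up to units) if and only if they lie in the same $G$-orbit. Hence
$$\mu_{p^s}(\{D_{\bm a}\}) = \frac{|G \cdot D_{\bm a}|}{p^{snm}} = \frac{|GL_n(\mathbb{Z}/p^s\mathbb{Z})| \cdot |GL_m(\mathbb{Z}/p^s\mathbb{Z})|}{p^{snm} \cdot |\mathrm{Stab}_G(D_{\bm a})|}.$$
Lifting from $GL_N(\mathbb{F}_p)$ via the surjection $\mathbb{Z}/p^s\mathbb{Z} \twoheadrightarrow \mathbb{F}_p$ gives $|GL_N(\mathbb{Z}/p^s\mathbb{Z})| = p^{(s-1)N^2}|GL_N(\mathbb{F}_p)| = p^{sN^2}[p, N]$, which already supplies the factor $[p, n][p, m]$ in the target formula. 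The remaining task is to evaluate $|\mathrm{Stab}_G(D_{\bm a})|$.

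To that end, partition the rows and columns of $P$ and $Q$ into blocks of sizes $b_i := a_i - a_{i-1}$ for $i = 1, \ldots, s$ together with a trailing block of size $n - a_s$ for $P$ and $m - a_s$ for $Q$, so that $D_{\bm a}$ becomes block-diagonal with blocks $p^{i-1}I_{b_i}$ for $i \le s$ and a zero block in position $(s+1, s+1)$. Writing $P = (P_{ij})$ and $Q = (Q_{ij})$, the equation $PD_{\bm a} = D_{\bm a}Q$ decomposes into
$$p^{j-1}P_{ij} \equiv p^{i-1}Q_{ij}\ (\mathrm{mod}\ p^s)\quad(i, j \le s),\qquad p^{s-j+1}\mid P_{s+1, j},\qquad p^{s-i+1}\mid Q_{i, s+1},$$
with no constraint on the corner block. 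For $i < j$, $P_{ij}$ is free over $\mathbb{Z}/p^s\mathbb{Z}$ and pins $Q_{ij}$ modulo $p^{s-i+1}$ (leaving $p^{(i-1)b_ib_j}$ lifts); the roles swap for $i > j$. Hence modulo $p$, $P$ is block upper-triangular and $Q$ is block lower-triangular, so each is invertible iff its diagonal blocks are, producing the factor $[p, n - a_s][p, m - a_s]\prod_{i=1}^{s}[p, b_i]$ in $|\mathrm{Stab}_G(D_{\bm a})|$. Summing all block-level $p$-adic exponents and telescoping via $\sum_i b_i = a_s$ yields the power $p^{sn^2 + sm^2 - snm + \sum_{i=1}^{s}(n-a_i)(m-a_i)}$; substitution into the orbit--stabilizer formula collapses to the claimed expression.

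For part (ii), the first equality in \eqref{eq:mu_{P_P}} is immediate from Definition \ref{def:mu} by summing over the finitely many SNF representatives mod $P_{\mathcal{P}}$. The product formula follows from the Chinese Remainder Theorem: the isomorphism $\mathbb{Z}/P_{\mathcal{P}}\mathbb{Z} \cong \prod_{(p, s) \in \mathcal{P}}\mathbb{Z}/p^s\mathbb{Z}$ induces a bijection of $n \times m$ matrix sets compatible with the $GL_n \times GL_m$-action, so the event ``$\mathrm{SNF}(M) \in \mathcal{S}$ $(\mathrm{mod}\ P_{\mathcal{P}})$'' factors componentwise under CRT and the counts multiply; dividing by $P_{\mathcal{P}}^{nm}$ gives the identity. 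Convergence $\mu_{P_{\mathcal{P}}}^{(k)}(\mathcal{S}) \to \mu_{P_{\mathcal{P}}}(\mathcal{S})$ is then obtained by the identical sandwich argument as in step (3) of the proof of Theorem \ref{thm:lambda_{P_P}}: with $K := 2k+1$ and $q \in \mathbb{Z}_+$ such that $qP_{\mathcal{P}} \le K < (q+1)P_{\mathcal{P}}$, each residue class mod $P_{\mathcal{P}}$ is represented by $q$ or $q+1$ elements of $\mathbb{Z}_{(k)}$, so $\mu_{P_{\mathcal{P}}}^{(k)}(\mathcal{S})$ is squeezed between $(q/(q+1))^{nm}\mu_{P_{\mathcal{P}}}(\mathcal{S})$ and $((q+1)/q)^{nm}\mu_{P_{\mathcal{P}}}(\mathcal{S})$, both tending to $\mu_{P_{\mathcal{P}}}(\mathcal{S})$. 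The main obstacle is the combinatorial bookkeeping in part (i): specifically, verifying that the diagonal coupling $P_{ii} \equiv Q_{ii}\ (\mathrm{mod}\ p^{s-i+1})$ interacts with the off-diagonal divisibility constraints to produce exactly the advertised exponent $\sum_{i}(n-a_i)(m-a_i)$, after which part (ii) is routine.
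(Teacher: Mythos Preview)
Your proposal is correct, and for part (ii) it coincides with the paper's argument: the paper simply says that (ii) is a direct application of Theorem~\ref{thm:lambda_{P_P}} to SNFs, which is exactly your CRT-plus-sandwich argument.

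For part (i), however, you take a genuinely different route. The paper does not compute anything: it cites \cite[Theorem~1]{FNKS13} (or \cite[Theorem~2]{FNKS14}) for the number of $n\times m$ matrices over $\mathbb{Z}/p^s\mathbb{Z}$ with SNF equal to $D_{\bm a}$, simplifies that expression to~\eqref{eq:N}, and divides by $p^{snm}$. You instead give a self-contained orbit--stabilizer argument, computing $|GL_N(\mathbb{Z}/p^s\mathbb{Z})|=p^{sN^2}[p,N]$ and then the stabilizer of $D_{\bm a}$ via the block decomposition. Your block analysis is correct: the equations $p^{j-1}P_{ij}\equiv p^{i-1}Q_{ij}$ for $i,j\le s$, the divisibility constraints on $P_{s+1,j}$ and $Q_{i,s+1}$, and the freedom of $P_{i,s+1}$, $Q_{s+1,j}$ and the $(s+1,s+1)$ corners are exactly as you describe, and the block-triangularity mod $p$ forces invertibility to reduce to the diagonal blocks, yielding the factor $[p,n-a_s][p,m-a_s]\prod_i[p,a_i-a_{i-1}]$. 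The exponent bookkeeping you flag as the main obstacle does telescope to $sn^2+sm^2-snm+\sum_i(n-a_i)(m-a_i)$, after which the formula drops out. The payoff of your approach is that it makes the paper self-contained at this point and exposes the structure behind the formula; the paper's approach is shorter on the page but outsources the work.
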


\begin{proof} 
(ii) and (iii) are direct applications of Theorem \ref{thm:lambda_{P_P}} to SNFs. 
For (i), we compute
the number of $n\times m$ matrices over $\mathbb{Z}/p^s \mathbb{Z}$ whose SNF is  $D_{\bm a}$ by \cite[Theorem 1]{FNKS13} (or \cite[Theorem 2]{FNKS14}) and simplify it to 
\begin{equation}\label{eq:N}
p^{\sum_{i=1}^s \left[ (n+m)a_i - a_i^2\right] } \cdot \frac{\prod_{i=0}^{a_s-1} (1-p^{-n+j}) (1-p^{-m+j})}{\prod_{i=0}^{s-1}\prod_{j=1}^{a_{i+1}-a_i}(1-p^{-j}) }=: N .
\end{equation}
Thus 
\begin{equation*}
\mu_{p^s}(\{D_{\bm a}\})=p^{-snm}N=\mathrm{RHS\ of}\ \eqref{eq:mu_{p^s}(D)}\,.
\end{equation*} 
\end{proof}

\begin{remark}  
In the case of $s=1$, the formula \eqref{eq:N} gives the number of $n\times m$ matrices  over $\mathbb{Z}/p \mathbb{Z}$  of rank $a_1$ and is consistent with  \cite[Exercise 1.192(b)]{S}; whereas in the case of $n=m$, a  calculation shows that \eqref{eq:N} is consistent with the results in \cite[pp.\,233, 236]{FW} (their $|\mathrm{Aut}\, H|$ is our $N$). 
\end{remark}

\subsection{\texorpdfstring{SNF Distribution over $\mathbb{Z}$\,}{SNF Distribution over Z}}
$ $

The properties of $\lambda$ of set unions, subtractions and complements in Section \ref{sec:properties of lambda} also carry over to SNFs.
They will be useful in determining the value of $\mu$ for some specific sets (for instance, the singleton set of the identity matrix as in Section \ref{sec:1}).

\begin{theorem}\label{thm:mu(union)}
Suppose that $\{\mathcal{S}_{\alpha}\}_{\alpha\in \mathcal{A}}$ are pairwise disjoint subsets of\, $\mathbb{S}$ such that $\mu(\mathcal{S}_\alpha)$ exists for all $\alpha\in \mathcal{A}$\,. If $\mathcal{A}$ is a  finite set, then
\begin{equation*}
\mu\left( \cup_{\alpha\in \mathcal{A}}\, \mathcal{S}_{\alpha}\right) =\sum_{\alpha\in \mathcal{A}} \mu(\mathcal{S}_{\alpha})\,.
\end{equation*}
\end{theorem}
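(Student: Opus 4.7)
The plan is to mirror exactly the argument used for Theorem \ref{thm:lambda(union)}, transported from the multi-gcd setting to the SNF setting. The key observation is that $\mu^{(k)}$ is, by Definition \ref{def:mu}, an honest probability measure on the finite sample space of $n\times m$ matrices with entries drawn uniformly and independently from $\mathbb{Z}_{(k)}$. On a finite sample space, finite additivity is immediate: for any finite collection $\{\mathcal{S}_\alpha\}_{\alpha\in\mathcal{A}}$ of pairwise disjoint subsets of $\mathbb{S}$, the events $\{\mathrm{SNF}(M)\in \mathcal{S}_\alpha\}$ are pairwise disjoint, so
\begin{equation*}
\mu^{(k)}\!\left(\bigcup_{\alpha\in\mathcal{A}}\mathcal{S}_{\alpha}\right)=\sum_{\alpha\in\mathcal{A}}\mu^{(k)}(\mathcal{S}_{\alpha}).
\end{equation*}

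Next, I would invoke the hypothesis that each $\mu(\mathcal{S}_\alpha)=\lim_{k\to\infty}\mu^{(k)}(\mathcal{S}_\alpha)$ exists, together with the fact that $\mathcal{A}$ is finite, to interchange the finite sum with the limit in $k$. Explicitly,
\begin{equation*}
\sum_{\alpha\in\mathcal{A}}\mu(\mathcal{S}_{\alpha})
=\sum_{\alpha\in\mathcal{A}}\lim_{k\to\infty}\mu^{(k)}(\mathcal{S}_{\alpha})
=\lim_{k\to\infty}\sum_{\alpha\in\mathcal{A}}\mu^{(k)}(\mathcal{S}_{\alpha})
=\lim_{k\to\infty}\mu^{(k)}\!\left(\bigcup_{\alpha\in\mathcal{A}}\mathcal{S}_{\alpha}\right).
\end{equation*}
The right-hand side exists (as the left-hand side is a finite sum of existing limits), which simultaneously establishes the existence of $\mu(\bigcup_{\alpha\in\mathcal{A}}\mathcal{S}_{\alpha})$ and gives its value as $\sum_{\alpha\in\mathcal{A}}\mu(\mathcal{S}_{\alpha})$, as desired.

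There is no substantive obstacle here. The only subtlety worth flagging is the finiteness of $\mathcal{A}$: if $\mathcal{A}$ were infinite the interchange of $\sum$ and $\lim$ would fail in general (it is a countable-versus-limit issue, not a measure-theoretic one, since we have not built $\mu$ into a genuine countably additive measure on $\mathbb{S}$). The hypothesis $|\mathcal{A}|<\infty$ sidesteps this entirely, and no appeal to the Chinese remainder theorem, to Theorem \ref{thm:p^s}, or to any structural information about SNFs is needed—the proof uses only the definition of $\mu$ and elementary properties of limits.
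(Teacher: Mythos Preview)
Your proposal is correct and is exactly the approach the paper intends: the paper does not spell out a separate proof for Theorem~\ref{thm:mu(union)} but simply states that the properties of $\lambda$ in Section~\ref{sec:properties of lambda} carry over to SNFs, and the proof of Theorem~\ref{thm:lambda(union)} is precisely the finite-additivity-plus-interchange-of-finite-sum-and-limit argument you wrote out.
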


\begin{theorem}\label{thm:mu(subtraction)}
Suppose that $\mathcal{S}'\subseteq \mathcal{S}\subseteq \mathbb{S}$ such that $\mu(\mathcal{S}')$ and $\mu(\mathcal{S})$ both exist, then
\begin{equation*}
\mu( \mathcal{S}\setminus \mathcal{S}') =\mu(\mathcal{S})-\mu(\mathcal{S}')\,.
\end{equation*}
In particular for the complement $\mathcal{S}^c$ of $\mathcal{S}$ in $\mathbb{S}$, we have
\begin{equation*}
\mu( \mathcal{S}^c) =1-\mu(\mathcal{S})\,.
\end{equation*}
\end{theorem}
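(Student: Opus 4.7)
The plan is to mirror exactly the proof of Theorem \ref{thm:lambda(subtraction)}, since the argument there uses nothing more than the linearity of limits combined with the obvious finite additivity of $\mu^{(k)}$ as a probability distribution on $\mathbb{S}$. I would first note that for each fixed $k$, the map $\mathcal{T} \mapsto \mu^{(k)}(\mathcal{T})$ is just the probability under the uniform distribution on $n\times m$ matrices with entries in $\mathbb{Z}_{(k)}$ of the event $\{\mathrm{SNF}(M)\in \mathcal{T}\}$, so for any $\mathcal{S}'\subseteq \mathcal{S}\subseteq \mathbb{S}$ we have the pointwise identity
\[
\mu^{(k)}(\mathcal{S}\setminus \mathcal{S}') = \mu^{(k)}(\mathcal{S}) - \mu^{(k)}(\mathcal{S}').
\]

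Next I would take $k\to \infty$. By hypothesis both $\lim_{k\to\infty}\mu^{(k)}(\mathcal{S})=\mu(\mathcal{S})$ and $\lim_{k\to\infty}\mu^{(k)}(\mathcal{S}')=\mu(\mathcal{S}')$ exist, so the right-hand side of the identity above converges to $\mu(\mathcal{S})-\mu(\mathcal{S}')$. Hence the left-hand side converges as well, which by Definition \ref{def:mu} is exactly the assertion that $\mu(\mathcal{S}\setminus \mathcal{S}')$ exists and equals $\mu(\mathcal{S}) - \mu(\mathcal{S}')$.

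For the statement about complements, I would specialize to $\mathcal{S}' = \mathcal{S}$ and $\mathcal{S} = \mathbb{S}$. Here I need the auxiliary fact that $\mu(\mathbb{S})=1$, which is immediate: every integer matrix has a well-defined SNF lying in $\mathbb{S}$ (using the convention adopted in the paper that the zero matrix is its own SNF), so $\mu^{(k)}(\mathbb{S})=1$ for every $k$, and hence $\mu(\mathbb{S})=1$. Applying the just-established subtraction formula to $\mathcal{S}\subseteq \mathbb{S}$ yields $\mu(\mathcal{S}^c) = \mu(\mathbb{S})-\mu(\mathcal{S}) = 1-\mu(\mathcal{S})$.

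There is no substantive obstacle here; the result is a formal consequence of the linearity of the limit plus the fact that for each $k$ the quantities $\mu^{(k)}$ behave as a genuine probability measure on a finite event space. The only detail that is worth spelling out explicitly (and that the reader might otherwise wonder about) is the existence of $\mu(\mathbb{S})$, and that is handled by the trivial observation above.
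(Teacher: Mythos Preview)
Your proposal is correct and matches the paper's intended argument: the paper does not write out a separate proof for this theorem but simply states that the properties of $\lambda$ in Section~\ref{sec:properties of lambda} ``carry over to SNFs,'' and your proof is precisely the translation of the proof of Theorem~\ref{thm:lambda(subtraction)} to the $\mu$ setting. Your added remark that $\mu(\mathbb{S})=1$ (needed for the complement statement) is a welcome clarification the paper leaves implicit.
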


\begin{theorem}\label{thm:subset}
Suppose that $\mathcal{T}\in \mathbb{S}$ such that $\mu(\mathcal{T})=0$, then for any $\mathcal{S}\subseteq \mathcal{T}$, we also have $\mu(\mathcal{S})=0$\,.
\end{theorem}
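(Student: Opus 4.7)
The plan is to mimic the proof of Theorem \ref{thm:lambda(subset)=0} verbatim, since the statement is the SNF analogue of that gcd-density result and the argument uses only monotonicity of $\mu^{(k)}$ under set inclusion together with the squeeze principle. Nothing specific to the gcd formalism is invoked there, so the argument transports.

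Concretely, I would first observe the elementary monotonicity $\mu^{(k)}(\mathcal{S})\le \mu^{(k)}(\mathcal{T})$ for every $k$: by Definition \ref{def:mu}, the event $\{\mathrm{SNF}(M)\in\mathcal{S}\}$ is contained in the event $\{\mathrm{SNF}(M)\in\mathcal{T}\}$ for a matrix $M$ with entries uniformly drawn from $\mathbb{Z}_{(k)}$, so the probabilities are ordered. Coupled with the trivial lower bound $\mu^{(k)}(\mathcal{S})\ge 0$, this yields
\begin{equation*}
0\le \liminf_{k\to\infty}\mu^{(k)}(\mathcal{S})\le \limsup_{k\to\infty}\mu^{(k)}(\mathcal{S})\le \limsup_{k\to\infty}\mu^{(k)}(\mathcal{T})=\mu(\mathcal{T})=0,
\end{equation*}
where the last equality uses the hypothesis that $\mu(\mathcal{T})$ exists and equals zero.

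Hence the $\liminf$ and $\limsup$ of $\mu^{(k)}(\mathcal{S})$ coincide and both equal zero, so $\lim_{k\to\infty}\mu^{(k)}(\mathcal{S})$ exists and equals $0$. By Definition \ref{def:mu} this says $\mu(\mathcal{S})=0$, as desired. There is no serious obstacle here; the only point of care is to note that we are \emph{not} assuming the existence of $\mu(\mathcal{S})$ as a hypothesis, so the argument must produce existence as part of its conclusion, which is precisely what the squeeze delivers.
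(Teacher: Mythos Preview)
Your proof is correct and is exactly the approach the paper intends: the paper states Theorem~\ref{thm:subset} as the SNF analogue of Theorem~\ref{thm:lambda(subset)=0} without writing out a separate proof, precisely because the squeeze argument you give transports verbatim from the $\lambda$ setting to the $\mu$ setting.
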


\subsection{\texorpdfstring{Connection between $\mu$ and $\mu_{p^s}$}{Connection between mu and mu{\_}\{p{\^{}}s\}}}
$ $

Theorem \ref{thm:lambda(Z)} has an analogue for SNFs as well, by virtue of the following well-known lemma (see \cite[Theorem 61.1]{B} for an easy proof).

\begin{lemma}\label{lemma:irred}
Fix a positive integer $r$. The determinant of an $r\times r$ matrix as a polynomial of its $r^2$ entries $x_1,x_2,\dots,x_{r^2}$ is irreducible in $\mathbb{Q}[x_1,x_2,\dots,x_{r^2}]$\,.
\end{lemma}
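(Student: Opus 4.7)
The plan is to prove irreducibility by exploiting multilinearity of the determinant. By cofactor expansion along column $j$, one has $\det(X) = \sum_{i=1}^{r}(-1)^{i+j} x_{ij} M_{ij}$, and each cofactor $M_{ij}$ is a polynomial in the variables outside column $j$. Hence if $V_j := \{x_{1j},\ldots,x_{rj}\}$ denotes the column-$j$ variable set, the maximum combined degree in $V_j$ of any monomial of $\det(X)$ is exactly $1$; the analogous statement holds for each row variable set $W_i := \{x_{i1},\ldots,x_{ir}\}$. This will be the essential input.

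Next, I would assume for contradiction that $\det(X) = f\cdot g$ with $f,g\in\mathbb{Q}[x_1,\ldots,x_{r^2}]$ both non-constant. For a variable set $V$ and a polynomial $h$, write $d_V(h)$ for the maximum combined $V$-degree of a monomial of $h$. Isolating the top-$V$-degree homogeneous components of $f$ and of $g$ and using that $\mathbb{Q}[x_1,\ldots,x_{r^2}]$ is an integral domain yields the standard identity $d_V(fg) = d_V(f) + d_V(g)$. Applying this with $V = V_j$ and $V = W_i$, the constraint $d_V(\det(X)) = 1$ forces, for each column $j$ (respectively each row $i$), exactly one of $f,g$ to contain all variables of $V_j$ (respectively of $W_i$), while the other contains none of them.

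I would then call column $j$ \emph{$f$-owned} if $d_{V_j}(f) = 1$ and \emph{$g$-owned} otherwise, and define row ownership analogously, producing partitions $C_f \sqcup C_g$ of the column indices and $R_f \sqcup R_g$ of the row indices. For any $j \in C_f$ and $i \in R_g$, the entry $x_{ij}$ would simultaneously lie in $V_j$ (hence only in $f$) and in $W_i$ (hence only in $g$), a contradiction; so $C_f = \emptyset$ or $R_g = \emptyset$. But $f$ non-constant means $f$ involves some $x_{ij}$, forcing $j \in C_f$; similarly $g$ non-constant makes $R_g$ non-empty. These conclusions together contradict the previous alternative, finishing the argument.

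The only step that needs any genuine care is the multiplicativity $d_V(fg) = d_V(f) + d_V(g)$; this is immediate once one passes to the top-$V$-degree homogeneous components and invokes the absence of zero divisors in $\mathbb{Q}[x_1,\ldots,x_{r^2}]$. The remaining steps are combinatorial bookkeeping on the ownership partitions, so I do not anticipate a serious obstacle.
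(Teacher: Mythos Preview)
Your argument is correct. Note, however, that the paper does not actually supply its own proof of this lemma: it merely cites B\^ocher's \emph{Introduction to Higher Algebra} for ``an easy proof'' and moves on. What you have written is a self-contained version of the classical argument (and is essentially the proof in B\^ocher): exploit that $\det(X)$ has total degree exactly $1$ in each column-variable set $V_j$ and each row-variable set $W_i$, use multiplicativity of this grading over an integral domain to force an ownership partition of rows and columns between the two putative factors, and then observe that any entry $x_{ij}$ with its row and column owned by different factors cannot appear in either factor, contradicting the fact that it appears in $\det(X)$.

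Two minor points of polish. First, from $d_{V_j}(f)+d_{V_j}(g)=1$ you only directly get that one factor contains \emph{none} of the $V_j$-variables; the stronger claim that the other contains \emph{all} of them is true but uses additionally that every $x_{ij}$ actually occurs in $\det(X)$. Your later steps only need the ``none'' half, so this does not affect the logic. Second, in the final contradiction (``only in $f$'' versus ``only in $g$'') you are tacitly using that $x_{ij}$ genuinely occurs in $\det(X)$; this is of course true (any permutation passing through $(i,j)$ contributes a monomial containing $x_{ij}$), but it is worth stating explicitly, since otherwise ``appears in neither $f$ nor $g$'' is not yet a contradiction.
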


For any $i\le m\wedge(n-1)$ (i.e., $\min\,  \{m,n-1\}$, which is $m$ if $m<n$, and  $n-1$ if $m=n$, recalling that $m\le n$), the set $U_i$ contains at least two different minors, which are both irreducible as polynomials of the entries on the strength of Lemma \ref{lemma:irred} and therefore relatively prime. Hence Assumption \ref{asmp:coprime} holds with $w=m\wedge(n-1)$. This allows us to apply Theorem \ref{thm:lambda(Z)} to SNFs and obtain the following analogue. In addition, we will compute the density $\mu(\mathcal{S})$ explicitly later in Section \ref{sec:mu(S)}.

\begin{theorem}\label{thm:mu(S)}
Given positive integers $r\le m\wedge(n-1)$ and $d_1\, | d_2 \,| \cdots | \,d_r$, let $z=\prod_{j=1}^{\infty} p_j^{s_j}$ with $p_j$ the $j$-th smallest prime and $s_j$ nonnegative integers, $j=1,2,\dots$ such that $d_r\,|\,z$,
then the probability $\mu(\mathcal{S})$ exists for 
\begin{equation}\label{eq:S}
\mathcal{S}=\left\{D:=(D_{i,j})_{n\times m} \in \mathbb{S}:\ D_{i,i}=d_i\,,\  \forall\ i\le r\right\},
\end{equation} 
and in fact 
\begin{equation}\label{eq:mu(S)}
\mu(\mathcal{S}) = \prod_{j=1}^{\infty} \mu_{p_j^{s_{j}+1}}(\mathcal{S})\,.
\end{equation}
\end{theorem}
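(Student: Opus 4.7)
The strategy is to translate Theorem~\ref{thm:mu(S)} into an instance of Theorem~\ref{thm:lambda(Z)} via the dictionary between the SNF of $M$ and the gcds of its minors. Recall that $D_{i,i} = g_i(M)/g_{i-1}(M)$ (with $g_0 := 1$), where $g_i(M)$ is the gcd of all $i \times i$ minors of $M$. Setting $y_i := d_1 d_2 \cdots d_i$ and $w := m \wedge (n-1)$, the event $\mathrm{SNF}(M) \in \mathcal{S}$ is exactly the event
\[
(g_1(M), g_2(M), \ldots, g_w(M)) \in \mathcal{Z}, \qquad \mathcal{Z} := \{(z_1, \ldots, z_w) \in \mathbb{Z}_+^w : z_i = y_i \text{ for all } i \le r\},
\]
which is precisely of the form \eqref{eq:Z}. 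The same correspondence persists after reduction modulo any $p^s$, because the ``up to units'' equivalence on $\mathrm{SNF}(M)$ from Definition~\ref{def:mu} is the componentwise equivalence on $(g_1, \ldots, g_w)$ from Definition~\ref{def:lambda}. Consequently $\mu(\mathcal{S}) = \lambda(\mathcal{Z})$ and $\mu_{P_\mathcal{P}}(\mathcal{S}) = \lambda_{P_\mathcal{P}}(\mathcal{Z})$ for every admissible modulus $P_\mathcal{P}$.

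Next I verify Assumption~\ref{asmp:coprime} for this setup, with $d = nm$ variables, $F_1, \ldots, F_h$ the complete list of minors of an $n \times m$ matrix, and $U_i$ the collection of $i \times i$ minors for $1 \le i \le w$. The constraint $r \le m \wedge (n-1) = w$ guarantees $w \ge 1$, and for each $i \le w$ there are at least two distinct $i \times i$ sub-matrices (since $w \le m$ and $w < n$), so $U_i$ contains two distinct minors. By Lemma~\ref{lemma:irred} every such minor is irreducible in $\mathbb{Q}[x_1, \ldots, x_{nm}]$, and two distinct irreducibles are coprime; thus $\gcd(U_i) = 1$ in $\mathbb{Q}[x_1, \ldots, x_{nm}]$ for every $i \le w$, and a fortiori $\gcd(F_1, \ldots, F_h) = 1$. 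This is precisely Assumption~\ref{asmp:coprime}.

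Finally I apply Theorem~\ref{thm:lambda(Z)} to $\mathcal{Z}$, choosing $y$ to be a multiple of $y_r = d_1 \cdots d_r$ large enough that $y_i \mid y$ for every $i \le r$ (which matches the hypothesis of that theorem). It yields both the existence of $\lambda(\mathcal{Z})$ and the product formula $\lambda(\mathcal{Z}) = \prod_{j=1}^\infty \lambda_{p_j^{s_j+1}}(\mathcal{Z})$. Translating back through $\mu(\mathcal{S}) = \lambda(\mathcal{Z})$ and $\mu_{p^s}(\mathcal{S}) = \lambda_{p^s}(\mathcal{Z})$ gives exactly \eqref{eq:mu(S)}. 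I expect no real obstacle beyond the bookkeeping of the SNF--gcd dictionary: all the analytic content (the Poonen-type sieve bound and the local-to-global passage) is already packaged into Theorem~\ref{thm:lambda(Z)}, and the one nontrivial ingredient needed to feed into it, namely the irreducibility of the individual minors, is supplied by Lemma~\ref{lemma:irred}.
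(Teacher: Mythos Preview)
Your approach is exactly the paper's: reduce to Theorem~\ref{thm:lambda(Z)} via the SNF--gcd dictionary, after checking Assumption~\ref{asmp:coprime} by way of Lemma~\ref{lemma:irred}. One caveat worth flagging: your blanket assertion that $\mu_{P_{\mathcal P}}(\mathcal S)=\lambda_{P_{\mathcal P}}(\mathcal Z)$ for \emph{every} admissible modulus is not literally true---over $\mathbb Z/p^{s}\mathbb Z$ the tuple $(g_1,\dots,g_w)$ determines the first $r$ invariant factors only when $s>v_p(y_r)=v_p(d_1\cdots d_r)$, and for smaller $s$ the event $\{g\in\mathcal Z\pmod{p^{s}}\}$ can strictly contain $\{\mathrm{SNF}\in\mathcal S\pmod{p^{s}}\}$. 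This does not damage your argument, because the exponents $s_j+1$ coming from your $y$ (a multiple of $y_r$) always satisfy $s_j+1>v_{p_j}(y_r)$, so the identification holds at every place you actually use it. It does mean, though, that what you have directly proved is \eqref{eq:mu(S)} for $z$ with $y_r\mid z$, whereas the theorem as stated allows the weaker hypothesis $d_r\mid z$; closing that gap needs the easy extra remark that $\mu_{p^{s+1}}(\mathcal S)$ is already determined by $M\bmod p^{v_p(d_r)+1}$ and hence is constant for $s\ge v_p(d_r)$---a point the paper itself also leaves implicit.
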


\begin{remark}\label{rmk:mu0}
{\rm
(i) The right-hand side of \eqref{eq:mu(S)} is well-defined since $\mu_{p^s}(\cdot)\in [0,1]$ for all $p$ and $s$.

(ii) We have assumed that $r\le m\wedge(n-1)$; 
in fact, we have $\mu(\mathcal{S})=0$ otherwise. Recall that $m\le n$ and note that $r\le m$, thus in the case of $r> m\wedge(n-1)$, we must have $r=m=n$. 
As a result, any matrix $M$ with $\mathrm{SNF}(M)\in \mathcal{S}$ satisfies $|M|=\pm d_n$\,. 
We will show later that  the probability that the determinant of a random $n\times n$ integer matrix equals $\pm c$ is $0$ for all constant $c$ (Theorem \ref{thm:det}). 

(iii) We have also assumed that the $d_i$'s are positive; in fact, we have $\mu(\mathcal{S})=0$  otherwise. If $d_i=0$ for some $i$, then all $i\times i$ minors of any matrix $M$ with $\mathrm{SNF}(M)\in \mathcal{S}$ are zero. Applying Theorem \ref{thm:det} to $c=0$ yields the desired result. }
\end{remark}

\section{Applications}\label{sec:app}
Now we apply Theorems \ref{thm:p^s} and \ref{thm:mu(S)}  to compute the density $\mu$ explicitly for the following subsets of $\mathbb{S}$: 
matrices with first few diagonal entries given (i.e., with the form of \eqref{eq:S}),
full rank matrices,
a finite subset, 
matrices with diagonal entries all equal to $1$,
and square matrices with at most  $\ell\,(=1,2,\dots,n)$ diagonal entries not equal to $1$.

\subsection{\texorpdfstring{Density of the Set \eqref{eq:S}}{Density of the Set (3.4)}}\label{sec:mu(S)}
$ $

For the set $\mathcal{S}$ of \eqref{eq:S}, i.e., of matrices with first $r$ diagonal entries given, 
we take $z=d_r$ in Theorem \ref{thm:mu(S)}, then it suffices to compute $\mu_{p^{s+1}}(\mathcal{S})$ for each $(p,s)=(p_j,s_j)$. 
In  ${\rm{mod}}\ p^{s+1}$, the set $\mathcal{S}$ has $m-r+1$ elements (see \eqref{eq:S=} below). 
Further, since formula \eqref{eq:mu_{p^s}(D)} gives the density $\mu_{p^{s+1}}$ of each element of $\mathcal{S}$, 
one can take the sum over  $\mathcal{S}$ to get an expression for $\mu_{p^{s+1}}(\mathcal{S})$ (Theorem \ref{thm:p^s}), and compute this sum explicitly when $m-r$ is small, such as in Theorems \ref{thm:1} and \ref{thm:cyclic SNF} below.
However, this sum is hard to compute when $m-r$ is large, for example, when $m$ is large and $r$ is fixed; 
in this case, we recast $\mathcal{S}$ as the difference between a subset of $\mathbb{S}$ and the union of other $r-1$ subsets such that for each of these $r$ sets, its density $\mu_{p^{s+1}}$  is given directly by \eqref{eq:mu_{p^s}(D)}. 

We work out two examples to illustrate this idea,  and then deal with the general case.

\subsubsection{The First Example: Relatively Prime Entries}
$ $

Our approach reproduces the following result mentioned at the beginning of this paper.
\begin{theorem}
Let $\mathcal{S}$ be the set of \eqref{eq:S} with $r=1$ and $d_1=1$, then we have\begin{equation}\label{eq:mu(1)}
\mu(\mathcal{S})
=\frac{1}{\zeta(nm)}\,,
\end{equation}
where $\zeta(\cdot)$ is the Riemann zeta function.
\end{theorem}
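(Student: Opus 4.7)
The plan is to specialize Theorem \ref{thm:mu(S)} to $r=1$ and $d_1=1$. Taking $z=1$ forces every $s_j=0$, so the product \eqref{eq:mu(S)} collapses to
$$\mu(\mathcal{S})=\prod_{p}\mu_{p}(\mathcal{S}),$$
reducing the problem to computing the single-prime density $\mu_p(\mathcal{S})$ for each prime $p$.

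Next I would identify $\mathcal{S}$ modulo $p$. The first invariant factor $d_1$ is the gcd of the $1\times 1$ minors of $M$, i.e.\ the gcd of the $nm$ entries of $M$. Hence $\mathrm{SNF}(M)\in\mathcal{S}$ modulo $p$ (up to units) if and only if at least one entry of $M$ is a unit modulo $p$. The complementary event is that all $nm$ entries are divisible by $p$, which for a uniformly random matrix over $\mathbb{Z}/p\mathbb{Z}$ occurs with probability $p^{-nm}$. Therefore
$$\mu_p(\mathcal{S})=1-p^{-nm}.$$

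Plugging into the product and applying Euler's product formula for the Riemann zeta function, which converges since $nm\ge 2$ under the hypothesis $r=1\le m\wedge(n-1)$, yields
$$\mu(\mathcal{S})=\prod_{p}\bigl(1-p^{-nm}\bigr)=\frac{1}{\zeta(nm)},$$
as claimed. There is no serious obstacle: the argument is a direct specialization of Theorem \ref{thm:mu(S)}, and the only substantive step is recognizing that the condition ``$d_1\equiv 1\pmod p$ up to a unit'' translates to the entrywise statement ``not all entries are divisible by $p$''. Alternatively, the same conclusion follows from Theorem \ref{thm:coprime}(b)(i) applied to the $nm$ coordinate polynomials $F_j=x_j$, which are trivially pairwise coprime in $\mathbb{Q}[x_1,\dots,x_{nm}]$ and whose gcd equals $d_1$.
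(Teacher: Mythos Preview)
Your proof is correct and follows essentially the same route as the paper: specialize Theorem~\ref{thm:mu(S)} with $r=1$, $d_1=z=1$, $s_j=0$ to reduce to $\prod_p\mu_p(\mathcal{S})$, compute $\mu_p(\mathcal{S})=1-p^{-nm}$ by identifying the complementary event, and finish with the Euler product. The only cosmetic difference is that the paper obtains $\mu_p(\{D_{(0)}\})=p^{-nm}$ by plugging $s=1$, $a_1=0$ into formula~\eqref{eq:mu_{p^s}(D)}, whereas you compute it directly from the observation that $D_{(0)}$ corresponds to every entry being divisible by $p$; these are the same computation.
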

\begin{proof}
Applying Theorem \ref{thm:mu(S)} with $r=1$\,, $d_1=1$ and $s_j=0$\,, $j=1,2,\dots$ gives
\begin{equation} \label{eq:prod mu_p}
\mu(\mathcal{S})
=\prod_{p}\mu_{p}(\mathcal{S})\,,
\end{equation}
therefore it reduces to computing $\mu_{p}(\mathcal{S})$ for each $p$.

Recall the equivalence of multiplication by units, therefore we only have two choices for matrix entries  in ${\rm{mod}}\ p$\,: $1$ and $0$.
The set $\mathcal{S}$ $({\rm{mod}}\ p)$  consists of all the matrices in $\mathbb{S}$ whose first diagonal entry is $1$, 
thus $\mathcal{S}=\{D_{\bm{a}}: \bm{a}=(a_1)\ge 1\}$ $({\rm{mod}}\ p)$ 
(recall from Theorem \ref{thm:p^s} that ${\bm a}:=(a_1,a_2,\dots,a_s)$ and that $D_{\bm a}\in \mathbb{S}$ is the diagonal matrix with exactly $a_i$ non-$p^i$-multiples on its diagonal).
Therefore
\begin{equation*} 
\mu_{p}(\mathcal{S})
=1-\mu_{p}(\{D_{(0)}\})\,.
\end{equation*}
We apply \eqref{eq:mu_{p^s}(D)} to get $\mu_{p}(\{D_{(0)}\})=p^{-nm}$, thus $\mu_{p}(\mathcal{S})=1-p^{-nm}$. 
Plugging into \eqref{eq:prod mu_p} along with the Euler product formula
\begin{equation}\label{eq:Euler}
\prod_{p} \left( 1-p^{-i}\right) =\frac{1}{\zeta(i)}\in(0,1)\,,\ \ \forall \ i\ge 2
\end{equation} 
yields \eqref{eq:mu(1)}.
\end{proof}

\subsubsection{Another Example}

\begin{theorem}\label{thm:mu(2,6)}
Let $\mathcal{S}$ be the set of \eqref{eq:S} with $r=2$, $d_1=2$ and $d_2=6$, then we have
\begin{equation}\label{eq:mu(2,6)}
\mu(\mathcal{S})
=\mu_{2^2}(\mathcal{S})\,\mu_{3^2}(\mathcal{S})\prod_{p>3}\mu_{p}(\mathcal{S})\,,
\end{equation}
where
\begin{equation}\label{eq:mu_4}
\mu_{2^2}(\mathcal{S})
=2^{-nm}  \left( 1 - 2^{-nm} - 2^{-(n-1)(m-1)} \cdot \frac{\left( 1-2^{-n}\right) \left( 1-2^{-m}\right)}  {1-2^{-1}}\right) ,
\end{equation}
\begin{equation}\label{eq:mu_9}
\mu_{3^2}(\mathcal{S})
=3^{-(n-1)(m-1)}  \left( 1 - 3^{-(n-1)(m-1)}\right) \frac{\left( 1-3^{-n}\right) \left( 1-3^{-m}\right)} {1-3^{-1}}\,,
\end{equation}
\begin{equation}\label{eq:mu_p}
\mu_{p}(\mathcal{S})
=1-p^{-nm} - p^{-(n-1)(m-1)}\cdot \frac {\left( 1-p^{-n}\right) \left( 1-p^{-m}\right)}{1-p^{-1}} =1 -\sum_{(n-1)(m-1)}^{(n-1)m}p^{-i}  +
\sum_{n(m-1)+1}^{nm-1} p^{-i}\,.
\end{equation}
\end{theorem}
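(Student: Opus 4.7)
The plan is to apply Theorem \ref{thm:mu(S)} to reduce $\mu(\mathcal{S})$ to an Euler product, and then evaluate each local factor by combining the singleton formula \eqref{eq:mu_{p^s}(D)} of Theorem \ref{thm:p^s} with the subtraction formula of Theorem \ref{thm:mu(subtraction)}. With $r=2$, $d_1=2$, $d_2=6$, one takes $z=6=2\cdot 3$ and hence $s_1=s_2=1$, $s_j=0$ for $j\ge 3$, and \eqref{eq:mu(S)} gives \eqref{eq:mu(2,6)} at once. What remains is to compute $\mu_p(\mathcal{S})$ for $p>3$, $\mu_{2^2}(\mathcal{S})$, and $\mu_{3^2}(\mathcal{S})$.

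My first step would be to translate the condition ``$d_1=2$, $d_2=6$'' into a condition on the $D_{\bm a}$ notation of Theorem \ref{thm:p^s} modulo each relevant prime power, up to units. For $p>3$, both $2$ and $6$ are units mod $p$, so the condition collapses to $d_1=d_2=1$, i.e., $\mathcal{S}\equiv\{D_{(a_1)}:a_1\ge 2\}\pmod{p}$. For $p=3$ modulo $9$, $2$ is still a unit but $6=2\cdot 3$, so the condition becomes $d_1=1$, $d_2=3$, giving $\mathcal{S}\equiv\{D_{(1,a_2)}:2\le a_2\le m\}\pmod{9}$. For $p=2$ modulo $4$, both $2$ and $6$ equal $2\cdot(\text{unit})$, so the condition becomes $d_1=d_2=2$, giving $\mathcal{S}\equiv\{D_{(0,a_2)}:2\le a_2\le m\}\pmod{4}$.

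With these identifications in hand, each factor would then follow from a two-term subtraction. For $p>3$, since $\mathbb{S}$ modulo $p$ is the disjoint union $\{D_{(a_1)}:0\le a_1\le m\}$, one has $\mu_p(\mathcal{S})=1-\mu_p(\{D_{(0)}\})-\mu_p(\{D_{(1)}\})$; plugging in \eqref{eq:mu_{p^s}(D)} produces the first form of \eqref{eq:mu_p}, and the second form follows by telescoping $(1-p^{-n})/(1-p^{-1})=\sum_{i=0}^{n-1}p^{-i}$ and re-indexing the geometric sums. For $p=2$, I would write $\mathcal{S}=\{a_1=0\}\setminus(\{D_{(0,0)}\}\cup\{D_{(0,1)}\})$ mod $4$; the event $\{a_1=0\}$ is simply that every matrix entry is even, with probability $2^{-nm}$, and evaluating the two singletons by \eqref{eq:mu_{p^s}(D)} and factoring out $2^{-nm}$ yields \eqref{eq:mu_4}. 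For $p=3$, write $\mathcal{S}=\{a_1=1\}\setminus\{D_{(1,1)}\}$ mod $9$; since the event $\{a_1=1\}$ depends only on the mod-$3$ reduction of the matrix, which is uniform on $(\mathbb{Z}/3\mathbb{Z})^{nm}$, one obtains the marginalization identity $\mu_{3^2}(\{a_1=1\})=\mu_3(\{D_{(1)}\})$, and applying \eqref{eq:mu_{p^s}(D)} to both pieces and factoring yields \eqref{eq:mu_9}.

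The main delicate point is the bookkeeping in the reduction step: one must correctly track the unit-equivalence classes of $\mathcal{S}$ in each modulus, noting in particular that $2$ is a unit mod $9$ but not mod $4$, so that the ``$3$-part'' of $6$ remains genuine mod $9$ while the ``$2$-part'' of both $2$ and $6$ remains genuine mod $4$. The marginalization identity $\mu_{3^2}(\{a_1=1\})=\mu_3(\{D_{(1)}\})$ also deserves care, although it is essentially immediate from the uniformity of the mod-$3$ reduction. Once these reductions are set up, everything else is a mechanical substitution into \eqref{eq:mu_{p^s}(D)} followed by elementary algebraic simplification.
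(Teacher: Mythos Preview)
Your proposal is correct and follows essentially the same approach as the paper: apply Theorem~\ref{thm:mu(S)} with $z=6$ to obtain the Euler product \eqref{eq:mu(2,6)}, identify $\mathcal{S}$ modulo each relevant prime power in the $D_{\bm a}$ notation, and evaluate each local factor by subtracting off the singleton densities given by \eqref{eq:mu_{p^s}(D)}, using the marginalization $\mu_{p^2}(\{a_1=c\})=\mu_p(\{D_{(c)}\})$ for the $p=2,3$ cases. One small remark: the subtraction you need is for $\mu_{p^s}$ on a finite probability space, which is immediate, rather than Theorem~\ref{thm:mu(subtraction)} (which concerns the density $\mu$ over $\mathbb{Z}$).
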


\begin{proof}
The first equation \eqref{eq:mu(2,6)} follows directly from Theorem \ref{thm:mu(S)} with $r=2$\,, $d_1=2$\,, $d_2=z=6$\,, $s_1=s_2=1$\,, $s_j=0$\,, $j\ge 3$\,.
Therefore it reduces to calculating $\mu_{p^s}(\mathcal{S})$ for $(p,s)=(2,2)$, $(3,2)$ and $(p,1)$ with $p>3$\,.

\smallskip
\noindent
{\it Case 1}.
$p>3$ and $s=1$\,.

Recall the equivalence of multiplication by units, therefore we only have two choices for matrix entries  in ${\rm{mod}}\ p$\,: $1$ and $0$.
The set $\mathcal{S}$ $({\rm{mod}}\ p)$  consists of all the matrices in $\mathbb{S}$ whose first two diagonal entries are  $1$, 
thus $\mathcal{S}=\{D_{\bm{a}}: \bm{a}=(a_1)\ge 2\}$ $({\rm{mod}}\ p)$ 
(recall ${\bm a}$ again from Theorem \ref{thm:p^s}).
Therefore
\begin{equation*} 
\mu_{p}(\mathcal{S})
=1-\mu_{p}(\{D_{(0)}\}) -\mu_{p}(\{D_{(1)}\})\,.
\end{equation*}
We then apply \eqref{eq:mu_{p^s}(D)} to get $\mu_{p}(\{D_{(0)}\})$ and $\mu_{p}(\{D_{(1)}\})$, and  \eqref{eq:mu_p} follows.

\smallskip
\noindent
{\it Case 2}.
$p=2$ and $s=2$\,.

We have three choices for matrix entries  in ${\rm{mod}}\ 2^2$: $1$, $2$ and $0$. The set $\mathcal{S}$ $({\rm{mod}}\ 2^2)$  consists of all the  matrices in $\mathbb{S}$ whose first two diagonal entries are $2$, thus $\mathcal{S}=\{D_{\bm{a}=(a_1,a_2)}: a_1=0\,,\, a_2\ge 2\}$ $({\rm{mod}}\ 2^2)$. Therefore
\begin{equation}\label{eq:mu_4=}
\mu_{2^2}(\mathcal{S})
=\mu_{2^2}(\{D_{(a_1,a_2)}: a_1=0\}) -\mu_{2^2}(\{D_{(0,0)}\})
-\mu_{2^2}(\{D_{(0,1)}\})\,.
\end{equation}
Notice that the set $\{D_{(a_1,a_2)}: a_1=0\}$ $({\rm{mod}}\ 2^2)$ 
consists of all the matrices in $\mathbb{S}$ whose diagonal entries are all multiples of $2$ (i.e., either $2$ or $0$); 
in other words, in ${\rm{mod}}\ 2$, it contains only one element -- the zero matrix. Hence
\begin{equation*}
\mu_{2^2}(\{D_{(a_1,a_2)}: a_1=0\})
=\mu_{2}(\{D_{(0)}\})\,.
\end{equation*}
Plugging into \eqref{eq:mu_4=} and applying \eqref{eq:mu_{p^s}(D)} to get  $\mu_{2}(\{D_{(0)}\})$, $\mu_{2^2}(\{D_{(0,0)}\})$ and $\mu_{2^2}(\{D_{(0,1)}\})$, we obtain \eqref{eq:mu_4}.

\smallskip
\noindent
{\it Case 3}.
$p=3$ and $s=2$\,.

We have three choices for matrix entries  in ${\rm{mod}}\ 3^2$: $1$, $3$ and $0$.
The set $\mathcal{S}$ $({\rm{mod}}\ 3^2)$  consists of all the  matrices in $\mathbb{S}$ whose first two diagonal entries are $1$ and $3$, respectively, thus $\mathcal{S}=\{D_{\bm{a}=(a_1,a_2)}: a_1=1\,,\, a_2\ge 2\}$ $({\rm{mod}}\ 3^2)$.
Therefore
\begin{equation}\label{eq:mu_9=}
\mu_{3^2}(\mathcal{S})
=\mu_{3^2}(\{D_{(a_1,a_2)}: a_1=1\}) -\mu_{3^2}(\{D_{(1,1)}\})\,.
\end{equation}
Notice that the set $\{D_{(a_1,a_2)}: a_1=1\}$ $({\rm{mod}}\ 3^2)$ 
consists of all the matrices in $\mathbb{S}$ whose first diagonal entry is $1$ and all other diagonal entries are multiples of $3$ (i.e., either $3$ or $0$); 
in other words, in ${\rm{mod}}\ 3$, it contains only one element -- the diagonal matrix whose diagonal entries are $1,0,0,\dots,0$. Hence
\begin{equation*}
\mu_{3^2}(\{D_{(a_1,a_2)}: a_1=1\})
=\mu_{3}(\{D_{(1)}\})\,.
\end{equation*}
Plugging into \eqref{eq:mu_9=} and applying \eqref{eq:mu_{p^s}(D)} to get $\mu_{3}(\{D_{(1)}\})$ and $\mu_{3^2}(\{D_{(1,1)}\})$, we obtain \eqref{eq:mu_9}.
\end{proof}

\subsubsection{The General Case}

\begin{theorem}\label{thm:mu}
Let $\mathcal{S}$ be the set of \eqref{eq:S} in Theorem \ref{thm:mu(S)}  with $d_r=\prod_{j=1}^{\infty}p_j^{s_j}$, then for $(p,s)=(p_j,s_j)$, $j=1,2,\dots$, we have
\begin{align}\label{eq:mu_{p^s}}
\nonumber \mu_{p^{s+1}}(\mathcal{S})
=&\ p^{- \sum_{i=1}^{s}(n-{\tilde{a}_i})(m-{\tilde{a}_i})} \cdot 
\frac{[p,n][p,m]}
{[p,n-{\tilde{a}_s}][p,m-\tilde{a}_s] \prod_{i=1}^{s} [p,{\tilde{a}_i}-{\tilde{a}_{i-1}}]}\\
&\ -\sum_{\ell=\tilde{a}_s}^{r-1} p^{-(n-\ell)(m-\ell)- \sum_{i=1}^{s}(n-{\tilde{a}_i})(m-{\tilde{a}_i})} \cdot 
\frac{[p,n][p,m]}
{[p,n-\ell][p,m-\ell][p,\ell-\tilde{a}_s] \prod_{i=1}^{s} [p,{\tilde{a}_i}-{\tilde{a}_{i-1}}]}\,,
\end{align}
where ${\tilde{a}_i}$ $(0\le i\le s)$ is the number of non-$p^i$-multiples among $d_1,d_2,\dots,d_r$ (thus $\tilde{a}_s\le r-1$)\,. 
In particular, when $s=0$ (which holds for all but finitely many $j$'s), we have
\begin{equation}\label{eq:mu_{p}}
\mu_{p}(\mathcal{S})
=1 -\sum_{\ell=0}^{r-1} p^{-(n-\ell)(m-\ell)} \cdot 
\frac{[p,n][p,m]}
{[p,n-\ell][p,m-\ell][p,\ell]}\,.
\end{equation}
The value of  $\mu(\mathcal{S})$ is then given by Theorem \ref{thm:mu(S)} with $z=d_r$\,.
\end{theorem}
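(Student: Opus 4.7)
The plan is to follow the decomposition strategy illustrated in the two worked examples of Theorem \ref{thm:mu(2,6)}, carried out in full generality. By Theorem \ref{thm:mu(S)} it suffices to compute $\mu_{p^{s+1}}(\mathcal{S})$ for each pair $(p,s)=(p_j,s_j)$; fix such a pair, let $t_i$ denote the $p$-adic valuation of $d_i$ so that $0\le t_1\le\cdots\le t_r\le s$, and recall $\tilde a_i=\#\{j\le r:t_j<i\}$.

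The central step is a set-theoretic identity modulo $p^{s+1}$. Using the parameterization of $\mathbb{S}\pmod{p^{s+1}}$ by diagonal matrices $D_{(a_1',\dots,a_{s+1}')}$ from Theorem \ref{thm:p^s} (applied with $s+1$ in place of $s$), I claim
\begin{equation*}
\mathcal{S}\ \equiv\ \bigl\{D_{(a_1',\dots,a_{s+1}')}:\ a_i'=\tilde a_i\ \text{for}\ 1\le i\le s,\ \ a_{s+1}'\ge r\bigr\}\pmod{p^{s+1}}.
\end{equation*}
Since SNF entries are sorted by $p$-adic valuation, fixing $a_i'=\tilde a_i$ for $i\le s$ forces the first $\tilde a_s$ diagonal entries to take precisely the valuations $t_1,\dots,t_{\tilde a_s}$ (up to units), while $a_{s+1}'\ge r$ is exactly what makes positions $\tilde a_s+1,\dots,r$ equal to $p^s$ rather than $0$, so that $D_{i,i}=d_i$ for $1\le i\le r$; the tail entries $D_{r+1,r+1},\dots,D_{m,m}$ automatically inherit divisibility by $d_r$ because their $p$-adic valuations are $\ge s\ge t_r$. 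Setting $\mathcal{T}:=\{D_{(a_1',\dots,a_{s+1}')}:a_i'=\tilde a_i,\ 1\le i\le s\}\supseteq\mathcal{S}$, this identity produces the disjoint decomposition
\begin{equation*}
\mathcal{T}\setminus\mathcal{S}\ =\ \bigsqcup_{\ell=\tilde a_s}^{r-1}\bigl\{D_{(\tilde a_1,\dots,\tilde a_s,\ell)}\bigr\},
\end{equation*}
hence by additivity of $\mu_{p^{s+1}}$ (Theorem \ref{thm:p^s}(ii)),
\begin{equation*}
\mu_{p^{s+1}}(\mathcal{S})\ =\ \mu_{p^{s+1}}(\mathcal{T})\ -\ \sum_{\ell=\tilde a_s}^{r-1}\mu_{p^{s+1}}\bigl(\{D_{(\tilde a_1,\dots,\tilde a_s,\ell)}\}\bigr).
\end{equation*}

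To evaluate $\mu_{p^{s+1}}(\mathcal{T})$, I use that SNF is compatible with the quotient $\mathbb{Z}/p^{s+1}\mathbb{Z}\to\mathbb{Z}/p^s\mathbb{Z}$ (elementary row/column operations descend), combined with the observation that a uniformly random matrix mod $p^{s+1}$ reduces to a uniformly random matrix mod $p^s$. Since $\mathcal{T}$ is exactly the preimage of $\{D_{(\tilde a_1,\dots,\tilde a_s)}\}\pmod{p^s}$, these give $\mu_{p^{s+1}}(\mathcal{T})=\mu_{p^s}(\{D_{(\tilde a_1,\dots,\tilde a_s)}\})$; when $s=0$ this ``main term'' is simply $1$ by the empty-product convention.

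Finally, applying formula \eqref{eq:mu_{p^s}(D)} of Theorem \ref{thm:p^s} to $\mu_{p^s}(\{D_{(\tilde a_1,\dots,\tilde a_s)}\})$ reproduces the main term of \eqref{eq:mu_{p^s}}, and applying it to each $\mu_{p^{s+1}}(\{D_{(\tilde a_1,\dots,\tilde a_s,\ell)}\})$ (with $s$ replaced by $s+1$ and $(a_1,\dots,a_{s+1})=(\tilde a_1,\dots,\tilde a_s,\ell)$) yields the $\ell$-th summand of the subtracted sum after routine simplification of exponents and $[p,\cdot]$-factors; the special case \eqref{eq:mu_{p}} (namely $s=0$) then follows at once. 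I expect the principal obstacle to be establishing the set-theoretic identity above: one must track the SNF sorting together with the unit-equivalence convention carefully enough to see that no additional constraints on the tail entries arise beyond $a_{s+1}'\ge r$.
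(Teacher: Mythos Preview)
Your proposal is correct and follows essentially the same approach as the paper: the same set-theoretic identity for $\mathcal{S}\pmod{p^{s+1}}$, the same decomposition $\mu_{p^{s+1}}(\mathcal{S})=\mu_{p^{s+1}}(\mathcal{T})-\sum_{\ell=\tilde a_s}^{r-1}\mu_{p^{s+1}}(\{D_{(\tilde a_1,\dots,\tilde a_s,\ell)}\})$, the same reduction $\mu_{p^{s+1}}(\mathcal{T})=\mu_{p^s}(\{D_{(\tilde a_1,\dots,\tilde a_s)}\})$, and the same application of \eqref{eq:mu_{p^s}(D)} to both pieces. Your justification of the reduction step via compatibility of SNF with the quotient $\mathbb{Z}/p^{s+1}\mathbb{Z}\to\mathbb{Z}/p^s\mathbb{Z}$ is slightly more explicit than the paper's, but the argument is the same.
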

\begin{proof} 
Recalling from Theorem \ref{thm:p^s} the notation of $D_{\bm a}$, we recast $\mathcal{S}$ as
\begin{equation}\label{eq:S=}
\mathcal{S}=\{D_{\bm{a}=(a_1,a_2,\dots,a_{s+1})}: a_i={\tilde{a}_i}\,,\, 1\le i\le s\,,\, a_{s+1}\ge r\}\ ({\rm{mod}}\ p^{s+1})\,,
\end{equation} 
and therefore
\begin{equation}\label{eq:mu_{p^s}=}
\mu_{p^{s+1}}(\mathcal{S})
=\mu_{p^{s+1}}(\{D_{\bm{a}=(a_1,a_2,\dots,a_{s+1})}: a_i={\tilde{a}_i}\,,\, 1\le i\le s\}) -\sum_{\ell=\tilde{a}_s}^{r-1} \mu_{p^{s+1}}(\{D_{({\tilde{a}_1},{\tilde{a}_2},\dots,{\tilde{a}_s},\ell)}\})\,.
\end{equation}
Notice that the set $\{D_{\bm{a}=(a_1,a_2,\dots,a_{s+1})}: a_i={\tilde{a}_i}\,,\, 1\le i\le s\}$ $({\rm{mod}}\ p^{s+1})$  in the first term on the right-hand side of \eqref{eq:mu_{p^s}=} consists of all the matrices in $\mathbb{S}$ with exactly ${\tilde{a}_i}$ $(1\le i\le s)$ non-$p^i$-multiples on its diagonal; 
in other words, in ${\rm{mod}}\ p^s$, it contains only one element -- the diagonal matrix with exactly ${\tilde{a}_i}$ non-$p^i$-multiples, i.e.,  $({\tilde{a}_i}-{\tilde{a}_{i-1}})$ $p^{i-1}$'s,  $1\le i\le s$ on its diagonal. Hence
\begin{equation}
\mu_{p^{s+1}}(\{D_{\bm{a}=(a_1,a_2,\dots,a_{s+1})}: a_i={\tilde{a}_i}\,,\, 1\le i\le s\})
=\mu_{p^s}(\{D_{({\tilde{a}_1},{\tilde{a}_2},\dots,{\tilde{a}_s})}\})\,.
\end{equation}
Plugging into \eqref{eq:mu_{p^s}=} and applying \eqref{eq:mu_{p^s}(D)} to get  $\mu_{p^s}(\{D_{({\tilde{a}_1},{\tilde{a}_2},\dots,{\tilde{a}_s})}\})$ and $\mu_{p^{s+1}}(\{D_{({\tilde{a}_1},{\tilde{a}_2},\dots,{\tilde{a}_s},\ell)}\})$, $1\le \ell\le r-1$, we obtain \eqref{eq:mu_{p^s}}.
\end{proof}

\begin{remark} 
We notice that the density $\mu_{p^s}(\{D_{\bm a}\})$ of \eqref{eq:mu_{p^s}(D)} is a polynomial of $p^{-1}$ with integer coefficients since $m-a_s+\sum_{i=1}^s (a_i-a_{i-1})=m$.
The $\mu_{p}(\mathcal{S})$ of 
\eqref{eq:mu_{p}} is also a polynomial of $p^{-1}$ with integer coefficients and with constant term $1$ (see the $\mu_{p}(\mathcal{S})$ of \eqref{eq:mu_p} as an example).
If we replace each occurrence of $p$ by $p^{\boldsymbol{z}}$, where $\boldsymbol{z}$ is a complex variable, and plug into \eqref{eq:mu(S)} of Theorem \ref{thm:mu(S)}, 
we get an Euler product for some kind of generalized zeta function. 

For instance, when $m=n=3$, for the set $\mathcal{S}$ in Theorem \ref{thm:mu(2,6)}, we apply \eqref{eq:mu_p} to get
\begin{equation*}
\mu_{p}(\mathcal{S})
=1-p^{-4}-p^{-5}-p^{-6}+p^{-7}+p^{-8}
=\left( 1-p^{-2}\right) \left(1-p^{-3} \right) \left(1+p^{-2}+ p^{-3} \right).
\end{equation*}
Taking the product over all primes $p$ and applying the Euler product formula \eqref{eq:Euler}, we obtain
\begin{equation*}
\prod_p\mu_{p}(\mathcal{S})
=\frac{1}{\zeta(2)\zeta(3)} \prod_p\left(1+p^{-2}+ p^{-3} \right) .
\end{equation*}
Plugging into \eqref{eq:mu(2,6)}, we see that to obtain the density $\mu(\mathcal{S})$, it reduces to computing $\prod_p (1+p^{-2}+p^{-3})$, or to understanding the Euler product   $\prod_p(1+p^{-2\boldsymbol{z}}+p^{-3\boldsymbol{z}})$. 

It would be interesting to study whether such an Euler product for some generalized zeta function 
(1) has any interesting properties relevant to SNF; 
(2) extends to a meromorphic function on all of $\mathbb{C}$\,;
(3) satisfies a functional equation.
\end{remark}

\subsection{The Determinant}\label{sec:det}
$ $

The determinant of an $m\times m$ matrix can be regarded as a polynomial $G$ of its $m^2$ entries. 
Note that $G$ is not a constant since it takes  values $1$ and $0$ for the identity matrix and the zero matrix, respectively. Thus we can 
apply Theorem \ref{thm:G=0} to $G$ and obtain the following.

\begin{theorem}\label{thm:det}
Let $c$ be an integer. The probability that the determinant  equals $c$ for an $m\times m$ matrix with entries from $\mathbb{Z}_{(k)}$ goes to $0$ as $k\to \infty$\,; in other words, the density of the determinant of a random $m\times m$ integer matrix is always $0$. 
\end{theorem}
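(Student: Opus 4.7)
The plan is to realize the determinant as a multivariate polynomial in the $m^2$ matrix entries and then invoke Theorem~\ref{thm:G=0}\,(ii) directly. Let $G(x_1,\dots,x_{m^2}) \in \mathbb{Z}[x_1,\dots,x_{m^2}]$ denote the determinant polynomial in the entries of an $m \times m$ matrix. The event ``$\det M = c$'' for a random integer matrix $M$ is precisely the event that the polynomial $G - c$ vanishes at a random vector $x \in \mathbb{Z}_{(k)}^{m^2}$.

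The key observation is that $G - c$ is a \emph{nonzero} polynomial in $\mathbb{Z}[x_1,\dots,x_{m^2}]$ for every integer $c$. Indeed, $G$ itself is nonconstant since it takes the value $1$ on the identity matrix and the value $0$ on the zero matrix, so $G$ is not the constant polynomial $c$, and hence $G - c \not\equiv 0$. (Alternatively, Lemma~\ref{lemma:irred} already guarantees that $G$ is irreducible of positive degree in $\mathbb{Q}[x_1,\dots,x_{m^2}]$, which also certifies that $G$ is nonconstant.)

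Now apply Theorem~\ref{thm:G=0}\,(ii) to the nonzero polynomial $G - c$: the probability that $(G - c)(x) = 0$ with $x$ uniformly distributed over $\mathbb{Z}_{(k)}^{m^2}$ tends to $0$ as $k \to \infty$. Since this probability is exactly $\mu^{(k)}$ of the event $\det M = c$, the density of this event is $0$, which is the desired conclusion.

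There is essentially no obstacle here: the theorem is a one-line corollary of Theorem~\ref{thm:G=0}, and the only verification needed is that the determinant polynomial is nonconstant, which is immediate from its values on two specific matrices. The real work was already done in Section~\ref{sec:g_i=0}.
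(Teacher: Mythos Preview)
Your proof is correct and essentially identical to the paper's: the paper also notes that the determinant polynomial $G$ is nonconstant (using the same two values, $1$ on the identity and $0$ on the zero matrix) and then applies Theorem~\ref{thm:G=0} to conclude.
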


This result plays an important role in the next two theorems. The first of them shows that
the probability that a random $n\times m$ integer matrix is full rank is $1$.

\begin{theorem}\label{thm:D_{m,m}=0}
If \,$\mathcal{S}\subseteq \mathbb{S}$ satisfies  $D_{m,m}=0$ for all $D=(D_{i,j})_{n\times m}\in \mathcal{S}$, then we have
$\mu(\mathcal{S}) =0$\,; 
in other words, the probability that an  $n\times m$ matrix with entries from $\mathbb{Z}_{(k)}$ is  full rank goes to $1$ as $k\to \infty$\,.
\end{theorem}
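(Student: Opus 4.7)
The plan is to reduce the statement directly to Theorem \ref{thm:det}. First, I would observe that for any matrix $M$ with $\mathrm{SNF}(M)\in\mathcal{S}$, the condition $D_{m,m}=0$ on its SNF is equivalent to rank deficiency: by the definition in the introduction, the SNF has the form $\mathrm{diag}(d_1,\ldots,d_r,0,\ldots,0)$ with $d_1,\ldots,d_r$ nonzero and $r=\operatorname{rank}(M)$, so $D_{m,m}=0$ iff $r<m$. Since $m\le n$, this forces every $m\times m$ minor of $M$ to vanish; in particular, the determinant of the top-left $m\times m$ submatrix $M'$ of $M$ vanishes.

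Next I would turn this containment of events into the inequality
\[
\mu^{(k)}(\mathcal{S}) \;\le\; \Pr\bigl[\det M' = 0\bigr].
\]
The $m^2$ entries of $M'$ are a subcollection of the independent uniform-on-$\mathbb{Z}_{(k)}$ entries of $M$, so the right-hand side is exactly the probability addressed by Theorem \ref{thm:det} with $c=0$ and matrix size $m\times m$. That theorem gives $\Pr[\det M'=0]\to 0$ as $k\to\infty$, whence $\mu^{(k)}(\mathcal{S})\to 0$ and therefore $\mu(\mathcal{S})=0$.

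For the equivalent reformulation, a matrix $M$ is full rank precisely when $D_{m,m}\ne 0$ in its SNF. Applying the preceding argument to the maximal such $\mathcal{S}$, namely $\mathcal{S}_0:=\{D\in\mathbb{S}:D_{m,m}=0\}$, and using $\mu^{(k)}(\mathbb{S})=1$ to take complements then yields that the probability of full rank tends to $1$.

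There is no substantive obstacle in the argument. All of the analytic content is packaged in Theorem \ref{thm:det} (equivalently Theorem \ref{thm:G=0}(ii) applied to the determinant polynomial), and the only point needing a line of justification is the elementary equivalence between $D_{m,m}=0$ and rank deficiency, which is immediate from the definition of SNF via invariant factors recalled in the introduction.
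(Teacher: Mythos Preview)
Your proposal is correct and follows essentially the same approach as the paper: both reduce to Theorem~\ref{thm:det} with $c=0$ by observing that $\mathrm{SNF}(M)_{m,m}=0$ forces the $m\times m$ minors of $M$ to vanish. Your version is simply more explicit, singling out one particular $m\times m$ submatrix and spelling out the inequality $\mu^{(k)}(\mathcal{S})\le\Pr[\det M'=0]$.
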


\begin{proof}
If $\mathrm{SNF}(M)_{m,m}=0$\,, then all $m\times m$ minors of $M$ are zero.  
Therefore the result  follows from Theorem \ref{thm:det} with $c=0$\,.
\end{proof}

When $m=n$, we can generalize Theorem \ref{thm:D_{m,m}=0} to $\mathcal{S}$ with  finitely many values of $D_{m,m}$'s.

\begin{theorem}\label{thm:mu(D)}
Suppose that $m=n$ and \,$\mathcal{S}\subset \mathbb{S}$\,, then we have 
$\mu(\mathcal{S}) =0$ if the set $\{D_{n,n}:D=(D_{i,j})_{n\times n}\in \mathcal{S}\}$ is finite; 
in particular, this holds for any finite subset $\mathcal{S}\subset \mathbb{S}$\,.
\end{theorem}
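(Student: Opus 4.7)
The plan is to reduce this to Theorem \ref{thm:det} via the identity $|\det M| = \prod_{i=1}^n D_{i,i}$ where $D=\mathrm{SNF}(M)$. The key point is that the divisibility chain $D_{1,1}\mid D_{2,2}\mid\cdots\mid D_{n,n}$ together with finiteness of the set $V:=\{D_{n,n}:D\in\mathcal{S}\}$ forces $\det M$ to lie in a finite set of integers, after which one finite-union argument finishes the proof.

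More precisely, I would argue as follows. Split $V=V_0\sqcup V_*$, where $V_0=V\cap\{0\}$ and $V_*=V\setminus\{0\}$. For any $M$ with $\mathrm{SNF}(M)=D\in\mathcal{S}$, if $D_{n,n}=0$ then $\det M=0$, while if $D_{n,n}\in V_*$ then each $D_{i,i}$ is a nonzero divisor of $D_{n,n}$, so
\begin{equation*}
|\det M|=\prod_{i=1}^{n}|D_{i,i}|\;\le\;|D_{n,n}|^{n}\;\le\;\bigl(\max_{v\in V_*}|v|\bigr)^{n}.
\end{equation*}
Hence the set $C:=\{\det M:\mathrm{SNF}(M)\in\mathcal{S}\}$ is contained in a finite set of integers (the bounded interval of integers together with $0$).

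Consequently
\begin{equation*}
\mu^{(k)}(\mathcal{S})\;\le\;\sum_{c\in C}\Pr\bigl(\det M=c\bigr),
\end{equation*}
where $M$ has entries i.i.d.\ uniform on $\mathbb{Z}_{(k)}$. By Theorem \ref{thm:det} applied to the polynomial $G-c$ in the $n^{2}$ matrix entries (with $G$ the determinant polynomial, which is nonconstant since $n\ge 1$ and takes the values $0$ and $1$), every summand on the right tends to $0$ as $k\to\infty$. Because $C$ is finite, the sum itself tends to $0$, so $\limsup_{k\to\infty}\mu^{(k)}(\mathcal{S})=0$; combined with $\mu^{(k)}(\mathcal{S})\ge 0$, this yields existence of the limit and $\mu(\mathcal{S})=0$.

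I do not anticipate a real obstacle here beyond a slightly delicate bookkeeping point: when $D_{n,n}=0$ the individual $D_{i,i}$'s for $i<n$ are not bounded by the divisibility condition (every integer divides $0$), so one cannot argue directly that $\prod D_{i,i}$ is bounded. This is handled cleanly by separating off the case $D_{n,n}=0$ and noting that the determinant collapses to $0$ automatically in that case, so the finite-set-of-determinant-values conclusion still holds. The final sentence of the statement, that any finite $\mathcal{S}\subset\mathbb{S}$ has $\mu(\mathcal{S})=0$, is then an immediate special case since finiteness of $\mathcal{S}$ trivially forces finiteness of $\{D_{n,n}:D\in\mathcal{S}\}$.
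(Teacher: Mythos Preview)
Your proposal is correct and follows essentially the same route as the paper: use $|\det M|=\prod_i D_{i,i}$, split off the case $D_{n,n}=0$ (where $\det M=0$) from the case $D_{n,n}\neq 0$ (where divisibility bounds the product), conclude that $\det M$ ranges over a finite set, and finish with Theorem~\ref{thm:det}. Your write-up is somewhat more explicit about the finite-union estimate on $\mu^{(k)}$, but the argument is the same.
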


\begin{proof} 
For any $M$ such that $\mathrm{SNF}(M)=D\in \mathcal{S}$, we have 
$|M|=\pm D_{1,1}D_{2,2}\cdots D_{n,n}$\,.
As a consequence, if $D_{n,n}=0$\,, then $|M|=0$\,; 
if $D_{n,n}\neq 0$\,, then the $D_{i,i}$'s are divisors of $D_{n,n}$ and therefore $|M|$ has finitely many choices. 
The result then follows from Theorem \ref{thm:det}.
\end{proof}

If $D_{n,n} \neq 0$ for all $D\in \mathcal{S}$, then we have another proof of Theorem \ref{thm:mu(D)} without invoking Theorem \ref{thm:det}.
We cannot take advantage of \eqref{eq:mu_{P_P}} from Theorem \ref{thm:mu(S)} since $r=m=n>m\wedge(n-1)$ in this case. Instead, we will start from the observation that $\mu^{(k)}(\mathcal{S}) \le \mu^{(k)}_{P(\ell)}(\{I\})$ with $P(\ell)$ a product of primes and $I$ the identity matrix, then bound $\mu^{(k)}_{P(\ell)}(\{I\})$ from above by  $2^{n^2} \mu_{P(\ell)}(\{I\})$ through a similar idea as in the proof of \eqref{eq:lambda_{P_P}(Z)} (approximating  $2k+1$ by a multiple of $P(\ell)$),  and finally show that $\mu_{P(\ell)}(\{I\})\to 0$ as $\ell\to \infty$.

\begin{proof}[Another Proof of Theorem \ref{thm:mu(D)} with $D_{n,n} \neq 0$  for all $D\in \mathcal{S}$\,]

Let $I$ be the $n\times n$ identity matrix and $j^*\in \mathbb{Z}_+$ such that $p_{j}> c$ for all $j\ge j^*$. Then for any $j>j^*$, SNF$(M)\in \mathcal{S}$ (mod $p_j$) only if SNF$(M)=I$ (mod $p_j$).  

Applying \eqref{eq:N} with $s=1$ and $a_1=n$ (or \cite[Exercise 1.192(b)]{S}), we get the number of  $n\times n$ non-singular matrices over $\mathbb{Z}/p_j\mathbb{Z}$\,:
\begin{equation*}
p_j^{n^2}[p_j,n]:=\beta_j\,.
\end{equation*}  

Set 
\begin{equation*}
P(\ell) := p_{j^*}\, p_{j^*+1} \cdots p_{\ell}\,,\ \ell\ge j^*.
\end{equation*} 
Then SNF$(M)\in \mathcal{S}$ (mod $P(\ell)$) only if SNF$(M)=I$ (mod $P(\ell)$).  Hence $\mu_{P(\ell)}^{(k)}(\mathcal{S})\le \mu_{P(\ell)}^{(k)}(\{I\})$.

By the Chinese remainder theorem, the number of $n\times n$ matrices over $\mathbb{Z}/P(\ell)\mathbb{Z}$ whose SNF  equals $I$ mod $P(\ell)$ is  \begin{equation}\label{eq:beta(l)}
\beta_{j^*} \beta_{j^*+1}\cdots \beta_{\ell}=\prod_{j=j^*}^{ \ell}\, p_j^{n^2}[p_j,n]=P(\ell)^{n^2}\prod_{j=j^*}^{ \ell}[p_j,n]:=\beta(\ell)\,.
\end{equation}  

For any integer $k$ with $K:=2k+1 >P(\ell)$,  there exists $q\in \mathbb{Z}_+$ such that
\begin{equation}\label{eq:q''}
q\cdot P(\ell) \le  K < (q+1)\cdot P(\ell)\,. 
\end{equation}   
Then for any integer $z$\,, there are at most $q+1$ numbers among $\mathbb{Z}_{(k)}$ that equal  $z$ mod $P(\ell)$. 
Therefore the number of $n\times n$ matrices with entries from $\mathbb{Z}_{(k)}$ whose SNF  is equal to $I$ mod $P(\ell)$   is at most $(q+1)^{n^2} \beta(\ell)$. Hence
\begin{equation}\label{eq:mu_k(I)}
\mu_{P(\ell)}^{(k)}(\{I\})\le \frac{(q+1)^{n^2} \beta(\ell)}{ K^{n^2} }\le \frac{(q+1)^{n^2} \beta(\ell)}{ \left[ qP(\ell)\right] ^{n^2} }
=\left(\frac{q+1}{q}\right) ^{n^2} \frac{\beta(\ell)}{P(\ell)^{n^2} }
\le 2 ^{n^2}\prod_{j=j^*}^{ \ell} [p_j,n]\,,
\end{equation}
on the strength of \eqref{eq:q''} and \eqref{eq:beta(l)} (note that $P(\ell)^{-n^2}\beta(\ell)=\mu_{P(\ell)}(\{I\})$ by \eqref{eq:beta(l)} and \eqref{eq:mu_{P_P}}).

Notice that 
\begin{equation}\label{eq:1-x}
1- x\le \exp(-x)\,,\quad \forall\ x\in[0,1]\,.
\end{equation}
(To see this, let $W(x):=1- x-\exp(-x)$, $x\in[0,1]$, then $W'(x)=-1+\exp(-x)\le 0$\,. Hence $W(x)\le W(0)=0$\,.)

Applying \eqref{eq:1-x} with $x=p_j^{-1}$ $(j^*\le  j\le \ell)$, we obtain  
$$[p_j,n]\le 1- p_j^{-1}\le  \exp\left( -p_j^{-1}\right).$$
Plugging into \eqref{eq:mu_k(I)} yields
\begin{equation*}
\mu_{P(\ell)}^{(k)}(\{I\})
\le 2^{n^2} \prod_{1\le  j\le \ell}\ \exp\left( -p_j^{-1}\right) 
=2^{n^2} \exp\left( -\sum_{j^*\le  j\le \ell} p_j^{-1}\right)
\to 0\ \ \  \text{as} \ \ \ell\to \infty  
\end{equation*}
with $2k+1\,(=K)>P(\ell)$, 
by the well-known result that 
\begin{equation*}
\sum_{1\le  j\le \ell} p_j^{-1}\to\infty \ \ \ \text{as} \ \ \ell\to \infty\,.
\end{equation*}
Since $\mu^{(k)}(\mathcal{S})\le \mu_{P(\ell)}^{(k)}(\mathcal{S})\le \mu_{P(\ell)}^{(k)}(\{I\}) $, we deduce that 
$\mu^{(k)}(\mathcal{S})\to 0$ as $k \to \infty$\,, as desired.
\end{proof}

\subsection{\texorpdfstring{Probability that All Diagonal Entries of the SNF Are $1$}{Probability that All Diagonal Entries of the SNF Are 1}} \label{sec:1}
$ $

Theorem \ref{thm:mu(D)} (along with Theorem \ref{thm:subset}) implies that the probability that all diagonal entries of an SNF are $1$ is $0$ if $m=n$\,; 
however, as we will see soon,  this probability is positive if $m<n$\,. We will need Theorems   \ref{thm:p^s} and \ref{thm:mu(S)} to determine its value.

\begin{theorem}\label{thm:1}
Let $E$ be the $n\times m$ diagonal matrix whose diagonal entries are all $1$. 
If $m<n$\,, then we have
\begin{equation*}
\mu(\{E\}) = \frac{1}{\prod_{i=n-m+1}^n \zeta(i)} \to
\begin{cases} 
1\,, & {\mathrm{if}}\ m\  {\mathrm{is\ fixed}}\ \\ 
\frac{1}{\prod_{i=n-m+1}^{\infty} \zeta(i)}\,, & {\mathrm{if}}\ n-m\  {\mathrm{is\ fixed}}\ 
\end{cases}\,,\ \ {\mathrm{as}}\ \ n\to \infty\,.
\end{equation*} 
\end{theorem}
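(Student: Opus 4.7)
The plan is to identify $\{E\}$ as a special case of the set $\mathcal{S}$ in \eqref{eq:S}, apply Theorem~\ref{thm:mu(S)}, and then compute each local factor $\mu_p(\{E\})$ using the formula \eqref{eq:mu_{p^s}(D)}. The hypothesis $m<n$ is essential because it guarantees $r=m\le m\wedge(n-1)$, which is exactly what Theorem~\ref{thm:mu(S)} requires; for $m=n$, the singleton $\{E\}$ has density $0$ by Theorem~\ref{thm:mu(D)}.

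First I would set $r=m$ and $d_1=\cdots=d_m=1$, so that $\mathcal{S}=\{E\}$. Taking $z=d_r=1$, all the exponents $s_j$ in Theorem~\ref{thm:mu(S)} vanish, and \eqref{eq:mu(S)} specializes to
\begin{equation*}
\mu(\{E\})=\prod_p \mu_p(\{E\})\,.
\end{equation*}
Next I would compute $\mu_p(\{E\})$. Modulo $p$, the matrix $E$ coincides (up to multiplication by units) with $D_{\bm a}$ for $\bm a=(a_1)=(m)$, since all $m$ diagonal entries are non-$p$-multiples. Applying \eqref{eq:mu_{p^s}(D)} with $s=1$, $a_0=0$, $a_1=m$ gives
\begin{equation*}
\mu_p(\{E\})=p^{-(n-m)(m-m)}\cdot\frac{[p,n][p,m]}{[p,n-m][p,0][p,m]}=\frac{[p,n]}{[p,n-m]}=\prod_{j=n-m+1}^{n}\bigl(1-p^{-j}\bigr).
\end{equation*}

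Then I would take the product over all primes, interchanging the two products and applying the Euler product formula \eqref{eq:Euler}. This is valid because $m<n$ forces $n-m+1\ge 2$, so every exponent $j$ appearing in the product satisfies $j\ge 2$. This yields
\begin{equation*}
\mu(\{E\})=\prod_p\prod_{j=n-m+1}^{n}\bigl(1-p^{-j}\bigr)=\prod_{j=n-m+1}^{n}\frac{1}{\zeta(j)}=\frac{1}{\prod_{i=n-m+1}^{n}\zeta(i)},
\end{equation*}
which is the main equality.

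Finally, for the asymptotics: if $m$ is fixed and $n\to\infty$, the product has only $m$ factors, each of the form $\zeta(i)$ with $i\to\infty$, and $\zeta(i)\to 1$, so the reciprocal product tends to $1$. If $n-m$ is fixed, the lower index $n-m+1$ stays fixed while the upper index grows, so the product tends to the convergent infinite product $\prod_{i=n-m+1}^{\infty}\zeta(i)$ (convergence follows from $\zeta(i)-1=O(2^{-i})$). The only real subtlety is the index bookkeeping in step~3 and checking that $j\ge 2$ throughout to legitimize the Euler product; everything else is a direct application of the machinery already set up.
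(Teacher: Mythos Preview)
Your proof is correct and follows essentially the same approach as the paper: apply Theorem~\ref{thm:mu(S)} with $r=m$, $d_i=z=1$, $s_j=0$, then use \eqref{eq:mu_{p^s}(D)} with $s=1$, $a_1=m$ to get $\mu_p(\{E\})=[p,n]/[p,n-m]$, interchange the products and invoke the Euler product formula, and finally use $\zeta(i)\to 1$ for the limits. Your write-up is in fact slightly more explicit than the paper's in justifying why $m<n$ is needed both for the applicability of Theorem~\ref{thm:mu(S)} and for the Euler product step.
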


\begin{proof}
Apply Theorem \ref{thm:mu(S)} with $\mathcal{S}=\{E\}$, $r=m$\,, $d_i=z=1$\,, $s_j=0$ for all $i,j$, and then Theorem \ref{thm:p^s} with $s=1$\,, $a_1=m$\,:
\begin{align*}
\mu(\{E\}) =&\ \prod_{p} \mu_{p}(\{E\})
=\prod_{p} \frac{[p,n]}{[p,n-m]}
= \prod_{p}\prod_{i=n-m+1}^n \left( 1-p^{-i}\right) =\prod_{i=n-m+1}^n  \prod_{p} \left( 1-p^{-i}\right)\\
=&\ \frac{1}{\prod_{i=n-m+1}^n \zeta(i)}\,,
\end{align*}
on the strength of $n-m+1\ge 2$ and the Euler product formula \eqref{eq:Euler}. 

Finally, thanks to the fact that $\zeta(i)\downarrow 1$ as $i\to \infty$\,, we obtain the limits of $\mu(\{E\})$ as desired.
\end{proof}

\subsection{\texorpdfstring{Probability that At Most $\ell$ Diagonal Entries of the SNF Are Not $1$}{Probability that At Most l Diagonal Entries of the SNF Are Not 1}}\label{sec:l}
$ $

In this section, we assume that $m=n$.  We provide a formula for the
probability that an SNF has at most $\ell$ diagonal entries not equal
to $1$ and a formula for the limit of this probability as $n\to
\infty$\,.  In particular, when $\ell=1$, this limit is the reciprocal
of a product of values of the Riemann zeta function at positive
integers and equals $0.846936$.  For bigger $\ell$, we prove that this
limit converges to $1$ as $\ell\to \infty$ and find its asymptotics
(see \eqref{eq:asymZ(l)}).

\subsubsection{\texorpdfstring{Cyclic SNFs ($\ell=1$)}{Cyclic SNFs (l=1)}}\label{sec:cyclic}
$ $

We shall say that an SNF is {\it cyclic} if it has at most one
diagonal entry not equal to $1$, i.e., if the corresponding cokernel
is cyclic.  Denote the set of $n\times n$ cyclic SNFs by
$\mathcal{T}_n$\,.  We will compute the probability $\mu
(\mathcal{T}_n)$ of having a cyclic SNF, and show that this
probability strictly decreases to $0.846936\cdots$ as $n\to \infty$\,.
As mentioned above, this result was first obtained by Ekedahl
\cite[Section 3]{E}. Later Nguyen and Shparlinski \cite[(1.2)]{NS}
showed that if take a subgroup of $\mathbb{Z}^n$ 
uniformly among all
subgroups of index at most $V$ and let $V\to\infty$, then the
probability that the quotient group is cyclic is also
$\mu(\mathcal{T}_n)$.  This result is equivalent to computing the
probability that an $n\times n$ integer matrix has a cyclic cokernel
using a certain probability distribution different from $\mu$.  We do
not know a simple reason why these two probability distributions yield
the same probability of a cyclic cokernel. Perhaps there is a
universality result which gives the same conclusion for a wide class
of probability distributions.

\begin{theorem}\label{thm:cyclic SNF}
We have

\noindent
\rm{(i)}
\begin{equation}\label{eq:Z_n}
\mu (\mathcal{T}_n) = \frac{1}{\prod_{i=2}^{n}\zeta(i)} \cdot \prod_{p} \left(1+\frac{1}{p^2} + \frac{1}{p^3} + \cdots + \frac{1}{p^n}\right)=:Z_n\,;
\end{equation} 

\noindent
\rm{(ii)} $Z_n$ is strictly decreasing in $n$\,;

\noindent
\rm{(iii)} $$Z_2=\frac{1}{\zeta(4)}=\frac{90}{\pi^4}\approx0.923938\,;$$ 

\noindent
\rm{(iv)}
\begin{equation*}
\lim_{n\to \infty}Z_n=\frac{1}{\zeta(6)\prod_{i=4}^{\infty}\zeta(i)}\approx0.846936\,.
\end{equation*}
\end{theorem}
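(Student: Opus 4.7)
The plan is to derive each part from Theorems \ref{thm:p^s} and \ref{thm:mu(S)}, with the computation in (i) being the central calculation. For (i), I observe that $\mathcal{T}_n=\{D\in \mathbb{S}: D_{i,i}=1\ \forall\ 1\le i\le n-1\}$ fits the form \eqref{eq:S} with $r=n-1\le m\wedge(n-1)=n-1$ (recall $m=n$), $d_1=\cdots=d_{n-1}=1$ and $z=1$ (so every $s_j=0$). Theorem \ref{thm:mu(S)} then gives $\mu(\mathcal{T}_n)=\prod_p \mu_p(\mathcal{T}_n)$, so it remains to compute the local factors. Modulo $p$ (and up to units), the set $\mathcal{T}_n$ contains exactly two elements in the notation of Theorem \ref{thm:p^s}, namely $D_{(n-1)}$ (when $p\mid D_{n,n}$) and $D_{(n)}$ (when $p\nmid D_{n,n}$). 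Applying \eqref{eq:mu_{p^s}(D)} with $s=1$ and simplifying via $[p,n]=(1-p^{-n})[p,n-1]$ gives $\mu_p(\{D_{(n)}\})=[p,n]$ and $\mu_p(\{D_{(n-1)}\})=p^{-1}(1-p^{-n})[p,n]/(1-p^{-1})^2$. The identity
\begin{equation*}
(1-p^{-1})^2+p^{-1}(1-p^{-n})=(1-p^{-1})\bigl(1+p^{-2}+p^{-3}+\cdots+p^{-n}\bigr)
\end{equation*}
then collapses their sum to $\prod_{i=2}^n(1-p^{-i})\cdot(1+p^{-2}+\cdots+p^{-n})$; taking the product over primes and invoking the Euler formula \eqref{eq:Euler} yields \eqref{eq:Z_n}.

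For (ii), I would form the ratio $\mu_p(\mathcal{T}_{n+1})/\mu_p(\mathcal{T}_n)=(1-p^{-n-1})\cdot(1+p^{-2}+\cdots+p^{-n-1})/(1+p^{-2}+\cdots+p^{-n})$, expand the numerator as $(1+p^{-2}+\cdots+p^{-n})-\sum_{i=n+3}^{2n+2}p^{-i}$, and conclude that each local ratio is strictly less than $1$; already the factor at $p=2$ forces $Z_{n+1}/Z_n<1$. For (iii), substituting $n=2$ into \eqref{eq:Z_n} and using $1+p^{-2}=(1-p^{-4})/(1-p^{-2})$ telescopes the Euler product to $\zeta(2)/\zeta(4)$, giving $Z_2=1/\zeta(4)=90/\pi^4$.

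For (iv), I would pass $n\to\infty$ in \eqref{eq:Z_n}. The partial sums $1+p^{-2}+\cdots+p^{-n}$ increase monotonically to $(1-p^{-1}+p^{-2})/(1-p^{-1})$, and the factorization $1-x+x^2=(1+x^3)/(1+x)$ together with $1+x^k=(1-x^{2k})/(1-x^k)$ yields
\begin{equation*}
\frac{1-p^{-1}+p^{-2}}{1-p^{-1}}=\frac{1-p^{-6}}{(1-p^{-2})(1-p^{-3})}.
\end{equation*}
Taking the product over all primes and applying \eqref{eq:Euler} three times gives $\prod_p(1+p^{-2}+p^{-3}+\cdots)=\zeta(2)\zeta(3)/\zeta(6)$. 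Combined with the prefactor $1/\prod_{i=2}^{\infty}\zeta(i)$ (which converges absolutely since $\zeta(i)-1=O(2^{-i})$), this yields $\lim_n Z_n=1/(\zeta(6)\prod_{i=4}^{\infty}\zeta(i))\approx 0.846936$. The interchange of limit and infinite product is justified by (ii): each factor $\mu_p(\mathcal{T}_n)$ is monotone in $n$, bounded below by its $n=\infty$ value. The main obstacles are recognizing the algebraic collapses in (i) and (iv); once those are in hand, the rest is straightforward bookkeeping with the Euler product formula.
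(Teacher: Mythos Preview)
Your proposal is correct and follows essentially the same route as the paper: the same application of Theorems \ref{thm:mu(S)} and \ref{thm:p^s} in (i), the same local-ratio argument in (ii), the same telescoping in (iii), and the same factorization $(1-t^6)/[(1-t^2)(1-t^3)]$ in (iv). The only cosmetic differences are that the paper bounds the local ratio in (ii) via $(1-t^{n+1})(1+t^{n+1})<1$ rather than by your direct expansion, and justifies the interchange in (iv) by applying monotone convergence to the \emph{increasing} factors $1+p^{-2}+\cdots+p^{-n}$ rather than to the decreasing $\mu_p(\mathcal{T}_n)$; both are equivalent.
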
 

\begin{proof} (i)
Apply Theorem \ref{thm:mu(S)} with $\mathcal{S}=\mathcal{T}_n$\,, $r=n-1$\,, $d_i=z=1$\,, $s_j=0$ for all $i,j$, and then Theorem \ref{thm:p^s} with  $s=1$\,,  $a_1=n$\,, $n-1$\,, respectively:
\begin{eqnarray}\label{eq:Z_nPf}
\ \ && \mu(\mathcal{T}_n) \,=\, \prod_{p} \mu_{p}(\mathcal{T}_n)
\,=\,\prod_{p}\left([p,n]+ \frac{p^{-1}[p,n]^2}{[p,1]^2[p,n-1]}\right)
=\,\prod_{p}\frac{[p,n]}{[p,1]} \left([p,1]+ \frac{p^{-1}[p,n]}{[p,1][p,n-1]}\right)\\ \nonumber
\ \ &=\!\!& \frac{1}{\prod_{i=2}^{n}\zeta(i)}  \prod_{p} \left(1-p^{-1} + \frac{p^{-1}(1-p^{-n})}{1-p^{-1}}\right)
=\, \frac{1}{\prod_{i=2}^{n}\zeta(i)} \prod_{p}\left(1+\frac{1}{p^2} + \frac{1}{p^3} + \cdots + \frac{1}{p^n}\right)=\,Z_n\,.
\end{eqnarray}
Here in the fourth equality we used the fact that
\begin{equation}\label{eq:zeta}
\prod_{p}\frac{[p,n]}{[p,1]} 
= \prod_{p}\prod_{i=2}^n \left( 1-p^{-i}\right) 
= \prod_{i=2}^n\prod_{p} \left( 1-p^{-i}\right) =\frac{1}{\prod_{i=2}^n \zeta(i)}\,,
\end{equation}
by virtue of the Euler product formula  \eqref{eq:Euler}.

\smallskip
(ii) 
We consider the ratio: 
\begin{equation*}
\frac{Z_{n+1}}{Z_n}
=\prod_{p} \left( 1-p^{-(n+1)}\right) \cdot \frac{1+p^{-2}+p^{-3}+\cdots+p^{-(n+1)}}{1+p^{-2}+p^{-3}+\cdots+p^{-n}}\,,
\end{equation*}
thus it suffices to show 
\begin{equation}\label{eq:decrease}
\left( 1-p^{-(n+1)}\right) \cdot \frac{1+p^{-2}+p^{-3}+\cdots+p^{-(n+1)}}{1+p^{-2}+p^{-3}+\cdots+p^{-n}}<1
\end{equation}
for all $p$. 
For ease of notation, we denote $p^{-1}$ by $t$ throughout this paper, then 
\begin{equation*}
{\rm{LHS\ of}}\ \eqref{eq:decrease} 
=\left( 1-t^{n+1}\right) \cdot
\left( 1+ \frac{t^{n+1}}{1+t^{2}+t^{3}+\cdots+t^{n}}\right) 
< \left(1-t^{n+1}\right)\left(1+t^{n+1}\right)
=1-t^{2(n+1)}<1\,.
\end{equation*}

\smallskip
(iii) When $n=2$, it follows from definition \eqref{eq:Z_n} that
$$Z_2
=\prod_{p}\left( 1-p^{-2}\right) \left( 1+p^{-2}\right) 
=\prod_{p}\left( 1-p^{-4}\right) 
=\frac{1}{\zeta(4)}\,.$$

\smallskip
(iv) 
Now assume that $n\ge 3$.
According to the definition \eqref{eq:Z_n} of $Z_n$\,, it suffices to prove that
\begin{equation*}
\lim_{n\to \infty}\prod_{p} \left(1+\frac{1}{p^2} + \frac{1}{p^3} + \cdots + \frac{1}{p^n}\right)
=\frac{\zeta(2)\zeta(3)}{\zeta(6)}\,.
\end{equation*}
In fact, we will show that
\begin{equation}\label{eq:limZ}
\frac{\zeta(2)\zeta(3)}{\zeta(6)}
=\prod_{p} \left(1+\frac{1}{p^2} + \frac{1}{p^3} + \cdots \right)
=\lim_{n\to \infty}\prod_{p} \left(1+\frac{1}{p^2} + \frac{1}{p^3} + \cdots + \frac{1}{p^n}\right).
\end{equation}

We adopt the notation $t:=p^{-1}$. For the left equality of \eqref{eq:limZ}, we observe that
\begin{equation}\label{eq:Y(p,1)}
1+t^2 + t^3 + \cdots
=1+\frac{t^{2}}{1-t}
=\frac{1-t+t^{2}}{1-t}
=\frac{1+t^{3}}{(1+t)(1-t)}
=\frac{1-t^{6}}{\left( 1-t^{2}\right) \left(1-t^{3} \right)}\,.
\end{equation}
Taking the product of this equation over all  reciprocals $t$ of primes and applying
the Euler product formula \eqref{eq:Euler} yields the desire equality.

\smallskip
For the right equality of \eqref{eq:limZ},
since
\begin{equation*}
0<
1-\frac{1+t^{2}+t^{3}+\cdots+t^{n}}{1+t^{2}+t^{3}+\cdots}
=\frac{t^{n+1}+t^{n+2}+\cdots}{1+t^{2}+t^{3}+\cdots}
<\frac{t^{n+1}+t^{n+2}+\cdots}{t^{2}+t^{3}+\cdots}
= t^{n-1},
\end{equation*}
combining with \eqref{eq:Euler}, we obtain
\begin{equation*}
1>\prod_{t} \frac{1+t^{2}+t^{3}+\cdots+t^{n}}{1+t^{2}+t^{3}+\cdots}
>\prod_{t}\left( 1- t^{n-1}\right)
=\frac{1}{\zeta(n-1)}
\to 
1,\ \ \mathrm{as}\ n\to \infty
\end{equation*}
and complete the proof, where $\prod_t$ represents a product over all reciprocals $t$ of primes.

\smallskip
One can also show the right equality of \eqref{eq:limZ} using  the fact that
\begin{equation}\label{eq:increasing}
1<
1+p^{-2}+p^{-3}+\cdots+p^{-n}\, \uparrow\, 
1+p^{-2}+p^{-3}+\cdots,\ \ \mathrm{as}\ n\to \infty
\end{equation}
and the following version of monotone convergence theorem (which will also be very useful  later in proving Theorem \ref{thm:l}\,(iii)).
\begin{theorem}\label{thm:MCT}
If real numbers $x_{i,j}$ $(i,j=1,2,\dots)$  satisfy $1\le x_{i,j}\uparrow x_i$ as $j\to \infty$ for all $i$, then we have 
\begin{equation}\label{eq:MCT}
\lim_{j\to \infty}\prod_{i=1}^{\infty}x_{i,j}
=\prod_{i=1}^{\infty}x_{i}\,.
\end{equation}
Here we allow the products and the limit to be infinity.
\end{theorem}
\begin{proof}
Applying the monotone convergence theorem to $\log x_{i,j}\,(\ge 0)$ gives
$$\lim_{j\to \infty}\sum_{i=1}^{\infty}\log x_{i,j}
=\sum_{i=1}^{\infty}\log x_{i}\,.$$
Thus
$$\lim_{j\to \infty}\log \prod_{i=1}^{\infty}x_{i,j}
=\log \prod_{i=1}^{\infty}x_{i}\,,$$
and \eqref{eq:MCT} follows.
\end{proof}

Thanks to \eqref{eq:increasing}, we can apply Theorem \ref{thm:MCT} with $x_{i,j}=1+p_i^{-2}+p_i^{-3}+\cdots+p_i^{-j}$ and $x_i=1+p_i^{-2}+p_i^{-3}+\cdots$, and arrive at the desire equality.
\end{proof}

\begin{remark} 
(1) The proof of Theorem \ref{thm:cyclic SNF}\,(iv) is reminiscent of (though not directly related to) \cite[Exercise 1.186\,(c)]{S}.

(2) Theorem \ref{thm:cyclic SNF}\,(i), (iv) and the numerical value of (iii) are obtained in \cite[Section 3]{E} via a slightly different approach. 
We have provided a complete and more detailed  proof.
\end{remark}

\subsubsection{\texorpdfstring{More Generators (General $\ell$)}{More Generators (General l)}}\label{sec:l>1}
$ $

Now we consider the SNFs with at most $\ell\,(\le n)$ diagonal entries not equal to $1$, i.e., whose corresponding cokernel has at most $\ell$ generators. 
Denote the set of such $n\times n$ SNFs by $\mathcal{T}_n(\ell)$. 
In particular, when $\ell=n$, we have $\mu(\mathcal{T}_n(n))=1$.
The above discussion on cyclic SNFs is for the case $\ell=1$.
We will compute the density $\mu (\mathcal{T}_n(\ell))$ and its limit as $n\to \infty$\,, show that this limit increases to $1$ as $\ell\to \infty$\,, and establish its asymptotics.

\smallskip
We start with a lemma which will play an important role in our proof (as well as in Section \ref{sec:lim} below).

\begin{lemma}\label{lemma:Cp}
For any positive number $x\le 1/2$, the positive sequence $\{[1/x,k]\}_{k=1}^{\infty}$ is decreasing and thus has a limit as $k\to \infty$:
\begin{equation}\label{eq:C(x)}
C(x):=\left( 1-x\right) ( 1-x^{2}) \cdots \in \left.\left[  e^{-2x/(1-x)},1\right)\right..
\end{equation}
This also implies that  $C(x)\to 1$ as $x\to 0$ and that $[1/x,k]\in [e^{-2x/(1-x)},1)$ for all $x\in(0,1/2]$ and $k\ge 1$. 

In particular, when $x=1/p$, we have 
\begin{equation}\label{eq:C_p}
[p,k]\downarrow C_p:=C(1/p) 
\in \left.\left[ e^{-2/(p-1)},1\right)\right. \subseteq  \left.\left[ e^{-2},1\right)\right.,  \  \ {\mathrm{as}}\ \ k\to \infty\,,
\end{equation}
$C_p\to 1$ as $p\to \infty$\,, and $[p,k]\in [e^{-2/(p-1)},1)$ for all $p$ and $k\ge 1$. 
\end{lemma}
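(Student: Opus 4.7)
The plan is to establish monotonicity and positivity of the finite products $[1/x,k]=\prod_{j=1}^k(1-x^j)$ first, which is immediate, and then concentrate all the work on the lower bound $C(x)\ge e^{-2x/(1-x)}$, which is the only step that requires real estimation.

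First I would observe that for $0<x\le 1/2<1$ every factor $1-x^j$ lies in $[1/2,1)$, so each $[1/x,k]$ is strictly positive and strictly less than $1$. Moreover $[1/x,k+1]/[1/x,k]=1-x^{k+1}\in(0,1)$, so the sequence is strictly decreasing and hence convergent to a limit $C(x)\in[0,1)$. This immediately yields $[1/x,k]\in[C(x),1)$ for every $k\ge 1$, so once the lower bound on $C(x)$ is in hand, the claim $[1/x,k]\in[e^{-2x/(1-x)},1)$ will follow with no further work.

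The main obstacle is getting the explicit lower bound. For this I would pass to logarithms, writing
\begin{equation*}
\log C(x)=\sum_{j=1}^{\infty}\log(1-x^j),
\end{equation*}
and apply the elementary inequality $\log(1-y)\ge -y/(1-y)$ for $y\in[0,1)$ (which follows by comparing the series $\sum y^k/k$ with $\sum y^k$ termwise). Substituting $y=x^j$ gives $\log(1-x^j)\ge -x^j/(1-x^j)$, and the hypothesis $x\le 1/2$ forces $1-x^j\ge 1-x\ge 1/2$, hence $x^j/(1-x^j)\le 2x^j$. Summing the geometric series then yields
\begin{equation*}
\log C(x)\ge -2\sum_{j=1}^{\infty}x^j=-\frac{2x}{1-x},
\end{equation*}
which exponentiates to the desired bound $C(x)\ge e^{-2x/(1-x)}$. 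The conclusion $C(x)\to 1$ as $x\to 0$ is then read off directly from this bound since $2x/(1-x)\to 0$.

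Finally, the specialization to $x=1/p$ requires no separate argument: substituting $x=1/p$ (so $1/x=p$ and $2x/(1-x)=2/(p-1)$) and noting that $p\ge 2$ implies $2/(p-1)\le 2$ gives $[p,k]\downarrow C_p\in[e^{-2/(p-1)},1)\subseteq[e^{-2},1)$ and $C_p\to 1$ as $p\to\infty$. I do not anticipate any subtlety beyond the logarithmic estimate; the rest is bookkeeping.
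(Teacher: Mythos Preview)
Your proof is correct and follows essentially the same approach as the paper: both establish monotonicity from the factors $1-x^j\in(0,1)$, then obtain the lower bound by taking logarithms, using the inequality $\log(1-y)\ge -y/(1-y)$ (the paper writes it as $\ln y\ge -(1-y)/y$ and proves it via a derivative, you via series comparison), bounding $x^j/(1-x^j)\le 2x^j$ from $x\le 1/2$, and summing the geometric series. The remaining claims are read off identically in both proofs.
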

\begin{proof}
The sequence  $[1/x,k]$ is strictly decreasing in $k$ because  $0<1-x^{j}<1$ for all $j\ge 1$.

To get the lower bound for $C(x)$\,, we will use the following inequality:
\begin{equation}\label{eq:ln y}
\ln y \ge - \frac{1-y}{y}\,,\quad \forall\  y\in (0,1]\,.
\end{equation}
(To see this, let $\psi(y):=\ln y + (1-y)/y$, then $\psi'(y)=1/y-1/y^2\le 0$\,. Hence $\psi(y)\le \psi(1)=0$\,.)

Applying \eqref{eq:ln y} with $y=1-x^j$ $(j\ge 1)$  yields
\begin{equation}\label{eq:ln(1-x^j)}
\ln \left(1-x^j \right) 
\ge -\frac{x^j}{1-x^j}
\ge -2 x^j
\end{equation}
as $x^j\le 1/2$\,.
Summing up \eqref{eq:ln(1-x^j)} over $j$ from $1$ to $k$, we get
\begin{equation*}
\ln\, [1/x,k]
\ge -\sum_{j=1}^{k}2 x^j
>-\sum_{j=1}^{\infty}2 x^j
=-\frac{2x}{1-x}\,.
\end{equation*}
Hence $C(x)=\lim_{k\to \infty}[1/x,k] \ge e^{-2x/(1-x)}$.
\end{proof}

\begin{theorem}\label{thm:l}
We have

\noindent
{\rm (i)} 
\begin{equation}\label{eq:Z_n(l)}
\mu (\mathcal{T}_n(\ell)) 
=\prod_{p} Z_n(p,\ell)
= \frac{1}{\prod_{i=2}^{n}\zeta(i)}  \prod_{p}Y_n(p,\ell)=:Z_n(\ell)\,,
\end{equation} 
where
\begin{equation*} 
Z_n(p,\ell)=\mu_p (\mathcal{T}_n(\ell)) 
= [p,n] \sum_{i=0}^{\ell} \frac{p^{-i^2}[p,n]}{[p,i]^2 [p,n-i]}\,,
\end{equation*} 
\begin{equation}\label{eq:Y_n(p,l)}
Y_n(p,\ell)=\frac{[p,1]}{[p,n]} Z_n(p,\ell)
=[p,1]\sum_{i=0}^{\ell}\frac{p^{-i^2}[p,n]}{[p,i]^2 [p,n-i]}\,;
\end{equation} 

\noindent
{\rm (ii)} 
\begin{equation}\label{eq:Y(p,l)}
Y_n(p,\ell)\, \uparrow\, [p,1]\sum_{i=0}^{\ell}\frac{p^{-i^2}}{[p,i]^2}=:Y(p,\ell)  \ \ {\mathrm{as}}\ \  n\to \infty\,,\quad Y(p,\ell) \, \uparrow\, \frac{[p,1]}{C_p}  \ \ {\mathrm{as}}\  \ \ell\to \infty\,,
\end{equation} 
and
\begin{equation}\label{eq:Z(p,l)}
\mu_p (\mathcal{T}_n(\ell)) =Z_n(p,\ell)
\to \frac{C_p} {[p,1]} Y(p,\ell)
=C_p \sum_{i=0}^{\ell}\frac{p^{-i^2}}{[p,i]^2}
=: Z(p,\ell)  \ \  {\mathrm{as}}\ \ n\to \infty\,,
\end{equation} 
where $C_p=(1-p^{-1})(1-p^{-2})\cdots$ as defined in \eqref{eq:C_p} and \eqref{eq:C(x)}, 
then it follows from \eqref{eq:Y(p,l)} that
\begin{equation}\label{eq:lim Z(p,l)}
Z(p,\ell) \uparrow 1\ \ {\mathrm{as}}\ \ell\to \infty\,;
\end{equation} 

\noindent
{\rm (iii)} 
\begin{equation}\label{eq:Z(l)}
\mu (\mathcal{T}_n(\ell)) = Z_n(\ell)
 \to  \frac{1}{\prod_{i=2}^{\infty}\zeta(i)}  \prod_p Y(p,\ell) 
= \prod_p Z(p,\ell) 
=: Z(\ell)  \  \ {\mathrm{as}}\ \ n\to \infty\,,
\end{equation}  
and $Z(\ell) \uparrow 1$ as $\ell\to \infty$\,;

\noindent
{\rm (iv)} 
\begin{equation}\label{eq:asymZ(p,l)}
\lim_{n\to \infty} \mu_p (\mathcal{T}_n(\ell)) 
= Z(p,\ell)=1-C_p^{-1} p^{-(\ell+1)^2}\left[ 1- \frac{2}{p^2-p}\cdot p^{-\ell}+O\left( p^{-2\ell}\right) \right] \  \ {\mathrm{as}}\ \ \ell\to \infty\,;
\end{equation}
more precisely, this $O\left( p^{-2\ell}\right)\in(0,2p^{-2\ell})$\,; 

\noindent
{\rm (v)} 
\begin{equation}\label{eq:asymZ(l)}
\lim_{n\to \infty} \mu (\mathcal{T}_n(\ell)) 
= Z(\ell)=1-C_2^{-1}\cdot 2^{-(\ell+1)^2}\left[ 1- 2^{-\ell}+O\left( 4^{-\ell}\right) \right]\  \ {\mathrm{as}}\ \ \ell\to \infty\,,
\end{equation}
where $ C_2^{-1}\approx3.46275\,.$

\smallskip
Parts {\rm (ii)}  and {\rm (iv)}  also hold with $p=1/x$ for any $x\in (0,1/2]$.
\end{theorem}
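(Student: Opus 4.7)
The plan is to dispatch parts (i)--(v) in turn. \textbf{For (i)}, observe that $\mathcal{T}_n(\ell)$ is the set \eqref{eq:S} with $m=n$, $r=n-\ell$, and $d_1=\cdots=d_{n-\ell}=1$. Theorem \ref{thm:mu(S)} (with $z=1$ and all $s_j=0$) gives $\mu(\mathcal{T}_n(\ell))=\prod_p\mu_p(\mathcal{T}_n(\ell))$. Modulo $p$, $\mathcal{T}_n(\ell)$ is the disjoint union $\bigsqcup_{a_1=n-\ell}^{n}\{D_{(a_1)}\}$, so Theorem \ref{thm:p^s} applied with $s=1$ and reindexed by $i=n-a_1$ yields the closed form for $Z_n(p,\ell)$. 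Pulling out $[p,n]/[p,1]=\prod_{j=2}^{n}(1-p^{-j})$ and applying the Euler product formula \eqref{eq:Euler} at each level $j=2,\dots,n$ produces the identity involving $1/\prod_{i=2}^{n}\zeta(i)$ and $\prod_p Y_n(p,\ell)$.

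\textbf{For (ii)}, write $[p,n]/[p,n-i]=\prod_{j=n-i+1}^{n}(1-p^{-j})$; this makes each summand of $Y_n(p,\ell)$ monotone increasing in $n$ with pointwise limit~$1$, giving $Y_n(p,\ell)\uparrow Y(p,\ell)$. The passage $\ell\to\infty$ rests on the classical $q$-series identity from Durfee-square decomposition of integer partitions,
\[
\sum_{i\ge 0}\frac{q^{i^2}}{(q;q)_i^2}=\frac{1}{(q;q)_\infty}\,,
\]
which at $q=p^{-1}$ identifies $\sum_{i\ge 0}p^{-i^2}/[p,i]^2$ with $1/C_p$. The assertions for $Z_n(p,\ell)$ and $Z(p,\ell)$ then follow from $[p,n]\downarrow C_p$ (Lemma \ref{lemma:Cp}).

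\textbf{Part (iii)} requires two interchanges of limit and product over primes, both handled by Theorem \ref{thm:MCT}. Retaining only the $i=0,1$ terms gives the lower bound $Y_n(p,\ell)\ge\bigl(1-t+t^2-t^{n+1}\bigr)/(1-t)\ge 1$ (with $t=p^{-1}$, using $n,\ell\ge 1$), so $Y_n(p,\ell)\uparrow Y(p,\ell)\ge 1$ and Theorem \ref{thm:MCT} yields $\prod_p Y_n(p,\ell)\to\prod_p Y(p,\ell)$; combined with $\prod_{i=2}^{n}\zeta(i)\to\prod_{i=2}^{\infty}\zeta(i)$ (convergent since $\zeta(i)-1=O(2^{-i})$), this gives $Z_n(\ell)\to Z(\ell)$. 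For $Z(\ell)\uparrow 1$, apply Theorem \ref{thm:MCT} to $Z(p,\ell)/Z(p,1)\ge 1$ (increasing in $\ell$ to $1/Z(p,1)$), obtaining $Z(\ell)/Z(1)\to 1/Z(1)$, i.e., $Z(\ell)\to 1$.

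\textbf{For (iv)--(v)}, write $1-Z(p,\ell)=C_p\sum_{i\ge\ell+1}p^{-i^2}/[p,i]^2$. The ratio of consecutive terms is $p^{-(2i+1)}/(1-p^{-(i+1)})^2$, so the series is dominated by its leading term $C_p p^{-(\ell+1)^2}/[p,\ell+1]^2$, with the remaining contribution positive and bounded by $2p^{-(\ell+1)^2-2\ell}$. Using $[p,\ell+1]=C_p/\prod_{j\ge\ell+2}(1-p^{-j})$ and expanding $\prod_{j\ge\ell+2}(1-p^{-j})=1-1/((p-1)p^{\ell+1})+O(p^{-2\ell})$, squaring, and rewriting $2/((p-1)p^{\ell+1})=(2/(p^2-p))p^{-\ell}$ yields (iv). For (v), the factor $1-Z(2,\ell)=O(2^{-(\ell+1)^2})$ dominates all others: for $p\ge 3$ and large $\ell$ one has $p^{-(\ell+1)^2}=O(2^{-(\ell+1)^2-2\ell})$ (since $\log_2 p-1>0$), so the $p\ge 3$ contributions are absorbed into the $O(4^{-\ell})$ error; plugging $p=2$ into (iv) with $2/(p^2-p)=1$ gives the claimed expansion. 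The extension to $p=1/x$, $x\in(0,1/2]$, is automatic, as none of the above estimates uses primality. The main obstacle is identifying and applying the Durfee-square identity in (ii), which drives the $\ell\to\infty$ limits throughout and determines the precise constant $1/C_p$ governing the tail in (iv).
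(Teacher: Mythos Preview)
Your proof is correct and largely parallels the paper's. The one substantive difference is in part~(ii): you invoke the Durfee-square identity
\[
\sum_{i\ge 0}\frac{q^{i^2}}{(q;q)_i^{2}}=\frac{1}{(q;q)_\infty}
\]
as a known input to conclude $Y(p,\ell)\uparrow [p,1]/C_p$. The paper instead \emph{derives} this limit self-containedly by a sandwich argument: from $Y_\ell(p,\ell)\le Y_n(p,\ell)\le Y_n(p,n)$ and the probabilistic identity $\mu_p(\mathcal{T}_n(n))=1$ (which forces $Y_n(p,n)=[p,1]/[p,n]$), one gets $[p,1]/[p,\ell]\le Y(p,\ell)\le [p,1]/C_p$, and letting $\ell\to\infty$ via Lemma~\ref{lemma:Cp} finishes. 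The paper then notes (in a remark) that this argument \emph{proves} Euler's identity as a byproduct. Your route is shorter if the identity is granted; the paper's is self-contained and explains why the constant $1/C_p$ is forced by the fact that probabilities sum to~$1$. A minor secondary difference: in~(iii) you push $Z(\ell)\uparrow 1$ through Theorem~\ref{thm:MCT} applied to $Z(p,\ell)/Z(p,1)$, which tacitly uses $Z(1)>0$; the paper applies Theorem~\ref{thm:MCT} directly to the $Y(p,\ell)$'s, which are already $\ge 1$, avoiding that side condition.
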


Figure \ref{fig:1} and Table \ref{table:1} below illustrate the asymptotics \eqref{eq:asymZ(l)} of $Z(\ell)$ and fast rate of convergence.

\begin{figure} [!h]
\caption[]{Asymptotics of $Z(\ell)$}\label{fig:1}
\vspace{0.2cm}
\centering
\begin{tikzpicture}
\begin{axis}[
width=3in, height=3in,
axis x line=middle,axis y line=middle,
xlabel = $\ell$,
ylabel = $ 2^{(\ell+1)^2}(1-Z(\ell))$,
xmax = 22,xmin = 1,
ymax = 3.7,ymin = 2.4
]
\addplot[
color = black, fill = black,
mark = *, only marks
]  coordinates{
(1, 2.44902557224)
(2,2.75103562616)
(3, 3.06085395424)
(4, 3.25359037644)
(5, 3.35635172814)
(6, 3.40909705378)
(7, 3.43580813230)
(8, 3.44924885316)
(9, 3.45599059345)
(10, 3.45936681921)
(11, 3.46105627233)
(12, 3.46190133412)
(13, 3.46232394884)
(14, 3.46253527716)
(15, 3.46264094656)
(16, 3.46269378257)
(17, 3.46272020090)
(18, 3.46273341015)
(19, 3.46274001480)
(20, 3.46274331712)
};
\end{axis}
\end{tikzpicture}
\hspace{1cm}
\centering
\begin{tikzpicture}
\begin{axis}[
width=3in, height=3in,
axis x line=middle,axis y line=middle,
xlabel = $\ell$,
ylabel = $-\!\ln (1- C_2\, 2^{(\ell+1)^2}(1\!-\!Z(\ell)))\, /\ln 2$,
xmax = 24.5,xmin = 1,
ymax = 24.5,ymin = 1
]
\addplot[
color = black, fill = black,
mark = *, only marks
]  coordinates{
(1, 1.77225611430)
(2, 2.28255339912)
(3, 3.06085395424)
(4, 4.04926385851)
(5, 5.02441603986)
(6, 6.01220652280)
(7, 7.00610418193)
(8, 8.00305233425)
(9, 9.00152622794)
(10, 10.0007631292)
(11, 11.0003815684)
(12, 12.0001907852)
(13, 13.0000953928)
(14, 14.0000476965)
(15, 15.0000238483)
(16, 16.0000119242)
(17, 17.0000059621)
(18, 18.0000029811)
(19, 19.0000014906)
(20, 20.0000007454)
};
\end{axis}
\end{tikzpicture}

\end{figure}

\begin{table}[!h]
\caption{Asymptotics of $Z(\ell)$}\label{table:1}
\begin{tabular}{@{} rclcc @{}}
\hline \rule{0pt}{0.5cm}
$\ell$ & \multicolumn{1}{c}{$Z(\ell)$} & \multicolumn{1}{c}{$1-Z(\ell)$} & \multicolumn{1}{c}{$2^{(\ell+1)^2}(1\!-\!Z(\ell))$} & \multicolumn{1}{c}{$-\!\ln [1- C_2\, 2^{(\ell+1)^2}(1\!-\!Z(\ell))]\,/\ln 2$} \vspace{0.05cm}\\
\hline \rule{0pt}{0.5cm}
$1$ & 0.846935901735 & $1.53064098265\times 10^{-1}$ & 2.44902557224 & 1.77225611430  \\
$2$ & 0.994626883543 & $5.37311645734\times 10^{-3}$ & 2.75103562616 & 2.28255339912  \\
$3$ & 0.999953295075 & $4.67049248389\times 10^{-5}$ & 3.06085395424 & 3.10703467197 \\
$4$ & 0.999999903035 & $9.69645493161\times 10^{-8}$ & 3.25359037644 & 4.04926385851  \\
$5$ & 0.999999999951 & $4.88413458245\times 10^{-11}$ & 3.35635172814 & 5.02441603986  \\
$6$ & 1.000000000000 & $6.05577286766\times 10^{-15}$ & 3.40909705378 & 6.01220652280 \\
$7$ & 1.000000000000 & $1.86255532064\times 10^{-19}$ & 3.43580813230 & 7.00610418193  \\
$8$ & 1.000000000000 & $1.42657588960\times 10^{-24}$ & 3.44924885316 & 8.00305233425  \\
$9$ & 1.000000000000 & $2.72629586798\times 10^{-30}$ & 3.45599059345 & 9.00152622794
\\
${10}$ & 1.000000000000 & $1.30126916909\times 10^{-36}$ & 3.45936681921 & 10.0007631292
\vspace{0.05cm}
\\ \hline
\end{tabular}
\end{table}

\begin{remark} 
The convergence result \eqref{eq:lim Z(p,l)} in Theorem \ref{thm:l}\,(ii) with $p=1/x$ implies Euler's identity:
\begin{equation*}
\sum_{i=0}^{\infty}\frac{x^{i^2}}{(1-x)^2(1-x^2)^2\cdots(1-x^i)^2}
=\frac{1}{(1-x)(1-x^2)\cdots}\,.
\end{equation*}
\end{remark}

\begin{proof} 
(i)
The first equality follows from Theorem \ref{thm:mu(S)} with $\mathcal{S}=\mathcal{T}_n(\ell)$\,, $r=n-\ell$\,, $d_i=z=1$\,, $s_j=0$ for all $i,j$, 
and Theorem \ref{thm:p^s} with  $s=1$\,,  $a_1=n,n-1,\dots,n-\ell$\,, respectively.

The second equality follows from definition \eqref{eq:Y_n(p,l)} and \eqref{eq:zeta}.

\smallskip
(ii) 
We observe that 
\begin{equation*} 
\frac{[p,n]}{[p,n-i]} 
= \left( 1-p^{-n}\right) \left( 1-p^{-(n-1)}\right)  \cdots\left( 1-p^{-(n-i+1)}\right) \, \uparrow\, 1\ \ {\mathrm{as}}\ \ n\to \infty\,.
\end{equation*} 
This leads to the first result of \eqref{eq:Y(p,l)}.

Since $Y_n(p,\ell)$ is also increasing in $\ell$ by definition \eqref{eq:Y_n(p,l)}, so is $Y(p,\ell)$, and for all $\ell\le n$, we have
\begin{equation}\label{eq:Y_l(p,l)}
Y_{\ell}(p,\ell)\le Y_n(p,\ell)\le Y_n(p,n)\,.
\end{equation}
Further, we derive from
$$1=\mu_p(\mathcal{T}_n(n)) =\frac{[p,n]}{[p,1]} Y_n(p,n)\,,$$
that 
\begin{equation*}
Y_n(p,n)=\frac{[p,1]}{[p,n]}\quad {\mathrm{and\ similarly,}}\quad Y_{\ell}(p,{\ell})=\frac{[p,1]}{[p,\ell]}\,.
\end{equation*}
Plugging into \eqref{eq:Y_l(p,l)}, we obtain
\begin{equation*}
\frac{[p,1]}{[p,\ell]}\le Y_n(p,\ell)\le \frac{[p,1]}{[p,n]}<\frac{[p,1]}{C_p}\,.
\end{equation*}
Taking $n\to \infty$ yields
\begin{equation*} 
\frac{[p,1]}{[p,\ell]}\le Y(p,\ell)\le \frac{[p,1]}{C_p}\,.
\end{equation*}
Then taking $\ell\to \infty$ and applying Lemma \ref{lemma:Cp} leads to the second result of \eqref{eq:Y(p,l)}.

Finally, on the strength of \eqref{eq:Y(p,l)} and Lemma \ref{lemma:Cp}, we obtain \eqref{eq:Z(p,l)} from  definition \eqref{eq:Y_n(p,l)}:
\begin{equation*} 
Z_n(p,\ell) = \frac{[p,n]} {[p,1]} Y_n(p,\ell)
\to \frac{C_p} {[p,1]} Y(p,\ell)  \ \  {\mathrm{as}}\ \ n\to \infty\,.
\end{equation*}

This proof also carries over to $p=1/x$ for any $x\in (0,1/2]$.

\smallskip
(iii) 
It follows from definitions \eqref{eq:Z_n(l)} and  \eqref{eq:Y_n(p,l)} that
\begin{equation} \label{eq:prod Z_n(p,l)}
Z_n(\ell) = \prod_p Z_n(p,\ell) 
=\prod_p \frac{[p,n]}{[p,1]} \prod_p Y_n(p,\ell) \,.
\end{equation} 

By virtue of \eqref{eq:zeta}, we have
\begin{equation} \label{eq:prod1}
\prod_p \frac{[p,n]}{[p,1]}=\frac{1}{\prod_{i=2}^n \zeta(i)}
\to \frac{1}{\prod_{i=2}^{\infty} \zeta(i)} \approx 0.435757
\ \  {\mathrm{as}}\ \ n\to \infty\,.
\end{equation} 
Further, this limit
\begin{equation*}
\frac{1}{\prod_{i=2}^{\infty} \zeta(i)} 
=\prod_{i=2}^{\infty}  \prod_p \left(1-p^{-i} \right) 
= \prod_p \prod_{i=2}^{\infty}  \left(1-p^{-i} \right) 
=\prod_p \frac{C_p}{[p,1]}\,.
\end{equation*} 
Hence
\begin{equation} \label{eq:prod1'}
\prod_p \frac{[p,n]}{[p,1]}
\to \prod_p \frac{C_p}{[p,1]}  \ \ {\mathrm{as}}\ \ n\to \infty\,.
\end{equation}
We can also deduce \eqref{eq:prod1'} from Theorem \ref{thm:MCT} with $x_{i,j}=[p_i,1]/[p_i,j]$ 
since 
$$1\le \frac{[p,1]}{[p,n]}\,\uparrow\,  \frac{[p,1]}{C_p} \ \ {\mathrm{as}}\ \ n\to \infty$$ 
by Lemma \ref{lemma:Cp}.

For the second product on the right-hand side of \eqref{eq:prod Z_n(p,l)}, from \eqref{eq:Z_nPf} in the proof of Theorem \ref{thm:cyclic SNF}\,(i), we see that $Y_n(p,1)=1+p^{-2}+p^{-3}+\cdots+p^{-n}>1$\,. 
Since $Y_n(p,\ell)$ is increasing in $\ell$, we have $Y_n(p,\ell)>1$ as well.
In conjunction with \eqref{eq:Y(p,l)}, 
we can apply Theorem \ref{thm:MCT} with $x_{i,j}=Y_j(p_i,\ell)$ to obtain
\begin{equation} \label{eq:prod2}
\prod_p Y_n(p,\ell)\,
\uparrow\, \prod_p Y(p,\ell)  \  \ {\mathrm{as}}\ \ n\to \infty\,.
\end{equation} 
Plugging \eqref{eq:prod1}, \eqref{eq:prod2} and \eqref{eq:prod1'} into \eqref{eq:prod Z_n(p,l)}
 along with definition \eqref{eq:Z(p,l)} yields \eqref{eq:Z(l)}:
\begin{equation}\label{eq:lim Z_n(l)}
Z_n(\ell)
\to  \frac{1}{\prod_{i=2}^{\infty}\zeta(i)}  \prod_p Y(p,\ell) 
= \prod_p \frac{C_p}{[p,1]} \prod_p Y(p,\ell)
= \prod_p \frac{C_p}{[p,1]} Y(p,\ell)
= \prod_p Z(p,\ell)
\end{equation} 
as $n\to \infty$\,.

Since $Y_n(p,\ell)>1$ and $Y_n(p,\ell)$ is increasing in $\ell$, so is $Y(p,\ell)$ (recall \eqref{eq:Y(p,l)}). 
Thus we can apply Theorem \ref{thm:MCT} with $x_{i,j}=Y(p_i,j)$ to obtain
\begin{equation*} 
\prod_p Y(p,\ell)\,
\uparrow\, \prod_p \frac{[p,1]}{C_p} \  \ {\mathrm{as}}\ \ \ell\to \infty\,.
\end{equation*} 
Finally, we plug this into  the second expression of the limit of $Z_n(\ell)$ in \eqref{eq:lim Z_n(l)}:
\begin{equation*} 
Z(\ell)  =\lim_{n\to \infty} Z_n(\ell)
=\prod_p \frac{C_p}{[p,1]} \prod_p Y(p,\ell)\,
\uparrow\, 1 \  \ {\mathrm{as}}\ \ \ell\to \infty\,.
\end{equation*} 

\smallskip
(iv) We prove for the more general case $p=1/x$ with $x\in (0,1/2]$.
Let 
$$V(x,\ell):=Z(1/x,\ell)=C(x) \sum_{i=0}^{\ell}\frac{x^{i^2}}{[1/x,i]^2}\,.$$
Recall that $C(x)=(1-x)(1-x^2)\cdots$ and $[1/x,i]=(1-x)(1-x^2)\cdots(1-x^i)$.

Since $V(x,\ell)=Z(1/x,\ell)\uparrow 1$ as $\ell\to \infty$ by \eqref{eq:lim Z(p,l)}, we have
$$\frac{1}{C(x)}
=\sum_{i=0}^{\infty}\frac{x^{i^2}}{[1/x,i]^2}
=\sum_{i=0}^{\ell}\frac{x^{i^2}}{[1/x,i]^2}
+\sum_{i=\ell+1}^{\infty}\frac{x^{i^2}}{[1/x,i]^2}
=\frac{V(x,\ell)}{C(x)}+\sum_{i=\ell+1}^{\infty}\frac{x^{i^2}}{[1/x,i]^2}\,.$$
Thus for any $x\in(0,1/2]$, we obtain
\begin{align}  \nonumber
&\  x^{-(\ell+1)^2}C(x)\left[1-V(x,\ell)\right] 
=x^{-(\ell+1)^2}C^2(x) \left[\frac{1}{C(x)}-\frac{V(x,\ell)}{C(x)} \right]  =x^{-(\ell+1)^2}  \sum_{i=\ell+1}^{\infty}\frac{C^2(x)x^{i^2}}{[1/x,i]^2}\\ \nonumber
=&\  \sum_{i=\ell+1}^{\infty}x^{i^2-(\ell+1)^2} \prod_{j=i+1}^{\infty}\left(1-x^j\right)^2
=\prod_{j=\ell+2}^{\infty}\left(1-x^j\right)^2
+\sum_{i=\ell+2}^{\infty}x^{i^2-(\ell+1)^2} \prod_{j=i+1}^{\infty}\left(1-x^j\right)^2\\ \label{eq:V(x,l)}
=&\ \left( 1-2\sum_{j=\ell+2}^{\infty}x^j+\Delta_1\right) 
+\Delta_2
= 1-\frac{2x^{\ell+2} }{1-x}+\Delta_1
+\Delta_2\,,
\end{align}
where
\begin{equation}\label{eq:Delta2}
0< \Delta_2:=\sum_{i=\ell+2}^{\infty}x^{i^2-(\ell+1)^2} \prod_{j=i+1}^{\infty}\left(1-x^j\right)^2
<\sum_{i=\ell+2}^{\infty}x^{i^2-(\ell+1)^2} 
<\sum_{i=2\ell+3}^{\infty}x^{i} =\frac{x^{2\ell+3} }{1-x}<x^{2\ell} 
\end{equation}
and
\begin{equation}\label{eq:Delta1}
0\le \Delta_1:=\prod_{j=\ell+2}^{\infty}\left(1-x^j\right)^2-\left( 1-2\sum_{j=\ell+2}^{\infty}x^j\right) 
\le 4\sum_{j,j'\ge \ell+2}x^{j+j'}
=\frac{4x^{2\ell+4} }{(1-x)^2 }
\le  x^{2\ell}\,,
\end{equation}
as $0<x\le 1/2$\,, thanks to the inequality:
\begin{equation*} 
0\le \prod_{i=1}^{u}(1-\delta_i)
-\left(  1-\sum_{i=1}^{u}\delta_i\right)
\le \sum_{1\le i<j\le u}\delta_i \delta_j \end{equation*}
for $\delta_1,\delta_2,\dots,\delta_{u}\in [0,1]$, which can be proved easily by induction on $u$ (the left inequality was proved in \eqref{eq:1-delta}. 
For the right inequality, base cases: $u=1,2$; inductive step from $u$ to $u+1$: $(1-\delta_{u+1}) \prod_{i=1}^{u}(1-\delta_i)
\le (1-\delta_{u+1}) (1-\sum_{i=1}^{u}\delta_i+\sum_{1\le i<j\le u}\delta_i \delta_j ) 
=1-\sum_{i=1}^{u+1}\delta_i+\sum_{1\le i<j\le u+1}\delta_i \delta_j-\delta_{u+1}\sum_{1\le i<j\le u}\delta_i \delta_j
\le 1-\sum_{i=1}^{u+1}\delta_i+\sum_{1\le i<j\le u+1}\delta_i \delta_j$).

\smallskip
Plugging \eqref{eq:Delta2} and \eqref{eq:Delta1} into \eqref{eq:V(x,l)} yields \eqref{eq:asymZ(p,l)}.

\smallskip
(v)
Since $Z(\ell)=\prod_p Z(p,\ell)$ by definition \eqref{eq:Z(l)} and $0\le Z(p,\ell)\le 1$ for all $p$\,, we  have $Z(\ell)\le Z(2,\ell)$. Thus it follows from (iv) that
\begin{equation}\label{eq:Z(l)<}
Z(\ell)\le Z(2,\ell)
=1-C_2^{-1} 2^{-(\ell+1)^2}\left[ 1- 2^{-\ell}+O\left( 4^{-\ell}\right) \right] \  \ {\mathrm{as}}\ \ \ell\to \infty\,.
\end{equation}

On the other hand, we notice that when $\ell\ge 2$, the $O(p^{-2\ell})$ in \eqref{eq:asymZ(p,l)} satisfies 
$$O(p^{-2\ell})<2p^{-2\ell}\le \frac{2}{p^2}\cdot p^{-\ell}<\frac{2}{p^2-p}\cdot p^{-\ell}\,,$$
thus
\begin{equation*}
Z(p,\ell) > 1-C_p^{-1} p^{-(\ell+1)^2}\,.
\end{equation*}
Hence
\begin{equation}\label{eq:Z(l)>}
Z(\ell)=\prod_p Z(p,\ell)>Z(2,\ell) \prod_{p\ge 3} \left( 1-C_p^{-1} p^{-(\ell+1)^2}\right)
\ge 1 - (1-Z(2,\ell)) - \sum_{p\ge 3} C_p^{-1} p^{-(\ell+1)^2}.
\end{equation}
Here we took advantage of the inequality \eqref{eq:1-delta}. 
Thanks to \eqref{eq:C_p}, the positive sum
\begin{align*}
\sum_{p\ge 3} C_p^{-1} p^{-(\ell+1)^2}
\le&\  e^2 \sum_{p\ge 3} p^{-(\ell+1)^2}
=e^2 \,2^{-(\ell+1)^2}\sum_{p\ge 3} \left( \frac{2}{\,p\,}\right)^{(\ell+1)^2}
<\ e^2 \,2^{-(\ell+1)^2}\sum_{p\ge 3} \left( \frac{2}{\,3\,}\right)^{\ell^2}\left( \frac{2}{\,p\,}\right)^{2}\\
<&\
e^2 \,2^{-(\ell+1)^2}\left( \frac{2}{\,3\,}\right)^{\ell^2}\cdot 4
=2^{-(\ell+1)^2}O(4^{-\ell})\,.
\end{align*}
Finally, combining with \eqref{eq:Z(l)<} and \eqref{eq:Z(l)>} leads to \eqref{eq:asymZ(l)}.
\end{proof}

\begin{remark}  
When $\ell=1$, in the proof of Theorem \ref{thm:cyclic SNF} we wrote $Y(1/x,1)$ as $(1-x^{6})/(1-x^{2})(1-x^{3})$ (see \eqref{eq:Y(p,1)}) in order to represent 
$Z(1)= \prod_p Y(p,1)\, / \prod_{i=2}^{\infty}\zeta(i)$ 
as the reciprocal of a product of values of the Riemann zeta function at positive integers. However, this is not the case when $\ell>1$; 
in fact, in general $Y(1/x,\ell)$ is not even a symmetric function in $x$, for instance,
$$Y\left( \frac{1}{\,x\,},2\right) 
=\frac{1 - x - x^2 + 2 x^3 - x^5 + x^6}{(1-x)^3 (1+x)^2}\,,$$
$$Y\left( \frac{1}{\,x\,},3\right) 
=\frac{1 - x - x^2 + 2 x^4 + x^5 - 2 x^6 - x^7 + x^8 + x^9 - x^{11} + x^{12}}{(1 - x)^5 (1 + x)^2 (1 + x + x^2)^2}\,.$$
\end{remark}

\section{\texorpdfstring{Properties of the SNF Distribution Function $\mu_{p^s}$}{Properties of the SNF Distribution Function mu{\_}\{p{\^{}}s\}}}
\label{sec:properties}
In this section, we first fix $p,s,m,n$ and find the maximum and minimum of the probability density function $\mu_{p^s}$ of \eqref{eq:mu_{p^s}(D)}.
Then we free $p,s,m,n$ and study the monotonicity properties and limiting behaviors of  $\mu_{p^s}(\{D_{\bm{a}}\})$, as a function of $p,s,m,n$ and ${\bm a}$ (recall from Theorem \ref{thm:p^s} the notation of vector ${\bm a}=(a_1,a_2,\dots,a_s)$ as well as its corresponding diagonal matrix $D_{\bm a}\in \mathbb{S}$). 

For convenience, we replace $m-a_i$ by $b_i$ $(0\le i\le s)$ in \eqref{eq:mu_{p^s}(D)} to get a simpler expression for $\mu_{p^s}(\{D_{\bm{a}}\})$:
\begin{equation}\label{eq:f}
f(p,s,m,n',{\bm b})
:=p^{- \sum_{i=1}^{s}(n'+b_i)b_i}\cdot
\frac{[p,n'+m][p,m]}
{[p,n'+b_s][p,b_s] \prod_{i=1}^{s} [p,b_{i-1}-b_i]}\,.
\end{equation}
Here and throughout this section, we shall assume that $p$ is a prime, that $s,m$ and $n$ are positive integers, that $n>n':=n-m\ge 0$\,, and that ${\bm b}:=(b_1,b_2,\dots,b_s)$ is an integer vector satisfying
$m=b_0\ge b_1\ge \cdots \ge b_s \ge 0$\,.

\subsection{The Maximum and  Minimum}\label{sec:max}
$ $

We show that  $f(p,s,m,n',\cdot)$ attains its maximum at either $(0,0,\dots,0)$ or $(1,1,\dots,1)$ depending on $p,s,m$ and $n'$, and its minimum at $(m,m,\dots,m)$.

\begin{theorem}\label{thm:max}
For fixed $p,m,n$ and $s$, the maximum and minimum of $f(p,s,m,n',\cdot)$ are given as follows.

\smallskip
{\rm (i)} If $p>2$\,, $s>1$ or $n'>0$\,, then
\begin{equation*}
\max_b \ f(p,s,m,n',{\bm b})
=\frac{[p,n'+m]}{[p,n']} \,,
\end{equation*}
and the maximum is achieved if and only if ${\bm b}=(0,0,\dots,0):={\bm 0}$, in other words, if the corresponding matrix $D_{\bm a}$ is full rank;

\smallskip
{\rm (ii)} If $p=2$\,, $s=1$\,, $n'=0$  and $m>1$\,, then
\begin{equation*}
\max_b \ f(p,s,m,n',{\bm b})
=\frac{[2,m]^2}{[2,1][2,m-1]}\,,
\end{equation*}
and the maximum is achieved if and only if \,${\bm b}=(1)$;

\smallskip
{\rm (iii)}  In both Case {\rm (i)} and Case {\rm (ii)}, we have
\begin{equation*}
\min_b \ f(p,s,m,n',{\bm b})
=p^{-s(n'+m)m}\,,
\end{equation*}
and the minimum is achieved if and only if
\,${\bm b}=(m,m,\dots,m)$, in other words, if the corresponding matrix $D_{\bm a}$ is the zero matrix.

\smallskip
{\rm (iv)} If $p=2$\,, $s=1$\,, $n'=0$  and $m=1$\,, then ${\bm b}=(1)$ or $(0)$, and they have the same value of $f$: $1/2$.
\end{theorem}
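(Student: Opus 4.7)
My plan is to reduce the extremal problem for $f$ to a one-variable analysis along the diagonal $b_1=b_2=\cdots=b_s=:b^*$, and to handle the minimum via an orbit-counting argument.

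\smallskip
\emph{Step 1 (monotonicity in the first $s-1$ coordinates).} A direct computation from \eqref{eq:f} shows that, for any index $j<s$ with $b_j>b_{j+1}$, decreasing $b_j$ by $1$ multiplies $f$ by
\[
R_j=\frac{p^{\,n'+2b_j-1}\bigl(1-p^{-(b_j-b_{j+1})}\bigr)}{1-p^{-(b_{j-1}-b_j+1)}}.
\]
Because $b_j\ge 1$ forces $p^{n'+2b_j-1}\ge p$ and $b_j-b_{j+1}\ge 1$ forces $1-p^{-(b_j-b_{j+1})}\ge 1-p^{-1}$, the numerator is at least $p\cdot(p-1)/p=p-1\ge 1$, while the denominator is strictly less than $1$; hence $R_j>1$ unconditionally. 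Iterating such moves (which terminates because $\sum b_i$ strictly decreases) reduces any off-diagonal ${\bm b}$ to $(b_s,\ldots,b_s)$ with a strictly larger value of $f$. Consequently the maximum of $f$ is attained only on the diagonal.

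\smallskip
\emph{Step 2 (one-variable analysis).} Set $h(b):=f(p,s,m,n',(b,\ldots,b))$; expanding \eqref{eq:f} gives
\[
h(b)=p^{-s(n'+b)b}\cdot\frac{[p,n'+m]\,[p,m]}{[p,n'+b]\,[p,b]\,[p,m-b]},\qquad
\frac{h(b+1)}{h(b)}=\frac{p^{-s(n'+2b+1)}\bigl(1-p^{-(m-b)}\bigr)}{\bigl(1-p^{-(n'+b+1)}\bigr)\bigl(1-p^{-(b+1)}\bigr)}.
\]
For $b\ge 1$, the leading factor is at most $p^{-3}$ and the remaining fraction is at most $1/(1-p^{-2})^2$, giving a uniform bound $h(b+1)/h(b)\le p/(p^2-1)^2\le 2/9<1$; hence $h$ strictly decreases on $\{1,\ldots,m\}$. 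The location of the maximum is therefore decided by the sign of $h(1)-h(0)$: rearranging, $h(1)<h(0)\iff 1-p^{-m}<T$, where
\[
T:=p^{s(n'+1)}\bigl(1-p^{-(n'+1)}\bigr)(1-p^{-1})=p^{(s-1)(n'+1)-1}(p^{n'+1}-1)(p-1).
\]
A short case split shows $T\ge 1$ whenever $p>2$, $s>1$, or $n'>0$ (with equality only at $(p,s,n')=(2,2,0)$), so in these cases $1-p^{-m}<1\le T$ forces $h(1)<h(0)$ strictly; combined with Step~1 this gives (i) with the unique maximizer ${\bm 0}$. In the remaining regime $p=2,s=1,n'=0$ one computes $T=1/2$, whence $h(1)/h(0)=2-2^{1-m}$, which equals $1$ for $m=1$ and exceeds $1$ for $m>1$; this yields (ii) with unique maximizer $(1)$ and the degenerate tie in (iv).

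\smallskip
\emph{Step 3 (minimum).} Substituting ${\bm b}=(m,\ldots,m)$ into \eqref{eq:f} makes every bracket $[p,b_{i-1}-b_i]=[p,0]=1$, while $[p,n'+b_s][p,b_s]$ cancels $[p,n'+m][p,m]$ in the numerator, leaving $f=p^{-s(n'+m)m}=p^{-snm}$. For any other ${\bm b}$, the integer $N_{\bm b}:=p^{snm}f({\bm b})$ counts the $n\times m$ matrices over $\mathbb Z/p^s\mathbb Z$ with $\mathrm{SNF}=D_{\bm a}$ and is at least $1$, so $f({\bm b})\ge p^{-snm}$. Under the hypotheses of (i) or (ii), a nonzero $D_{\bm a}$ has a nontrivial $GL_n(\mathbb Z/p^s)\times GL_m(\mathbb Z/p^s)$-orbit: a row swap works whenever $n\ge 2$, and multiplication by a non-unit handles $n=m=1$ (which is excluded from (i)/(ii) only when $p=2,s=1$, precisely the case (iv)). Thus $N_{\bm b}\ge 2$ for ${\bm b}\ne(m,\ldots,m)$, establishing (iii) with uniqueness. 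Finally, (iv) is a direct evaluation: the two $1\times 1$ matrices over $\mathbb F_2$ each form their own orbit, and plugging into \eqref{eq:f} gives $f=1/2$ at both ${\bm b}=(0)$ and $(1)$.

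\smallskip
The main obstacle is the regime bookkeeping in Step~2: one must track $T$ carefully through all combinations of $(p,s,n')$ to isolate the single exceptional family $p=2,s=1,n'=0$ where $T<1$, and combine the strict diagonal-dominance of Step~1 with the strict 1D monotonicity of Step~2 to conclude \emph{uniqueness} of the maximizer in cases (i) and (ii).
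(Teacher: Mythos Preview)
Steps 1 and 2 mirror the paper's argument: your diagonal reduction is the paper's Lemma~\ref{lemma:b_i equal}, and your one-variable ratio analysis is its Lemma~\ref{lemma:v} (your $h$ is its $\varphi$, and your case split on $T$ repackages the paper's case split cleanly). These parts are correct.

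Step 3 takes a different route. The paper proves a second monotonicity lemma (Lemma~\ref{lemma:b_i=m}, the mirror of your Step~1): increasing any $b_i$ with $i<s$ and $b_i<b_{i-1}$ strictly decreases $f$, forcing the minimum onto $\{(m,\dots,m,b):0\le b\le m\}$; since $f\bigl(p,s,m,n',(m,\dots,m,b)\bigr)=p^{-(s-1)(n'+m)m}\,\varphi(b)$ with $\varphi$ taken at $s=1$, this reduces to Step~2. Your orbit-counting idea is more conceptual and avoids a second ratio computation, but it has a gap. When $n=m=1$ you need not merely that $\mathbb{Z}/p^s\mathbb{Z}$ has more than one unit, but that some unit actually \emph{moves} the element in question. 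For $p=2$ and $s\ge 2$, the element $2^{s-1}\in\mathbb{Z}/2^s\mathbb{Z}$ is fixed by every unit (any odd $u$ satisfies $u\cdot 2^{s-1}\equiv 2^{s-1}$), so its orbit has size~$1$ even though there are $2^{s-1}\ge 2$ units. Concretely, at $(p,s,m,n')=(2,2,1,0)$ one computes $f(1,0)=f(1,1)=1/4$, so the ``only if'' in part~(iii) actually \emph{fails} here. The paper's own proof has the same blind spot: its case analysis for the minimum covers ``$p>2$ or $n'>0$'' and ``$p=2,\,n'=0,\,m>1$'' but never $p=2,\,n'=0,\,m=1,\,s\ge2$, which falls under hypothesis~(i) via $s>1$. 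So your gap is real, but it is a gap in the statement rather than a defect particular to your method; away from this edge case your orbit argument goes through.
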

\begin{proof}
(i) We proceed by the following two lemmas which show that the $b_i$'s are all equal  at the maximum of $f(p,s,m,n',\cdot)$, and that $b_i=0$ or $1$ depending on $p,s,m$ and $n'$\,. 

Let ${\bm b}=(b_1,b_2,\dots,b_s)$ be an arbitrary $s$-tuple with $m=b_0\ge b_1\ge \cdots \ge b_s \ge 0$\,.

\begin{lemma}\label{lemma:b_i equal}
If $b_i>b_{i+1}$ for some $i\in \{1,2,\dots,s-1\}$ $(s\ge 2)$, then we have
\begin{equation*}
f(p,s,m,n',{\bm b}')> f(p,s,m,n',{\bm b}),
\end{equation*}
where ${\bm b}'=(b'_1,b'_2,\dots,b'_s)$ with $b'_i=b_i-1$ and $b'_j=b_j$ for all $j\neq i$. Note that ${\bm b}'$ still satisfies $m=b'_0\ge b'_1\ge \cdots \ge b'_s \ge 0$\,.
\end{lemma}

\begin{lemma}\label{lemma:v}
Let $\varphi(b):=f\big( p,s,m,n',(b,b,\dots,b)\big)$,  $0\le b\le m$, then for all $0\le b<m$, we have 
\begin{equation*}
\frac{\varphi(b)}{\varphi(b+1)}
\begin{cases} 
<1\,, & \mathrm{if} \ p=2\,,\ s=1\,,\ n'=0\,,\ m>1\,\  \mathrm{and}\,\ b=0 \\ 
=1\,, & \mathrm{if}\ p=2\,,\ s=1\,,\ n'=0\,,\ m=1\,\  \mathrm{and}\,\ b=0\\
>1\,, & \mathrm{otherwise} 
\end{cases} .
\end{equation*}
\end{lemma}

These lemmas are proved right below this proof. 
Thanks to Lemma \ref{lemma:b_i equal}, the maximum point of $f(p,s,m,n',\cdot)$ must have the form $(b,b,\dots,b)$ with $0\le b \le m$\,. Therefore it reduces to finding the maximum of $\varphi(\cdot)$.

Since $p>2$\,, $s>1$ or $n'>0$\,, it follows from Lemma \ref{lemma:v} that 
\begin{equation}\label{eq:v0}
\varphi(0)>\varphi(1)>\cdots>\varphi(m)\,.\end{equation}
Hence the maximum of $\varphi(\cdot)$ is $\varphi(0)=\frac{[p,n'+m]}{[p,n']}$\,, as desired. 

\smallskip
(ii) When $p=2$\,, $s=1$\,, $n'=0$  and $m>1$\,, it follows from Lemma \ref{lemma:v} that 
\begin{equation}\label{eq:v}
\varphi(0)<\varphi(1) \quad {\rm{and}} \quad \varphi(1)>\cdots>\varphi(m)\,.
\end{equation}
Hence the maximum of $\varphi(\cdot)$ is $\varphi(1)=\frac{[2,m]^2}{[2,1][2,m-1]}$\,, as desired.

\smallskip
(iii) We proceed by the following lemma (proved right below this proof) which shows that at the minimum of $f(p,s,m,n',\cdot)$, all the $b_i$'s $(i>1)$ equal   $m$.

\begin{lemma}\label{lemma:b_i=m}
If $b_i<b_{i-1}$ for some $i\in \{1,2,\dots,s-1\}$ $(s\ge 2)$, then we have
\begin{equation*}
f(p,s,m,n',{\bm b}')< f(p,s,m,n',{\bm b}),
\end{equation*}
where ${\bm b}'=(b'_1,b'_2,\dots,b'_s)$ with $b'_i=b_i+1$ and $b'_j=b_j$ for all $j\neq i$. Note that ${\bm b}'$ still satisfies $m=b'_0\ge b'_1\ge \cdots \ge b'_s \ge 0$\,.
\end{lemma}

Thanks to Lemma \ref{lemma:b_i=m}, the minimum point of $f(p,s,m,n',\cdot)$ must have the form $(m,m,\dots,m,b)$ with $0\le b \le m$\,. Further, since $f\big(p,s,m,n',(m,m,\dots,m,b)\big)
=p^{-(s-1)(n'+m)m}\cdot \varphi(b)$ (by \eqref{eq:f}),  
where $\varphi$ is defined in Lemma \ref{lemma:v} with $s=1$\,, it reduces to finding the minimum of $\varphi(\cdot)$.

\smallskip
{\it Case} (i) 
When $p>2$ or $n'>0$\,, it follows from \eqref{eq:v0} that the minimum of $\varphi(\cdot)$ is $\varphi(m)=p^{-(n'+m)m}$\,. 

{\it Case} (ii) 
When $p=2$\,, $n'=0$ and $m>1$\,, it follows from \eqref{eq:v} that  the minimum of ${\varphi}(\cdot)$ is $\min\,\{{\varphi}(0),{\varphi}(m)\}$. 
Since
$${\varphi}(0)=[2,m]>\left(1-2^{-1} \right)^m=2^{-m}\ge 2^{-m^2}=\varphi(m)\,,$$
the minimum of $\varphi(\cdot)$ is still ${\varphi}(m)$.

\smallskip
Hence the minimum of $f$ is always $p^{-s(n'+m)m}$ and achieved at $(m,m,\dots,m)$.
\end{proof}

\begin{proof}[Proof of Lemma \ref{lemma:b_i equal}]
It follows from definition \eqref{eq:f} that
\begin{align*}
&\frac{f(p,s,m,n',{\bm b}')}{f(p,s,m,n',{\bm b})}
=  p^{(n'+b_i)b_i-(n'+b'_i)b'_i}\cdot
\frac{ [p,b_{i-1}-b_i][p,b_i-b_{i+1}]}
{ [p,b_{i-1}-b'_i][p,b'_i-b_{i+1}]}\\
\ge&\ p\cdot
\frac{ [p,b_{i-1}-b_i][p,b_i-b_{i+1}]}
{ [p,b_{i-1}-b_i+1][p,b_i-1-b_{i+1}]}
= \ p\cdot
\frac{ 1-p^{-(b_i-b_{i+1})}}
{ 1-p^{-(b_{i-1}-b_i+1)}}
> p\left(1-p^{-1} \right) 
=p-1\ge 1\,,
\end{align*}
as desired, where in the second last inequality, we used the condition that $b_i>b_{i+1}$ to get
$1-p^{-(b_i-b_{i+1})}\ge 1-p^{-1}\,.$
\end{proof}

\begin{proof}[Proof of Lemma \ref{lemma:v}]
By the definition of $\varphi$ and \eqref{eq:f}, we obtain
\begin{align}
\nonumber
&\frac{\varphi(b)}{\varphi(b+1)}
= p^{s[(n'+b+1)(b+1)-(n'+b)b]}\cdot
\frac{[p,n'+b+1][p,b+1][p,m-b-1]}
{[p,n'+b][p,b][p,m-b]}\\
\label{eq:lemma:v}
=&\ p^{s(n'+2b+1)}\cdot
\frac{\left( 1-p^{-(n'+b+1)}\right)
\left( 1-p^{-(b+1)}\right)}
{ 1-p^{-(m-b)}}
> p^{s(n'+2b+1)}
\left( 1-p^{-1}\right)^2\,,
\end{align}
where we used the fact that 
$$1-p^{-(n'+b+1)},\ 1-p^{-(b+1)}\ge 1-p^{-1}\quad \text{and}\quad 1-p^{-(m-b)}<1\,.$$

\noindent
{\it Case 1.} 
$s(n'+2b+1)\ge 2$\,. 

The right-hand side of \eqref{eq:lemma:v} is at least
$$p^2\left( 1-p^{-1}\right)^2 = (p-1)^2\ge 1\,.$$

\noindent
{\it Case 2.} 
$p\ge 3$\,. 

The right-hand side of \eqref{eq:lemma:v} is at least
$$p\left( 1-p^{-1}\right)^2 \ge 3\left( 1-3^{-1}\right)^2 =4/3>1\,.$$

\noindent
{\it Case 3.} 
$p=2$ and $s(n'+2b+1)=1$\,, which requires that $s=1$ and $n'=b=0$\,. 

Plugging into \eqref{eq:lemma:v} yields
$$\frac{\varphi(b)}{\varphi(b+1)}
=\frac{2\left( 1-2^{-1}\right)^2}{1-2^{-m}}
=\frac{1}{2-2^{1-m}}
\begin{cases} 
<1, & \mbox{if }\ m>1 \\ 
=1, & \mbox{if }\ m=1
\end{cases} $$
and completes the proof.
\end{proof}

\begin{proof}[Proof of Lemma \ref{lemma:b_i=m}]
By definition \eqref{eq:f}, we obtain
\begin{align*}
&\frac{f(p,s,m,n',{\bm b}')}{f(p,s,m,n',{\bm b})}
=  p^{(n'+b_i)b_i-(n'+b'_i)b'_i}\cdot
\frac{ [p,b_{i-1}-b_i][p,b_i-b_{i+1}]}
{ [p,b_{i-1}-b'_i][p,b'_i-b_{i+1}]}\\
\le&\ p^{-1} \cdot
\frac{ [p,b_{i-1}-b_i][p,b_i-b_{i+1}]}
{ [p,b_{i-1}-b_i-1][p,b_i+1-b_{i+1}]}
= \ p^{-1} \cdot
\frac{ 1-p^{-(b_{i-1}-b_i)}}
{ 1-p^{-(b_i+1-b_{i+1})}}
< p^{-1}  \cdot \frac{1}{1-p^{-1} }
= \frac{1}{p-1}\le 1\,,
\end{align*}
as desired, where in the second last inequality, we used the condition that $b_i\ge b_{i+1}$ to get
$1-p^{-(b_i+1-b_{i+1})}\ge 1-p^{-1}\,.$
\end{proof}

\subsection{Monotonicity Properties and Limiting Behaviors}\label{sec:lim}
$ $

Now we free $p,s,m$ and $n'$. We will see that the monotonicity properties and limiting behaviors of $f$ of \eqref{eq:f} when ${\bm b}= {\bm 0}$ (i.e.,  the corresponding matrix $D_{\bm a}$ is full rank) differ tremendously from those when ${\bm b}\neq {\bm 0}$.
Specifically, we show that $f$ is increasing in $n',p$ and decreasing in $m$ when ${\bm b}= {\bm 0}$ (Theorem \ref{thm:b=0}), but decreasing in $n'$ and increasing in $m$ when ${\bm b}\neq {\bm 0}$ (Theorem \ref{thm:b!=0}).
Further, with regard to limiting behaviors, when ${\bm b}= {\bm 0}$, the limit of $f$ as $p,m$ or $n'\to \infty$ is positive (note that $f$ is independent of $s$)  (Theorem \ref{thm:b=0}); 
whereas when ${\bm b}\neq {\bm 0}$, the limit of $f$ is still positive as $m\to \infty$ or $s\to \infty$ with $\sum_{i=1}^s b_i$ bounded  (Theorems \ref{thm:non0lim}, \ref{thm:non0lim'}), 
but zero as $\max\left\{p,n',\sum_{i=1}^{s}b_i\right\}\to \infty$  (Theorems \ref{thm:lim0}, \ref{thm:lim0'}).
Lemma \ref{lemma:Cp} is crucial in the analysis of limiting behaviors of  $f$.

\subsubsection{\texorpdfstring{The Case of\, ${\bm b}= {\bm 0}$\,}{The Case of b=0}}
$ $ 

Let
\begin{equation}\label{eq:f0}
f_0(p,m,n'):=f(p,s,m,n',{\bm 0})=\frac{[p,n'+m]}{[p,n']}=\prod_{j=n'+1}^{n'+m}\left(1-p^{-j}\right).
\end{equation} 
We derive the following monotonicity properties and limiting behaviors of $f_0$ with the help of Lemma \ref{lemma:Cp}.

\begin{theorem}\label{thm:b=0}
The function $f_0(p,m,n')$ of \eqref{eq:f0}
is strictly increasing in $p,n'$ while strictly decreasing in $m$, and satisfies
$$ \lim_{m\to \infty} f_0(p,m,n')
=\frac{C_p}{[p,n']}<1\,, \quad 
\lim_{n'\to \infty} \inf_{m} f_0(p,m,n')=1  \quad  {\rm{and}}  \quad 
\lim_{p\to \infty} \inf_{m,n'} f_0(p,m,n')= 1 \,,$$
where $C_p=(1-p^{-1})(1-p^{-2})\cdots$ as defined in Lemma \ref{lemma:Cp}. In particular, we have
$$ \lim_{m\to \infty} f_0(p,m,0)
=C_p   \quad  {\rm{and}}  \quad 
\lim_{n'\to \infty}  f_0(p,m,n')=1 =
\lim_{p\to \infty}  f_0(p,m,n')\,;$$
the first equality characterizes $C_p$ as the limit of the probability that a random 
$m \times m$ integer matrix over $\mathbb{Z}/p^s\mathbb{Z}$ is nonsingular as $m\to \infty$\,.
\end{theorem}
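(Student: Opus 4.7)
The plan is to derive every assertion directly from the product representation $f_0(p,m,n')=\prod_{j=n'+1}^{n'+m}(1-p^{-j})$ together with Lemma \ref{lemma:Cp}. The monotonicity claims reduce to inspecting simple ratios; the limit claims reduce to the convergence $[p,k]\downarrow C_p$.

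For monotonicity, I would treat the three variables in turn. Each factor $1-p^{-j}$ lies in $(0,1)$ and is strictly increasing in $p$, so the finite product is strictly increasing in $p$. For $n'$, I would form the ratio
\[
\frac{f_0(p,m,n'+1)}{f_0(p,m,n')}=\frac{1-p^{-(n'+m+1)}}{1-p^{-(n'+1)}}>1,
\]
since $n'+m+1>n'+1$ makes the numerator larger than the denominator; both lie in $(0,1)$. For $m$, the ratio $f_0(p,m+1,n')/f_0(p,m,n')=1-p^{-(n'+m+1)}\in(0,1)$, giving strict decrease.

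For the limits, the fact that $f_0$ is decreasing in $m$ and increasing in $n'$ is what pins down the infima. First, Lemma \ref{lemma:Cp} gives $[p,k]\downarrow C_p$ as $k\to\infty$, so
\[
\lim_{m\to\infty} f_0(p,m,n')=\lim_{m\to\infty}\frac{[p,n'+m]}{[p,n']}=\frac{C_p}{[p,n']},
\]
which is strictly less than $1$ because $C_p<[p,n']$ for every finite $n'$. Since $f_0$ is decreasing in $m$, this limit equals $\inf_m f_0(p,m,n')$. Then
\[
\lim_{n'\to\infty}\inf_m f_0(p,m,n')=\lim_{n'\to\infty}\frac{C_p}{[p,n']}=\frac{C_p}{C_p}=1,
\]
again by Lemma \ref{lemma:Cp}. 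For the last assertion, using that $f_0$ is also increasing in $n'$, the joint infimum over $(m,n')$ is attained in the limit $n'=0$, $m\to\infty$:
\[
\inf_{m,n'}f_0(p,m,n')=\lim_{m\to\infty}f_0(p,m,0)=\frac{C_p}{[p,0]}=C_p,
\]
and $C_p\to 1$ as $p\to\infty$ by Lemma \ref{lemma:Cp} once more. The special cases $\lim_{m\to\infty}f_0(p,m,0)=C_p$ and $\lim_{n'\to\infty}f_0(p,m,n')=1=\lim_{p\to\infty}f_0(p,m,n')$ drop out of these calculations, the last two because the product is finite and each factor tends to $1$.

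I do not anticipate a real obstacle here: every step is a one-line manipulation once Lemma \ref{lemma:Cp} is in hand. The only mild care needed is to justify swapping $\inf$ and $\lim$ in the statement $\lim_{n'\to\infty}\inf_m f_0=1$ and in the analogous $p\to\infty$ statement, but this is immediate from the monotonicity already established (the infimum over a monotone family is its monotone limit), so no uniform convergence argument is required.
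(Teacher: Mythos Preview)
Your proposal is correct and follows essentially the same approach as the paper: both derive the monotonicities directly from the product formula $f_0(p,m,n')=\prod_{j=n'+1}^{n'+m}(1-p^{-j})$ and then obtain all the limits by invoking Lemma~\ref{lemma:Cp} to identify $\inf_m f_0=C_p/[p,n']$ and $\inf_{m,n'}f_0=C_p$. Your write-up simply spells out the ratio computations and the inf/limit identifications that the paper leaves implicit.
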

\begin{proof}
Utilizing the expression on the right-hand side of \eqref{eq:f0}, we obtain the monotonicities. 
Then we apply  Lemma \ref{lemma:Cp} to get
$$\inf_{m} f_0(p,m,n')
=\lim_{m\to \infty} f_0(p,m,n')
=\frac{C_p}{[p,n']}\to \frac{C_p}{C_p}=1 \quad {\rm{as}}\ \ n'\to\infty\,,$$
and
$$\inf_{m,n'} f_0(p,m,n')= \lim_{m\to \infty} f_0(p,m,0)
=C_p\to 1\quad {\rm{as}}\ \ p\to\infty\,.$$
\end{proof}

\subsubsection{\texorpdfstring{The Case of\, ${\bm b}\neq {\bm 0}\,$}{The Case of b!=0}}
$ $ 

We first present the monotonicity properties of $f(p,s,m,n',{\bm b})$ in $n'$ and $m$.

\begin{theorem}\label{thm:b!=0}
Suppose that ${\bm b}\neq {\bm 0}$\,. 
The function $f(p,s,m,n',{\bm b})$ is strictly decreasing in $n'$ while strictly increasing in $m$.
\end{theorem}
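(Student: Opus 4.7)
The plan is to verify both monotonicities by the same mechanism: form the quotient $f(p,s,m,n'{+}1,{\bm b})/f(p,s,m,n',{\bm b})$ (respectively $f(p,s,m{+}1,n',{\bm b})/f(p,s,m,n',{\bm b})$) using the explicit product formula \eqref{eq:f}, observe that incrementing $n'$ or $m$ by one only shifts a handful of the factors that appear, and show the resulting quotient is strictly below $1$ (respectively above $1$). This is much more direct than any combinatorial interpretation, and the hypothesis ${\bm b}\neq {\bm 0}$ will enter at precisely one point in each argument.

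First I would handle the $n'$ direction. Incrementing $n'$ multiplies the prefactor by $p^{-\sum_{i=1}^s b_i}$, extends the product $[p,n'+m]$ by the factor $1-p^{-(n'+m+1)}$, and extends the denominator factor $[p,n'+b_s]$ by $1-p^{-(n'+b_s+1)}$; every remaining factor in \eqref{eq:f} is independent of $n'$. Thus the quotient equals
\[
p^{-\sum_{i=1}^s b_i}\cdot \frac{1-p^{-(n'+m+1)}}{1-p^{-(n'+b_s+1)}}.
\]
Since ${\bm b}\neq{\bm 0}$ forces $\sum b_i\ge 1$, the first factor is at most $1/p$, while $m\ge b_s$ makes the second factor at least $1$. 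A short comparison then shows the product is strictly less than $1$.

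Next I would handle the $m$ direction. Raising $b_0=m$ to $m+1$ leaves the exponent of $p$ unchanged (it depends only on $b_1,\dots,b_s$) and only affects $[p,n'+m]$, $[p,m]$ and $[p,b_0-b_1]=[p,m-b_1]$, so the quotient is
\[
\frac{(1-p^{-(n'+m+1)})(1-p^{-(m+1)})}{1-p^{-(m+1-b_1)}}.
\]
Here ${\bm b}\neq {\bm 0}$ gives $b_1\ge 1$, so $1-p^{-(m+1-b_1)}\le 1-p^{-m}$, and expanding the numerator produces a dominant positive term $p^{-(m+1-b_1)}-p^{-(m+1)}\ge (p-1)\,p^{-(m+1)}$ that absorbs the negative contribution $-p^{-(n'+m+1)}$ from the product in the numerator.

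The only delicate point in either argument is the borderline case $p=2$ together with $n'=0$ and $b_s=0$ (for monotonicity in $n'$) or $b_1=1$ (for monotonicity in $m$), where the coarse bounds just described become equalities; in those subcases the strict inequality survives because of the small cross term $p^{-(n'+2m+2)}$ produced when one expands $(1-p^{-(n'+m+1)})(1-p^{-(m+1)})$, and this is the sole calculation that requires any care. Once those edge cases are checked, strict monotonicity in both variables follows at once.
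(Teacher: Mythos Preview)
Your approach---forming the quotients $f(\,\cdot\,,n'{+}1)/f(\,\cdot\,,n')$ and $f(\,\cdot\,,m{+}1)/f(\,\cdot\,,m)$ directly from the explicit formula \eqref{eq:f}---is exactly what the paper does, and your computation of both quotients is correct.

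One slip in the $n'$ direction: you observe that $m\ge b_s$ makes the fraction $\dfrac{1-p^{-(n'+m+1)}}{1-p^{-(n'+b_s+1)}}$ at least $1$, but that inequality points the wrong way for what you need. An \emph{upper} bound is required, and the paper simply uses numerator $<1$ and denominator $\ge 1-p^{-1}$ to get the fraction strictly below $\frac{1}{1-p^{-1}}$, whence the quotient is $<p^{-1}\cdot\frac{1}{1-p^{-1}}=\frac{1}{p-1}\le 1$. No cross term enters here; your remark about $p^{-(n'+2m+2)}$ applies only to the $m$ direction. For the $m$ direction your expansion argument is correct and essentially equivalent to the paper's chain $\frac{(1-p^{-(m+1)})^2}{1-p^{-m}}>\frac{1-2p^{-(m+1)}}{1-p^{-m}}\ge 1$.
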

\begin{proof}
Recall that $b_1\ge b_2\ge \cdots b_s\ge 0$\,. Since ${\bm b}\neq {\bm 0}$, we have $b_1\ge 1$\,. Thus the ratio
\begin{align*}
&\ \frac{f(p,s,m,n'+1,{\bm b})}{f(p,s,m,n',{\bm b})}
=p^{- \sum_{i=1}^{s}(n'+1+b_i)b_i+ \sum_{i=1}^{s}(n'+b_i)b_i}\cdot 
\frac{[p,n'+1+m][p,n'+b_s]}
{[p,n'+m][p,n'+1+b_s]}\\
=&\ p^{- \sum_{i=1}^{s} b_i} \cdot \frac{ 1-p^{-(n'+1+m)} }
{1-p^{-(n'+1+b_s)}}
< p^{-1}\cdot  \frac{1}{1-p^{-1}}=\frac{1}{p-1}\le 1 \,,
\end{align*}
and
\begin{align*}
&\ \frac{f(p,s,m+1,n',{\bm b})}{f(p,s,m,n',{\bm b})}
=\frac{[p,n'+m+1][p,m+1][p,m-b_1]}
{[p,n'+m][p,m][p,m+1-b_1]}\\
=&\ \frac{ \left( 1-p^{-(n'+1+m)} \right) \left( 1-p^{-(m+1)} \right) }
{1-p^{-(m+1-b_1)}}
\ge  \frac{\left( 1-p^{-(m+1)} \right)^2}{1-p^{-m}}
>\frac{1-2p^{-(m+1)}}{1-p^{-m}}
\ge 1 \,,
\end{align*}
as $p\ge 2$\,.
\end{proof}

Recall from definition \eqref{eq:f} that $f$ is the product of a power of $p$
\begin{equation*} 
f_1(p,s,m,n',{\bm b}):=p^{- \sum_{i=1}^{s}(n'+b_i)b_i}
\end{equation*}
and a fraction
\begin{equation} \label{eq:f2}
f_2(p,s,m,n',{\bm b}):=\frac{[p,n'+m][p,m]}
{[p,n'+b_s][p,b_s] \prod_{i=1}^{s} [p,b_{i-1}-b_i]}\,.
\end{equation}

When $s$ is fixed, thanks to Lemma \ref{lemma:Cp}, the function $f_2$ defined in \eqref{eq:f2} is bounded regardless of the values of other variables. Moreover, when $m$ (instead of $s$)  is fixed, this result also holds  since $\sum_{i=1}^{s} (b_{i-1}-b_i)=m-b_s\le m$ implies that 
$$\prod_{i=1}^{s} [p,b_{i-1}-b_i]
=\prod_{i=1}^{s}\prod_{j=1}^{b_{i-1}-b_i}\left(1-p^{-j} \right) 
\ge \left(1-p^{-1}\right) ^m\ge 2^{-m}\,.$$ 
These observations lead to the following zero limiting probabilities.

\begin{theorem}\label{thm:lim0}
We have
\begin{equation*}
\lim_{\max\,\left\{p,\,n',\, \sum_{i=1}^{s}b_i\right\}\to \infty,\,{\tiny {\bm b}}\neq{\tiny {\bm 0}}} \max_{m} \,f(p,s,m,n',{\bm b})=0\quad \text{when}\ s\  \text{is\ fixed}
\end{equation*}
and
\begin{equation*}
\lim_{\max\,\left\{p,\,n',\, \sum_{i=1}^{s}b_i\right\}\to \infty,\,{\tiny {\bm b}}\neq{\tiny {\bm 0}}} \,f(p,s,m,n',{\bm b})=0\quad \text{when}\ m\  \text{is\ fixed.}
\end{equation*}
\end{theorem}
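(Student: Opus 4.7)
The plan is to exploit the factorization $f=f_{1}\cdot f_{2}$ introduced just before the theorem and to separate the two behaviors: $f_{2}$ stays bounded under the limiting regime (as essentially noted by the authors in the paragraph preceding the statement), while $f_{1}$ carries all the decay. So I would first confirm a uniform bound on $f_{2}$ in each of the two cases, then prove that $f_{1}\to 0$, and finally multiply.

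For $f_{2}$, Lemma \ref{lemma:Cp} gives $[p,k]\in[C_{p},1]\subseteq[e^{-2},1]$ for every $k\ge 0$ and every prime $p$, so the two denominator factors $[p,n'+b_{s}]$ and $[p,b_{s}]$ are each $\ge e^{-2}$. The remaining denominator factor $\prod_{i=1}^{s}[p,b_{i-1}-b_{i}]$ requires different treatment in the two cases. When $s$ is fixed, each of the $s$ factors is $\ge e^{-2}$, giving $f_{2}\le e^{2s+4}$, a bound independent of $m,n',p,{\bm b}$; this is what lets us take $\max_{m}$ on the outside. When $m$ is fixed instead, the telescoping $\sum_{i=1}^{s}(b_{i-1}-b_{i})=m-b_{s}\le m$ together with $1-p^{-j}\ge 1-p^{-1}\ge 1/2$ yields $\prod_{i=1}^{s}[p,b_{i-1}-b_{i}]\ge(1-p^{-1})^{m}\ge 2^{-m}$, so $f_{2}\le e^{4}\cdot 2^{m}$, bounded by a constant depending only on $m$.

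For $f_{1}=p^{-\sum_{i=1}^{s}(n'+b_{i})b_{i}}$, the key arithmetic observation is that when ${\bm b}\ne{\bm 0}$ we have $b_{1}\ge 1$, hence $\sum_{i=1}^{s}b_{i}\ge 1$; combining with $b_{i}^{2}\ge b_{i}$ for nonnegative integers yields
$$\sum_{i=1}^{s}(n'+b_{i})b_{i}\;=\;n'\sum_{i=1}^{s}b_{i}+\sum_{i=1}^{s}b_{i}^{2}\;\ge\;n'+\sum_{i=1}^{s}b_{i}.$$
Hence $f_{1}\le p^{-(n'+\sum b_{i})}$, whose exponent is $\ge 1$. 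If $p\to\infty$ then $f_{1}\to 0$ immediately; if instead $p$ stays bounded but $\max\{n',\sum b_{i}\}\to\infty$, then $f_{1}\le 2^{-(n'+\sum b_{i})}\to 0$ because $p\ge 2$. Either way $f_{1}\to 0$ under our hypothesis, and multiplying by the respective bound on $f_{2}$ yields the two claimed limits.

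The hard part is really not hard here: the authors have already done the analytic work of bounding $f_{2}$ in the discussion preceding the theorem, so the only new content is the elementary inequality $\sum(n'+b_{i})b_{i}\ge n'+\sum b_{i}$ for ${\bm b}\ne{\bm 0}$. The one subtlety worth flagging is that the $m$-dependence in the bound $f_{2}\le e^{4}\cdot 2^{m}$ is genuinely necessary: it is precisely this loss of uniformity in $m$ that forces the second half of the theorem to be stated with $m$ fixed rather than with $\max_{m}$ on the outside as in the first half.
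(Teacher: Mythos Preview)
Your proof is correct and follows essentially the same route as the paper: factor $f=f_{1}f_{2}$, bound $f_{2}$ uniformly via Lemma~\ref{lemma:Cp} (when $s$ is fixed) or via the telescoping estimate $\prod_{i}[p,b_{i-1}-b_i]\ge(1-p^{-1})^{m}\ge 2^{-m}$ (when $m$ is fixed), and then force $f_{1}\to 0$. The only cosmetic difference is in the exponent analysis: the paper sandwiches $n'\sum b_i\le n'\sum b_i+\sum b_i^{2}\le n'\sum b_i+(\sum b_i)^{2}$ to argue the equivalence of the limiting conditions, whereas you use the slightly cleaner one-line bound $\sum_i(n'+b_i)b_i\ge n'+\sum_i b_i$ (valid since $\sum b_i\ge 1$ and $b_i^{2}\ge b_i$), which yields $f_{1}\le p^{-(n'+\sum b_i)}$ directly.
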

\begin{proof}
When $s$ or $m$ is fixed, we have shown that $f_2$ is bounded. On the other hand, we have
$$f_1(p,s,m,n',{\bm b})=p^{- \sum_{i=1}^{s}(n'+b_i)b_i}=p^{- n'\sum_{i=1}^{s}b_i-\sum_{i=1}^{s}b_i^2}\to 0$$ 
as long as ${\bm b}\neq {\bm 0}$ and 
\begin{equation}\label{eq:infty}
\max\left\{p,n'\sum_{i=1}^{s}b_i+\sum_{i=1}^{s}b_i^2\right\}\to \infty\,.
\end{equation} 
Noticing that $$n'\sum_{i=1}^{s}b_i\le n'\sum_{i=1}^{s}b_i+\sum_{i=1}^{s}b_i^2
\le n'\sum_{i=1}^{s}b_i+\left(\sum_{i=1}^{s}b_i\right)^2\,,$$
thus \eqref{eq:infty} is equivalent to $\max\left\{p,n',\sum_{i=1}^{s}b_i\right\}\to \infty$\,.
\end{proof}

\begin{remark}\label{rmk:r}
{\rm
Let $r\,(\le s)$ be the number of nonzeroes in $\{b_1,b_2,\dots,b_s\}$, i.e., $b_r>0=b_{r+1}$ 
(we define $b_{s+1}=0$), then $rb_r\,,b_1\le \sum_{i=1}^{s}b_i\le  rb_1$ due to the decreasing property of the $b_i$'s.
Hence $\sum_{i=1}^{s}b_i\to \infty$ if and only if $\max\,\{b_1,r\}\to \infty$ \,.
In particular, when $s$ is fixed, we have $\sum_{i=1}^{s}b_i\to \infty$ if and only if $b_1\to \infty$.}
\end{remark}

Moreover, if we free $s,m,n'$ but fix $p$ and let $\sum_{i=1}^{s}b_i\to \infty$\,, then $f$ also goes to $0$.

\begin{theorem}\label{thm:lim0'}
For a fixed prime $p$, we have
\begin{equation*}
\lim_{ \sum_{i=1}^{s}b_i \to \infty} \max_{m,n'} \,f(p,s,m,n',{\bm b})=0\,.
\end{equation*}
\end{theorem}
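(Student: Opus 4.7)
The plan is to reduce $\max_{m, n'} f(p, s, m, n', \bm b)$ to a closed form $L(\bm b)$ via Theorem~\ref{thm:b!=0}, and then bound $L(\bm b)$ by a product $p^{-(s-1)} \cdot (p^{-b_1^2} C_p^{-b_1})$ whose two factors decay separately in $s$ and $b_1$; since $S := \sum b_i \to \infty$ forces $\max(s, b_1) \to \infty$, this will give the result. I first eliminate trailing zeros: if $\bm b \neq \bm 0$ but $b_{s'+1} = \cdots = b_s = 0$ with $b_{s'} > 0$, then writing out definition~\eqref{eq:f} and using $[p, 0] = 1$ gives
$$f(p, s, m, n', \bm b) \;=\; \frac{[p, n' + b_{s'}]}{[p, n']} \, f(p, s', m, n', \bm b') \;\le\; f(p, s', m, n', \bm b'),$$
where $\bm b' := (b_1, \ldots, b_{s'})$; since $\sum b_i = \sum b'_i$, I may assume $b_s \ge 1$ henceforth.

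By Theorem~\ref{thm:b!=0}, $f$ is decreasing in $n'$ and increasing in $m$ for $\bm b \neq \bm 0$, so $\max_{m, n'} f = \lim_{m \to \infty} f(p, s, m, 0, \bm b)$. Splitting off the factor $[p, m - b_1]$ (the $i=1$ term in $\prod_{i=1}^s [p, b_{i-1} - b_i]$, where $b_0 = m$) and applying Lemma~\ref{lemma:Cp} to get $[p, m]^2 / [p, m - b_1] \to C_p$, the limit evaluates to
$$L(\bm b) \;:=\; \lim_{m\to\infty} f(p, s, m, 0, \bm b) \;=\; \frac{p^{-\sum_{i=1}^s b_i^2}\, C_p}{[p, b_s]^2 \prod_{i=2}^s [p, b_{i-1} - b_i]}.$$

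For the bound on $L(\bm b)$, the key observation is that $[p, 0] = 1$, so only the strict descents $b_{i-1} > b_i$ contribute nontrivial factors (each $\ge C_p$ by Lemma~\ref{lemma:Cp}) to $\prod_{i=2}^s [p, b_{i-1} - b_i]$; the number of such descents is at most $b_1 - b_s \le b_1 - 1$, yielding $\prod_{i=2}^s [p, b_{i-1} - b_i] \ge C_p^{b_1 - 1}$. Together with $[p, b_s]^2 \ge C_p^2$ and $\sum b_i^2 \ge b_1^2 + (s - 1)$ (using $b_i \ge 1$), this gives
$$L(\bm b) \;\le\; p^{-(s - 1)} \cdot \bigl(p^{-b_1^2} C_p^{-b_1}\bigr).$$
The factor $p^{-b_1^2} C_p^{-b_1}$ is bounded above by some $M_p < \infty$ (quadratic decay beats linear growth) and tends to $0$ as $b_1 \to \infty$. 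Since $S \to \infty$ with $b_s \ge 1$ forces $s \to \infty$ or $b_1 \to \infty$ (as $S \le s b_1$), we obtain $L \le M_p \, p^{-(s-1)} \to 0$ in the first case and $L \le p^{-b_1^2} C_p^{-b_1} \to 0$ in the second.

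The chief obstacle is producing a bound where the $s$-decay and $b_1$-decay are separated. A naive estimate $\prod_{i=2}^s [p, b_{i-1} - b_i] \ge C_p^{s-1}$ yields only $(p C_p)^{-s}$ up to constants, which fails when $p = 2$ since $2 C_2 < 1$. Exploiting $[p, 0] = 1$ to tie the number of nontrivial factors to $b_1$ rather than $s$ is what makes the argument work for every prime.
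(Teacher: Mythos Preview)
Your proof is correct and takes a genuinely different route from the paper's. The paper never computes $\sup_{m,n'} f$; instead it uses Lemma~\ref{lemma:b_i equal} to majorize $f(p,s,m,n',{\bm b})$ by $f$ evaluated at a two-level vector ${\bm b}'$---either $(b_1,b_s,\dots,b_s)$ when $b_1\to\infty$ or $(b_r,\dots,b_r,b_s,\dots,b_s)$ when $r\to\infty$---and then bounds that expression uniformly in $m,n'$ using only the crude estimate $[p,k]\ge e^{-2}$ from Lemma~\ref{lemma:Cp}. You instead invoke Theorem~\ref{thm:b!=0} to identify the supremum over $m,n'$ as the explicit limit $L({\bm b})$, and your descent-counting bound $\prod_{i=2}^{s}[p,b_{i-1}-b_i]\ge C_p^{\,b_1-1}$ is a clean alternative to the paper's strategy of first flattening ${\bm b}$. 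Both arguments hinge on avoiding the naive $C_p^{\,s}$ estimate, which (as you correctly flag) fails for $p=2$; the paper achieves this by collapsing ${\bm b}$ so that only a bounded number of $[p,\cdot]$ factors survive, while you tie the count of nontrivial factors to $b_1$ directly. Your approach is a bit more streamlined once Theorem~\ref{thm:b!=0} is in hand; the paper's is more elementary in that it does not need the monotonicity result. One cosmetic point: since $f$ is strictly increasing in $m$, the quantity you compute is $\sup_{m,n'} f$ rather than a genuine $\max$, but this matches the paper's own usage of $\max$ in the statement.
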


\begin{proof} 
Since $\sum_{i=1}^{s}b_i \to \infty$\,, we can assume that ${\bm b}\neq {\bm 0}$\,. Moreover, if $r\,(\le s)$ is the number of nonzeroes in $\{b_1,b_2,\dots,b_s\}$, then $\max\,\{b_1,r\}\to \infty$ (see Remark \ref{rmk:r}), which is equivalent to that $b_1\to \infty$ or $r\to \infty$ holds.

\smallskip
\noindent
{\it Case 1.} $b_1\to \infty$\,.

For any fixed $p,s,m$ and $n$, from Lemma \ref{lemma:b_i equal} we see that for ${\bm b}'=(b_1,b_s,b_s,\dots,b_s)$,
\begin{align*}
& f(p,s,m,n',{\bm b})\le f(p,s,m,n',{\bm b}')\\
=&\ p^{-(n'+b_1)b_1-(s-1)(n'+b_s)b_s}\cdot
\frac{[p,n'+m][p,m]}
{[p,n'+b_s][p,b_s][p,m-b_1][p,b_1-b_s]}
\le p^{-b_1}\cdot\frac{1}{(e^{-2})^4}
\end{align*}
on the strength of Lemma \ref{lemma:Cp}.
Hence
$$\max_{m,n'} \,f(p,s,m,n',{\bm b})\le p^{-b_1}\cdot\frac{1}{(e^{-2})^4} \to 0\,,\quad
\text{as}\ \ b_1\to \infty\,. $$

\noindent
{\it Case 2.} $r\to \infty$\,.

For any fixed $p,s,m$ and $n$, from Lemma \ref{lemma:b_i equal} we see that for ${\bm b}'=(b_r,b_r,\dots,b_r,b_s,b_s,\dots,b_s)$ (with $r$ $b_r$'s and $(s-r)$ $b_s$'s),
\begin{align*}
& f(p,s,m,n',{\bm b})\le f(p,s,m,n',{\bm b}')\\
=&\ p^{-r(n'+b_r)b_r-(s-r)(n'+b_s)b_s}\cdot
\frac{[p,n'+m][p,m]}
{[p,n'+b_s][p,b_s][p,m-b_r][p,b_r-b_s]}
\le p^{-r}\cdot\frac{1}{(e^{-2})^4}\,,
\end{align*}
on the strength of Lemma \ref{lemma:Cp}.
Hence
$$\max_{m,n'} \,f(p,s,m,n',{\bm b})\le p^{-r}\cdot\frac{1}{(e^{-2})^4} \to 0\,,\quad
\text{as}\ \ r\to \infty\,. $$
\end{proof}

All the limits of $f$ we have found so far equal  zero. 
To attain a nonzero limit, we must have a bounded
$\max\left\{p,n',\sum_{i=1}^{s}b_i\right\}$.
We may fix $p,s,n',{\bm b}$, let $m\to \infty$ and apply Lemma \ref{lemma:Cp}\,.
\begin{theorem}\label{thm:non0lim}
For fixed $p,s,n'$ and ${\bm b}\neq {\bm 0}$\,, we have 
\begin{equation*}
\lim_{m\to \infty} \,f(p,s,m,n',{\bm b})=p^{- \sum_{i=1}^{s}(n'+b_i)b_i}\cdot \frac{C_p}
{[p,n'+b_s][p,b_s] \prod_{i=2}^{s} [p,b_{i-1}-b_i]}\,.
\end{equation*}
\end{theorem}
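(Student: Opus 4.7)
The strategy is completely direct: inspect the formula \eqref{eq:f} for $f(p,s,m,n',\bm b)$ and identify which factors depend on $m$, then apply Lemma \ref{lemma:Cp} to those factors one by one. Since $p$, $s$, $n'$, and $\bm b$ are all fixed, the only $m$-dependent factors in
$$f(p,s,m,n',{\bm b})=p^{- \sum_{i=1}^{s}(n'+b_i)b_i}\cdot \frac{[p,n'+m][p,m]}{[p,n'+b_s][p,b_s] \prod_{i=1}^{s} [p,b_{i-1}-b_i]}$$
are $[p,n'+m]$, $[p,m]$, and the $i=1$ term of the denominator product, namely $[p,b_0-b_1]=[p,m-b_1]$. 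The prefactor $p^{-\sum_{i=1}^{s}(n'+b_i)b_i}$ and the factors $[p,n'+b_s]$, $[p,b_s]$, and $[p,b_{i-1}-b_i]$ for $i\ge 2$ are all independent of $m$.

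Next I would invoke Lemma \ref{lemma:Cp}, which guarantees $[p,k]\downarrow C_p$ as $k\to\infty$. Since $b_1$ is fixed, each of $n'+m$, $m$, and $m-b_1$ tends to $\infty$ as $m\to\infty$, so
$$\lim_{m\to\infty}[p,n'+m]=\lim_{m\to\infty}[p,m]=\lim_{m\to\infty}[p,m-b_1]=C_p.$$
Substituting these three limits into the formula, one factor of $C_p$ in the numerator cancels against the $[p,m-b_1]$ factor appearing in the denominator (after it is taken out of the product $\prod_{i=1}^s$), yielding
$$\lim_{m\to\infty} f(p,s,m,n',{\bm b})=p^{-\sum_{i=1}^{s}(n'+b_i)b_i}\cdot\frac{C_p}{[p,n'+b_s][p,b_s]\prod_{i=2}^{s}[p,b_{i-1}-b_i]},$$
as claimed.

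There is no real obstacle here; the only point that needs to be checked is that $b_1$ is indeed finite so that $m-b_1\to\infty$ (this is where the hypothesis that $\bm b$ is fixed enters), and that the finite product $\prod_{i=2}^s[p,b_{i-1}-b_i]$ is well-defined and bounded away from $0$ (which follows from Lemma \ref{lemma:Cp}, ensuring that dividing through is legitimate). Hence the limit passes through each factor individually and the computation is just bookkeeping.
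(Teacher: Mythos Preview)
Your proof is correct and follows exactly the approach the paper indicates: the paper's own proof consists of a single sentence, ``We may fix $p,s,n',{\bm b}$, let $m\to \infty$ and apply Lemma~\ref{lemma:Cp},'' and you have simply written out the details of that application. Your identification of the three $m$-dependent factors $[p,n'+m]$, $[p,m]$, and $[p,b_0-b_1]=[p,m-b_1]$ and the cancellation of one copy of $C_p$ is exactly what is needed.
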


We may also weaken the constraints by fixing $p,n'$ and $\sum_{i=1}^{s}b_i$ only.  A natural way to achieve this is to fix the first few $b_i$'s, say $b_1,b_2,\dots,b_r$ ($r< s$ fixed), and set the rest to be zero no matter how big $s$ is. 
According the definition \eqref{eq:f} of $f$, 
for ${\bm b}=(b_1,b_2,\dots,b_r,0,0,\dots,0)$,  we have
\begin{equation}\label{eq:r}
f(p,s,m,n',{\bm b})
=p^{-\sum_{i=1}^{r}(n'+b_i)b_i}\cdot
\frac{[p,n'+m][p,m]}
{[p,n'] \prod_{i=1}^{r+1} [p,b_{i-1}-b_i]}\,,
\end{equation}
which is independent of $s$.
Coupling with Theorem \ref{thm:lim0'} gives the following.

\begin{theorem}\label{thm:non0lim'}
When $m, n'$ and $p$ are fixed, for any given infinite integer sequence $\{b_0,b_1,\dots\}$ with $m=b_0\ge b_1\ge\cdots \ge b_i\ge b_{i+1}\ge\cdots \ge 0$\,, we have 
\begin{equation*}
\lim_{s\to \infty} \,f(p,s,m,n',{\bm b}^s)=
\begin{cases} 
0\,, & {\mathrm{if}}\ \sum_{i=1}^{\infty}b_i \to \infty \\ 
p^{-\sum_{i=1}^{r}(n'+b_i)b_i}\cdot
\frac{[p,n'+m][p,m]}
{[p,n'] \prod_{i=1}^{r+1} [p,b_{i-1}-b_i]}\,, & {\mathrm{otherwise}}
\end{cases},
\end{equation*}
where ${\bm b}^s:=(b_0,b_1,\dots,b_s)$ and  in the second case, $r$ is the number of nonzeroes in $\{b_0,b_1,\dots\}$ and finite (see Remark \ref{rmk:r}), and $b_{r+1}=0$\,.
\end{theorem}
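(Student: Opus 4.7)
The plan is to split on whether the infinite sum $\sum_{i=1}^{\infty} b_i$ converges or diverges, and handle each case by reducing to a result already established earlier in the paper. Neither case requires new ideas; the work is bookkeeping, and the main subtlety is just tracking which factors trivialize in the second case.

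For the first case, suppose $\sum_{i=1}^{\infty} b_i = \infty$. Since the partial sums $\sum_{i=1}^{s} b_i$ are non-decreasing in $s$ (as $b_i \ge 0$) and tend to $\sum_{i=1}^{\infty} b_i = \infty$, the hypothesis of Theorem~\ref{thm:lim0'} applies with our fixed prime $p$. Because $m$ and $n'$ are fixed,
\[
f(p,s,m,n',{\bm b}^s) \le \max_{m',n''} f(p,s,m',n'',{\bm b}^s) \longrightarrow 0
\]
as $s \to \infty$, giving the stated limit $0$. No additional work is needed here.

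For the second case, suppose $\sum_{i=1}^{\infty} b_i < \infty$. Since the $b_i$ are non-negative integers, only finitely many are nonzero; by Remark~\ref{rmk:r} (monotonicity of $\{b_i\}$), there is a finite $r$ such that $b_r > 0 = b_{r+1} = b_{r+2} = \cdots$. I claim that for every $s \ge r+1$ the value $f(p,s,m,n',{\bm b}^s)$ equals the constant expression on the right-hand side, which settles the limit immediately. To verify this, substitute $b_{r+1} = \cdots = b_s = 0$ into the defining formula \eqref{eq:f}: the exponent collapses to $\sum_{i=1}^{s}(n'+b_i)b_i = \sum_{i=1}^{r}(n'+b_i)b_i$; the factors $[p,n'+b_s] = [p,n']$ and $[p,b_s] = [p,0] = 1$ replace the boundary terms; and in $\prod_{i=1}^{s}[p,b_{i-1}-b_i]$ every factor with index $i \ge r+2$ is $[p,0]=1$, so the product truncates to $\prod_{i=1}^{r+1}[p,b_{i-1}-b_i]$. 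This is precisely formula \eqref{eq:r}, which is manifestly independent of $s$, so $\lim_{s\to\infty} f(p,s,m,n',{\bm b}^s)$ equals that constant, as claimed.

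The main (and essentially only) obstacle is the algebraic check in the second case that $f(p,s,m,n',{\bm b}^s)$ stabilizes to \eqref{eq:r} once $s > r$; since the paper already derived \eqref{eq:r} for vectors with trailing zeros, this reduces to observing $[p,0]=1$ at the appropriate slots. The first case is a direct invocation of Theorem~\ref{thm:lim0'}, once one notes that divergence of $\sum_{i=1}^\infty b_i$ forces partial sums to infinity. Hence the result follows essentially by combining Theorem~\ref{thm:lim0'} with the explicit evaluation \eqref{eq:r}.
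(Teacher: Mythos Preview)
Your proposal is correct and follows exactly the approach the paper intends: the paper derives \eqref{eq:r} to show $f$ stabilizes once the tail of ${\bm b}$ is zero, then says ``Coupling with Theorem~\ref{thm:lim0'} gives the following'' before stating Theorem~\ref{thm:non0lim'}. Your write-up simply makes the two cases and the bookkeeping explicit.
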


\bigskip

\end{document}